\newcommand{\C}{\mathbb{C}}
\newcommand{\Z}{\mathbb{Z}}
\newcommand{\ve}{\varepsilon}
\newcommand{\s}{\sigma}
\renewcommand{\d}{\delta}
\newcommand{\omegah}{\omega_{\d}}
\def\mychange#1{#1}
\newtheorem{theorem}{Theorem}[section]
\newtheorem{lemma}[theorem]{Lemma}
\newtheorem{remark}[theorem]{Remark}
\newtheorem{corollary}[theorem]{Corollary}
\newtheorem{definition}{Definition}
\newtheorem{proposition}[theorem]{Proposition}
\newtheorem{assumption}{Assumption}
\theoremstyle{definition}
\newenvironment{example}
  {\pushQED{\qed}\examplex}
  {\popQED\endexamplex}
\title{Twisted Cohomology and Likelihood Ideals}
\author{Saiei-Jaeyeong Matsubara-Heo and Simon Telen}
\date{}
\begin{document}

\maketitle

\begin{abstract}
\noindent A likelihood function on a smooth very affine variety gives rise to a twisted de Rham complex. We show how its top cohomology vector space degenerates to the coordinate ring of the critical points defined by the likelihood equations. We obtain a basis for cohomology from a basis of this coordinate ring. We investigate the dual picture, where twisted cycles correspond to critical points. We show how to expand a twisted cocycle in terms of a basis, and apply our methods to Feynman integrals from physics. 
\end{abstract}

\section{Introduction}
Very affine varieties are closed subvarieties of an algebraic torus. They have applications in algebraic statistics \cite{huh2014likelihood} and particle physics \cite{mizera2018scattering}. We study smooth such varieties given by hypersurface complements in the algebraic torus. Fix $\ell$ Laurent polynomials $f_1, \ldots, f_\ell \in \mathbb{C}[x_1^{\pm 1}, \ldots, x_n^{\pm 1}]$ in $n$ variables. Localizing at the product $f_1 \cdots f_\ell$ gives the very affine variety
\begin{equation} \label{eq:defX}
 X \, = \, \{ x \in (\mathbb{C}^*)^n \, : \, f_i(x) \neq 0 , \text{ for all } i\} \, = \, (\mathbb{C}^*)^n \setminus V(f_1 \cdots f_\ell).
\end{equation}
This is realized as a closed subvariety of $(\mathbb{C}^*)^{n + \ell}$ via $x \mapsto (x_1,\ldots, x_n,f_1(x)^{-1}, \ldots, f_\ell(x)^{-1})$.
In statistics and physics applications, the functions $f_i$ arise from a \emph{likelihood function} 
\begin{equation} \label{eq:likelihood}
 L(x) \, = \, f^{-s} \, x^\nu \, = \, f_1^{-s_1} \cdots f_\ell^{-s_\ell} x_1^{\nu_1} \cdots x_n^{\nu_n},
 \end{equation}
encountered as the integrand of a \emph{generalized Euler integral} \cite{agostini2022vector,sturmfels2021likelihood}. These are \emph{Bayesian integrals} in statistics, and \emph{Feynman integrals} in physics. 
Outside these applications, generalized Euler integrals are interesting objects in their own right.  
They represent hypergeometric functions and solutions to GKZ systems \cite{GKZ,matsubara2020euler}. Computations with these integrals can be done in a \emph{twisted cohomology} vector space $H^n(X,\omega)$ associated to $X$ and $L$ \cite{agostini2022vector}. 
This paper establishes a crucial relation between $H^n(X,\omega)$ and an ideal in the coordinate ring of $X$, called the \emph{likelihood ideal}.
It makes computations in $H^n(X,\omega)$ explicit, by showing how to compute a basis and how to find coefficients in this basis.

We think of the exponents $s, \nu$ in \eqref{eq:likelihood} as complex parameters, so the likelihood $L$ is multi-valued. The logarithm of $L$ is the \emph{log-likelihood function}, whose partial derivatives are single valued and well-defined on $X$. The complex critical points of the log-likelihood function are the solutions of $\omega(x) = 0$, where $\omega$ is the one-form 
\[ \omega(x) \, = \, {\rm dlog} L(x) \, = \, -s_1 \, {\rm dlog}f_1   - \cdots -s_\ell  \, {\rm dlog}f_\ell  + \frac{\nu_1\, {\rm d}x_1}{x_1} + \cdots +\frac{\nu_n\, {\rm d}x_n}{x_n}.  \]
Expanding $\omega = g_1 {\rm d}x_1 + \cdots + g_n {\rm d}x_n$ in the basis ${\rm d}x_1, \ldots, {\rm d}x_n$ gives $n$ equations $g_1 =  \cdots= g_n=0$ on $X$. The $g_i$ generate an ideal $I$ in the coordinate ring ${\cal O}(X)$ of $X$, called the \emph{likelihood ideal}. June Huh has shown that, for generic $s, \nu$, the likelihood ideal defines $|\chi(X)|$ critical points, with $\chi(X)$ the \emph{Euler characteristic} \cite{huh2013maximum}. This means that $\dim_{\mathbb{C}} {\cal O}(X)/I = |\chi(X)|$.

The Euler characteristic also counts the dimension of the \emph{twisted cohomology} $H^n(X,\omega)$ of $X$ \cite{agostini2022vector}. We briefly recall the definition.
The form $\omega$ is regular on $X$, in the algebraic sense. We write $\omega \in \Omega^1(X)$. More generally, $\Omega^k(X)$ denotes the regular $k$-forms on $X$. Our vector space $H^n(X,\omega)$ is the $n$-th cohomology of the \emph{twisted de Rham complex} $0 \rightarrow \Omega^0(X)\rightarrow \Omega^1(X) \rightarrow \cdots \rightarrow \Omega^n(X) \rightarrow 0$, where the differential is ${\rm d} + \omega \wedge$. In symbols: 
\[ H^n(X,\omega) \, = \, \Omega^n(X)\, / \,  ({\rm d + \omega \wedge})(\Omega^{n-1}(X)).\]
Using the identification $\Omega^n(X) \simeq {\cal O}(X)$ from \eqref{eq:cohomA}, we write this alternatively as $H^n(X,\omega) = {\cal O}(X)/V$, where $V \subset {\cal O}(X)$ is a vector space which is \emph{not} an ideal. We will sometimes write $V(\omega)$ to emphasize the dependence of $V$ on the twist $\omega$. For an element $g \in {\cal O}(X)$, we write $[g]_I$ for its residue class in ${\cal O}(X)/I$, and $[g]_V$ its residue class in ${\cal O}(X)/V$.
In the physics literature, a basis of ${\cal O}(X)/V$ (or, the corresponding set of Feynman integrals) is called a set of \emph{master integrals} \cite{henn2013multiloop}. It is an important computational problem to find such a basis.

For some of our purposes, it will be convenient to keep $s$ and $\nu$ as parameters, rather than fixing complex values. We use the notation $X_K$ for our very affine variety, but now defined over the field $K = \mathbb{C}(s,\nu)$ of rational functions in $s$ and $\nu$. The cohomology module of the twisted de Rham complex is the $K$-vector space ${\cal O}(X_K)/V_K = H^n(X_K,\omega)$. Here $V_K \subset {\cal O}(X_K)$ is the image of ${\rm d} + \omega \wedge$ in the complex $(\Omega^\bullet(X_K),{\rm d} + \omega \wedge)$ over $K$, see Section \ref{sec:2}.

Our first main result establishes an explicit description of $H^n(X_K,\omega)$ as the quotient of a non-commutative ring of difference operators by a left ideal. Let $R$ be the ring of difference operators in $s$ and $\nu$, with coefficients in $K$. Its precise definition is given around Equation \eqref{eq:commutators}.
The following is a simplified version of Theorem \ref{thm:RmodJ}.
\begin{theorem} \label{thm:0intro}
The cohomology $H^n(X_K,\omega) = {\cal O}(X_K)/V_K$ is isomorphic, as a left $R$-module, to the quotient $R/J$, where $J \subset R$ is a left ideal generated by $n + \ell$ difference~operators.
\end{theorem}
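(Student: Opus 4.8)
The plan is to exhibit $H^n(X_K,\omega)$ as a cyclic left $R$-module generated by the class of the distinguished top form, and then to pin down its annihilator.

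\textbf{The module structure and a cyclic generator.} For a unit $h\in\mathcal{O}(X_K)$ one has $({\rm d}+\omega\wedge)(h\,\varphi)=h\cdot({\rm d}+(\omega+{\rm dlog}\,h)\wedge)\varphi$, so multiplication by $f_i^{-1}$ is a morphism of complexes from $(\Omega^\bullet(X_K),{\rm d}+\omega\wedge)$ to the complex twisted by $\omega-{\rm dlog}\,f_i=\omega|_{s_i\mapsto s_i+1}$, and multiplication by $x_j$ is a morphism to the complex twisted by $\omega+{\rm dlog}\,x_j=\omega|_{\nu_j\mapsto\nu_j+1}$. Since these target twists differ from $\omega$ only by a substitution in the parameters, their cohomologies are identified with $H^n(X_K,\omega)$ up to the corresponding $K$-semilinear change of scalars; composing, we let the shift $\sigma_{s_i}\in R$ act on $H^n(X_K,\omega)$ by ``multiply the representative by $f_i^{-1}$ and shift $s_i$'', and $\sigma_{\nu_j}\in R$ by ``multiply by $x_j$ and shift $\nu_j$'' (inverse shifts acting by $f_i$, resp.\ $x_j^{-1}$). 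A direct check against \eqref{eq:commutators} shows these operators, together with multiplication by $K$, make $H^n(X_K,\omega)$ a left $R$-module. Using $\Omega^n(X_K)\simeq\mathcal{O}(X_K)$ from \eqref{eq:cohomA}, write the distinguished top form as $1$. Since $\mathcal{O}(X_K)=K[x_1^{\pm},\dots,x_n^{\pm}][f_1^{-1},\dots,f_\ell^{-1}]$ is spanned over $K$ by the elements $x^a f^{-b}$ with $a\in\mathbb{Z}^n$, $b\in\mathbb{Z}_{\ge 0}^\ell$, and $x^a f^{-b}=\sigma_\nu^a\sigma_s^b\cdot 1$ up to a parameter shift (with $\sigma_\nu^a:=\prod_j\sigma_{\nu_j}^{a_j}$, $\sigma_s^b:=\prod_i\sigma_{s_i}^{b_i}$), the class $[1]_V$ generates $H^n(X_K,\omega)$. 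Hence $H^n(X_K,\omega)\cong R/J$ with $J=\{P\in R: P\cdot[1]_V=0\}$, and it remains to produce $n+\ell$ generators of $J$.

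\textbf{The $n+\ell$ operators.} Writing $f_i=\sum_c d^{(i)}_c x^c$, the identity $f_i\cdot f_i^{-1}=1$ in $\mathcal{O}(X_K)$ gives $\sum_c d^{(i)}_c[x^c f_i^{-1}]_V=[1]_V$, i.e.\ $P^{(2)}_i\cdot[1]_V=0$ with
\[ P^{(2)}_i\;=\;\Big(\textstyle\sum_c d^{(i)}_c\,\sigma_\nu^c\Big)\sigma_{s_i}-1\ \in R,\qquad i=1,\dots,\ell. \]
For the remaining $n$ operators, recall $\omega=\sum_j g_j\,{\rm dlog}\,x_j$ with $g_j=\nu_j-\sum_i s_i\,(x_j\partial_{x_j}f_i)/f_i$, and that applying ${\rm d}+\omega\wedge$ to the closed $(n-1)$-form $(-1)^{j-1}\,{\rm dlog}\,x_1\wedge\cdots\wedge{\rm dlog}\,x_{j-1}\wedge{\rm dlog}\,x_{j+1}\wedge\cdots\wedge{\rm dlog}\,x_n$ yields $g_j$ times the top form; hence $[g_j]_V=0$. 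Expanding $x_j\partial_{x_j}f_i=\sum_c c^{(i,j)}_c x^c$ and moving the coefficients and $s_i$ past the shifts via \eqref{eq:commutators}, this becomes $P^{(1)}_j\cdot[1]_V=0$ for an operator $P^{(1)}_j\in R$ of the shape $\nu_j-\sum_i \big(\sum_c c^{(i,j)}_c\sigma_\nu^c\big)\sigma_{s_i}\,s_i$. Thus $J':=\big(P^{(1)}_1,\dots,P^{(1)}_n,P^{(2)}_1,\dots,P^{(2)}_\ell\big)\subseteq J$, i.e.\ there is a surjection of left $R$-modules $R/J'\twoheadrightarrow R/J=H^n(X_K,\omega)$ sending $1\mapsto[1]_V$.

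\textbf{Equality $J=J'$ --- the main point.} It suffices to prove $\dim_K R/J'\le\dim_K H^n(X_K,\omega)$: the latter equals $|\chi(X)|$ \cite{agostini2022vector}, so together with the surjection this forces an isomorphism, hence $J=J'$. We bound $\dim_K R/J'$ by a Gröbner-type degeneration of $J'$ onto the likelihood ideal. Choosing a filtration on $R$ under which $\mathrm{gr}\,R$ is a commutative (localized polynomial) ring, and under the identification $\sigma_{\nu}^{e_j}\leftrightarrow x_j$, $\sigma_{s_i}\leftrightarrow f_i^{-1}$, the symbol of $P^{(2)}_i$ becomes the defining relation $f_i\cdot f_i^{-1}=1$ of $\mathcal{O}(X)$, while the symbol of $P^{(1)}_j$ becomes the likelihood generator $g_j=\nu_j-\sum_i s_i\,(x_j\partial_{x_j}f_i)/f_i$. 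Consequently $\mathrm{gr}\,(R/J')$ is a quotient of (a base change of) $\mathcal{O}(X)/I$, giving $\dim_K R/J'\le\dim\,(\mathcal{O}(X)/I)=|\chi(X)|$ by Huh's theorem \cite{huh2013maximum}. Getting this degeneration right is the principal obstacle: one must design the filtration so that $\mathrm{gr}\,R$ is genuinely commutative, verify that $\mathrm{gr}\,J'$ contains (the extension of) $I$, and justify the Hilbert-function comparison over $K=\mathbb{C}(s,\nu)$. With this upper bound in hand, equality with $\dim_K H^n=|\chi(X)|$ completes the proof.
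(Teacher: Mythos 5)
Your route is genuinely different from the paper's. The paper realizes $H^n(X_K,\omega)$ as the algebraic Mellin transform (after extension to $K$) of the local cohomology $D$-module $M=D_{\ell+n,\mathbb{C}}/H$ of $X\hookrightarrow T^{\ell+n}_{\mathbb{C}}$, and invokes a theorem of Loeser--Sabbah to identify this Mellin transform with the $D$-module push-forward to a point, which is $H^n(X_K,\omega)$; the $n+\ell$ generators of $J$ then drop out as the Mellin images of the defining operators of $H$. You instead take a direct approach: realize $H^n(X_K,\omega)$ as $R/\mathrm{Ann}_R[1]_V$ via the cyclic generator $[1]_V$, exhibit $n+\ell$ explicit elements of the annihilator, and close the gap by comparing dimensions. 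This buys elementarity (no $D$-module background), at the price of a nontrivial dimension bound.

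Steps one through three are essentially sound, modulo an ordering slip in $P^{(1)}_j$: the multiplier $s_i$ must sit to the \emph{left} of $\sigma_{s_i}$; as written one gets $P^{(1)}_j\bullet[1]_V=\nu_j-\sum_i(s_i+1)\,(x_j\partial_{x_j}f_i)\,f_i^{-1}\ne g_j$. After that fix, $J'$ equals the ideal generated by \eqref{eqn:2} (your operators are units times the paper's generators). The genuine gap is the one you flag yourself, namely $\dim_KR/J'\le|\chi(X)|$. Your sketch of ``a filtration on $R$ with commutative $\mathrm{gr}\,R$'' does not work as stated, because $K=\mathbb{C}(s,\nu)$ already contains $s,\nu$ as units, so a degree filtration in $s,\nu$ is unavailable. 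The correct framework is the one the paper sets up in Section \ref{sec:3}: introduce a formal parameter $\d$, work in $R_\d$ over $K[\![\d]\!]$ with $[\sigma_{s_i},s_j]=\delta_{ij}\,\d\,\sigma_{s_j}$, so that $R_0=\mathcal{O}(X_K)$ and $\lim_{\d\to 0}R_\d/J'_\d=\mathcal{O}(X_K)/I_K$. Then, after the rescaling $(s,\nu)\mapsto(s/\d,\nu/\d)$, one has $\dim_KR/J'=\dim_{K(\!(\d)\!)}\bigl(K(\!(\d)\!)\otimes_{K[\![\d]\!]}R_\d/J'_\d\bigr)$, and semicontinuity (Lemma \ref{lem:commalg}) bounds this by $\chi$ once $R_\d/J'_\d$ is known to be finitely generated over $K[\![\d]\!]$. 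That finite generation is deduced in the proof of Theorem \ref{thm:RmodJhbar} from $\dim_K\lim_{\d\to 0}(R_\d/J'_\d)=\chi<\infty$ together with a separatedness assertion $\bigcap_n\d^n(R_\d/J'_\d)=0$, which is stated there without proof. Your argument would not be circular — this chain does not use Theorem \ref{thm:RmodJ} itself — but it does rest on that separatedness claim, and supplying a self-contained proof of it (or another route to finite generation of $R_\d/J'_\d$) is the concrete hole you would need to fill.
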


The generators of $J$ are given explicitly in Theorem \ref{thm:RmodJ}. Loosely speaking, $J$ is a non-commutative variant of the likelihood ideal mentioned above. That ideal is here reinterpreted as an ideal $I_K$ in ${\cal O}(X_K)$. The similarity between $J$ and $I_K$ gives some intuition behind the next result, which says that bases of $H^n(X_K,\omega)$ can be found from bases of ${\cal O}(X_K)/I_K$.

\begin{theorem} \label{thm:2intro}
The $K$-vector spaces ${\cal O}(X_K)/V_K$ and ${\cal O}(X_K)/I_K$ have dimension $|\chi(X)|$. 
If $\{\beta_1,\dots,\beta_\chi\}\subset{\cal O}(X_K)$ represents a constant basis of ${\cal O}(X_K)/I_K$, in the sense of Definition \ref{def:constant}, then $\{[\beta_1]_{V_K}, \ldots, [\beta_\chi]_{V_K}\}$ is a $K$-basis of ${\cal O}(X_K)/V_K$.
\end{theorem}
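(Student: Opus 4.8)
The plan is to prove both assertions by degenerating the twisted connection $\nabla_\omega={\rm d}+\omega\wedge$ to a map whose leading part is the likelihood ideal, and then exploiting that $g_1,\dots,g_n$ is a regular sequence. First the dimension count: $\dim_K{\cal O}(X_K)/I_K=|\chi(X)|$ is Huh's theorem \cite{huh2013maximum} read at the generic point $K=\mathbb C(s,\nu)$ of parameter space — by generic flatness of ${\cal O}(X)[s,\nu]/(g_1,\dots,g_n)$ over $\mathbb C[s,\nu]$, the $K$-dimension of the quotient equals the number of critical points for generic complex $s,\nu$, which Huh shows is $|\chi(X)|$ — while $\dim_K{\cal O}(X_K)/V_K=|\chi(X)|$ is the cohomology-concentration statement of \cite{agostini2022vector}. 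Since $\{[\beta_1]_{V_K},\dots,[\beta_\chi]_{V_K}\}$ has exactly $|\chi(X)|=\dim_K{\cal O}(X_K)/V_K$ elements, it then suffices to prove that these residues are $K$-linearly independent.

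For that I would pass to the graded ring $A:={\cal O}(X)\otimes_{\mathbb C}\mathbb C[s,\nu]$, graded by total degree in $(s,\nu)$, so that ${\cal O}(X_K)=A\otimes_{\mathbb C[s,\nu]}K$ and $\omega$ is homogeneous of degree $1$. In the logarithmic frame ${\rm dlog}\,x_1,\dots,{\rm dlog}\,x_n$, writing $\eta=\sum_i(-1)^{i-1}h_i\,{\rm dlog}\,x_1\wedge\cdots\wedge\widehat{{\rm dlog}\,x_i}\wedge\cdots\wedge{\rm dlog}\,x_n$, one gets $\nabla_\omega\eta\simeq\sum_i(x_i\partial_i+x_ig_i)h_i$ under $\Omega^n(X_A)\simeq A$: the ${\rm d}$-part preserves the $(s,\nu)$-grading and $\omega\wedge$ raises it by $1$, with image $I_A:=(g_1,\dots,g_n)\subset A$. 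The key structural fact is that $g_1,\dots,g_n$ is a regular sequence in $A$, and this needs no genericity: since ${\rm dlog}\,x_i$ enters $\omega$ with coefficient $\nu_i$, the matrix $(\partial g_i/\partial\nu_j)$ equals ${\rm diag}(x_1^{-1},\dots,x_n^{-1})$, so at every $x\in X$ the linear forms $g_1(x,\cdot),\dots,g_n(x,\cdot)$ in $(s,\nu)$ are independent; hence all fibers of $V(g_1,\dots,g_n)\subset X\times\mathbb A^{n+\ell}$ over $X$ have dimension $\ell$, so $\dim V(g_1,\dots,g_n)=n+\ell=\dim A-n$, and since $A$ is regular (hence Cohen--Macaulay) the sequence is regular. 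Therefore the Koszul complex of $(g_1,\dots,g_n)$ over $A$ resolves $A/I_A$ and, being a complex of graded $A$-modules, yields in each degree the exactness $\ker(\omega\wedge\colon\Omega^{n-1}(X_A)\to\Omega^n(X_A))=\omega\wedge\,\Omega^{n-2}(X_A)$.

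Now suppose $\sum_ic_i\beta_i\in V_K$ with $c_i\in K$. Clearing denominators and using that $\nabla_\omega$ is $\mathbb C[s,\nu]$-linear, I may take $c_i\in\mathbb C[s,\nu]$ and $\sum_ic_i\beta_i=\nabla_\omega\eta$ with $\eta\in\Omega^{n-1}(X_A)$; set $D:=\max_i\deg c_i$ and $N:=\deg\eta$, degrees in $(s,\nu)$. As long as $N\ge D$, the degree-$(N{+}1)$ part of $\sum_ic_i\beta_i=\nabla_\omega\eta$ reads $0=\omega\wedge\eta^{[N]}$, so the graded Koszul exactness gives $\eta^{[N]}=\omega\wedge\zeta$ with $\zeta$ homogeneous of degree $N-1$; replacing $\eta$ by $\eta-\nabla_\omega\zeta$ leaves $\nabla_\omega\eta$ unchanged and strictly lowers $N$. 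Iterating, I reach $N<D$, whereupon the degree-$D$ part of $\nabla_\omega\eta$ lies in $I_A$ (only the $\omega\wedge$ contribution can reach degree $D$), so $\sum_ic_i^{[D]}\beta_i\in I_A\subseteq I_K$. The $\beta_i$ may be chosen in ${\cal O}(X)$ — this is the content of Definition \ref{def:constant} — and are then $\mathbb C$-linearly independent in ${\cal O}(X)$ (a $\mathbb C$-relation among them would descend to one among the $[\beta_i]_{I_K}$), hence $\mathbb C[s,\nu]$-linearly independent in $A$; so $\sum_ic_i^{[D]}[\beta_i]_{I_K}=0$ forces $c_i^{[D]}=0$ for all $i$, i.e. $\deg c_i<D$ for all $i$, which contradicts the choice of $D$ unless every $c_i=0$. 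This yields the linear independence, and with the dimension count, the theorem.

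The step I expect to be the genuine obstacle is the graded Koszul reduction in the third paragraph: it requires the likelihood ideal to be a complete intersection over the \emph{whole} parameter ring $A$, not just over $K$, and it is precisely the monomial factor $x^\nu$ of the likelihood — the torus directions ${\rm d}x_i/x_i$ in $\omega$ — that makes this hold for free; on a general very affine variety one would need $s$ generic and a finer argument. A lesser, bookkeeping point is checking that Definition \ref{def:constant} supplies exactly the feature used, namely parameter-free representatives so that $(\sum_ic_i\beta_i)^{[D]}=\sum_ic_i^{[D]}\beta_i$. Alternatively, the same degeneration can be run inside the presentation $H^n(X_K,\omega)\cong R/J$ of Theorem \ref{thm:RmodJ}: filtering $R$ by the order of the shift operators, the symbols of the $n+\ell$ generators of $J$ should generate the likelihood ideal (extended along $X\hookrightarrow(\mathbb C^*)^{n+\ell}$), and a Gröbner-basis argument in that filtered algebra produces the same basis, trading the homological step for a leading-term computation.
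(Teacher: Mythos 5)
Your proof is correct, but it proceeds along a genuinely different route from the paper's. The paper introduces a power series parameter $\delta$, works with $H^n(X_\delta,\omega_\delta)$ over $K[\![\delta]\!]$, and runs Nakayama's lemma against the $\delta\to 0$ limit (Lemma \ref{lem:limithomol}), then transfers from $K(\!(\delta)\!)$ back to $K$ via the substitution $(s,\nu)\mapsto(s/\delta,\nu/\delta)$ and a surjection of equidimensional vector spaces (the map $\varphi$ in the proof). You avoid all of this by grading $A={\cal O}(X)[s,\nu]$ by total $(s,\nu)$-degree, observing that $\omega\wedge$ is the Koszul differential of a regular sequence $g_1,\ldots,g_n$ in $A$ (your argument that the matrix $\partial g_i/\partial\nu_j={\rm diag}(x_1^{-1},\ldots,x_n^{-1})$ forces codimension $n$, and hence regularity since $A$ is Cohen--Macaulay, is sound and holds over the whole parameter ring, not just at the generic point), and then running a descending induction on the top $(s,\nu)$-degree of $\eta$ in a putative relation $\sum c_i\beta_i=\nabla_\omega\eta$, killing the top homogeneous part of $\eta$ at each step via graded Koszul exactness until the leading coefficients $c_i^{[D]}$ are seen to vanish modulo $I_K$. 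Both proofs locate the ``constant'' hypothesis of Definition \ref{def:constant} in exactly the right place: the paper needs it so that $\beta_i(s/\delta,\nu/\delta)=\beta_i$ makes $[\beta_i]$ land in the image of $\varphi$, while you need it so that $(\sum c_i\beta_i)^{[D]}=\sum c_i^{[D]}\beta_i$. The trade-off is as you suspect: the paper's $\delta$-machinery is reused throughout Sections \ref{sec:3}--\ref{sec:5} (e.g.\ Corollary \ref{cor:free_basis}, Theorem \ref{thm:RmodJhbar}, the degeneration of contiguity matrices), so the overhead is amortized; your argument is more self-contained and elementary, at the cost of redoing the filtration analysis inside $R$ separately if one later wants the analogous statements for the difference module. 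One small exposition slip: in your penultimate step, the conclusion $c_i^{[D]}=0$ follows directly from the $K$-linear independence of the $[\beta_i]_{I_K}$ (since $c_i^{[D]}\in\mathbb{C}[s,\nu]\subset K$); the detour through $\mathbb{C}$-linear independence of the $\beta_i$ in ${\cal O}(X)$ is unnecessary and slightly muddles which hypothesis is doing the work.
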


Theorem \ref{thm:2intro} has an analog over $\C$, though one needs to be careful when specializing $s$ and $\nu$.
Computing a basis for ${\cal O}(X)/I$ can be done by computing the critical points numerically (Algorithm \ref{alg:basis}). This is a task of \emph{numerical nonlinear algebra} \cite[Section 5]{agostini2022vector}. 

\begin{theorem} \label{thm:1intro}
Let $(s, \nu) \in \mathbb{C}^{\ell + n}$ be generic complex parameters in the sense of Assumption \ref{assum:lefschetz} and let $\{[\beta_1]_I, \ldots, [\beta_\chi]_I\}$ be a basis for ${\cal O}(X)/I$. 
The set $\{[\beta_1]_{V(\omega/\d)}, \ldots, [\beta_\chi]_{V(\omega/\d)}\}$ is a basis for $H^n(X,\omega/\d) = {\cal O}(X)/V(\omega/\d)$, for almost all $\d \in \C \setminus \{0\}$.
\end{theorem}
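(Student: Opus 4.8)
The plan is to deduce Theorem \ref{thm:1intro} from its "generic-parameters-over-$K$" analog, Theorem \ref{thm:2intro}, by a specialization argument in the scaling parameter $\delta$. Introduce the rescaled twist $\omega/\delta$, whose critical-point equations are $(1/\delta)\,g_i = 0$ on $X$; these cut out the \emph{same} ideal $I$ as $\omega$ itself (the scalar $1/\delta$ is a unit in ${\cal O}(X)$), so the likelihood ideal is independent of $\delta$, and $\dim_{\mathbb C}{\cal O}(X)/I = |\chi(X)|$ by Huh's theorem for generic $(s,\nu)$. On the other hand $V(\omega/\delta)$, the image of ${\rm d} + (\omega/\delta)\wedge$ on $\Omega^{n-1}(X)$, does depend on $\delta$, and $\dim_{\mathbb C}{\cal O}(X)/V(\omega/\delta) = |\chi(X)|$ for generic $(s,\nu)$ as well. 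So both quotients have the correct dimension $\chi := |\chi(X)|$; the content of the theorem is that the images $[\beta_1]_{V(\omega/\delta)},\dots,[\beta_\chi]_{V(\omega/\delta)}$ of a fixed $I$-basis remain linearly independent for almost all $\delta$.

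**The key algebraic step.**

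First I would reduce to a rank statement about a matrix. Fix a finite-dimensional subspace $W\subset{\cal O}(X)$ large enough to contain all $\beta_i$ and to surject onto ${\cal O}(X)/V(\omega/\delta)$ for generic $\delta$; concretely one can take $W$ spanned by a monomial basis of a sufficiently high truncation, together with the relevant derivatives. The map sending $g\mapsto [g]_{V(\omega/\delta)}$ is then represented, relative to a fixed spanning set, by a matrix $M(\delta)$ whose entries are \emph{Laurent polynomials in $\delta$} — indeed, writing out $({\rm d} + (\omega/\delta)\wedge)\eta$ for $\eta$ ranging over a basis of $(n-1)$-forms, each coefficient is of the form $a + b/\delta$ with $a,b\in K$, so clearing the single denominator $\delta$ makes everything polynomial in $\delta$. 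The statement "$\{[\beta_i]_{V(\omega/\delta)}\}$ is a basis" becomes "a certain maximal minor of a $\delta$-dependent matrix is nonzero," i.e. the non-vanishing of one polynomial $p(\delta)\in\mathbb C[\delta,\delta^{-1}]$. Such a polynomial has finitely many roots unless it is identically zero, so it suffices to exhibit \emph{one} value of $\delta$ at which the conclusion holds.

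**Producing a good value of $\delta$.**

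Here is where Theorem \ref{thm:2intro} enters. Over $K = \mathbb C(s,\nu)$, a constant basis $\{\beta_1,\dots,\beta_\chi\}$ of ${\cal O}(X_K)/I_K$ maps to a $K$-basis of ${\cal O}(X_K)/V_K$ — where $V_K$ is the image of ${\rm d} + \omega\wedge$ (the case $\delta = 1$). I would transport this through the rescaling: replacing $\omega$ by $\omega/\delta$ over $K$ amounts to the substitution $(s,\nu)\mapsto (s/\delta,\nu/\delta)$, which is an automorphism of $K$ fixing each $\beta_i$ (by the "constant" hypothesis of Definition \ref{def:constant}, the $\beta_i$ involve no parameters) and carrying $I_K$ to itself and $V_K$ to $V(\omega/\delta)_K$; hence $\{[\beta_i]\}$ is also a $K$-basis of ${\cal O}(X_K)/V(\omega/\delta)_K$ for the symbolic parameter, and equivalently $\{[\beta_i]_{V_K}\}$ is a $K$-basis of ${\cal O}(X_K)/V_K$. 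Now specialize the symbolic $\delta$: the relevant minor $p(\delta)$, viewed with symbolic $(s,\nu)$, is a nonzero element of $K[\delta,\delta^{-1}]$; since $(s,\nu)$ are generic by Assumption \ref{assum:lefschetz}, $p$ stays nonzero as an element of $\mathbb C[\delta,\delta^{-1}]$ after specializing $(s,\nu)$, and therefore vanishes only on a finite subset of $\mathbb C\setminus\{0\}$. That finite set is the exceptional locus, giving the "almost all $\delta$" conclusion.

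**The main obstacle.**

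The routine parts are the matrix setup and the "nonzero polynomial has finitely many roots" argument. The genuine subtlety — the step I expect to require the most care — is the \emph{interchange of specializations}: one must ensure that for generic $(s,\nu)$ the bad locus in $\delta$ is still finite, i.e. that the minor $p(\delta)\in K[\delta^{\pm 1}]$ does not become identically zero after substituting generic complex numbers for $(s,\nu)$, and simultaneously that the finite-dimensional space $W$ can be chosen uniformly (independently of $\delta$) so that it genuinely surjects onto ${\cal O}(X)/V(\omega/\delta)$ for the values of $\delta$ under consideration. This is exactly the "one needs to be careful when specializing $s$ and $\nu$" caveat flagged after Theorem \ref{thm:2intro}; handling it cleanly means proving the dimension count $\dim{\cal O}(X)/V(\omega/\delta) = \chi$ is \emph{simultaneously} valid for generic $(s,\nu)$ and almost all $\delta$, which should follow from upper-semicontinuity of $\dim{\cal O}(X)/V$ together with the two one-parameter genericity statements already available (Huh's theorem and the $K$-version in Theorem \ref{thm:2intro}), but the bookkeeping of which parameters are "generic relative to which" is the place to be vigilant.
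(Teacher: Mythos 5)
Your proposal takes a genuinely different route from the paper, and it contains a gap at the step you yourself flag as subtle. Let me explain both.

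\textbf{What the paper actually does.} The paper's proof works through the cohomology intersection pairing, not through a matrix $M(\delta)$ built from the de Rham complex. Specifically, it writes $Q(\omega/\d)_{ij} = \langle \beta_i, \beta_j \rangle_{\rm ch}^{\omega/\d}$ and uses two ingredients: Proposition \ref{prop:rational} says that $\d \mapsto Q(\omega/\d)$ has entries that are rational functions of $\d$ (this comes from Serre duality on the compactification $\bar X$); and Proposition \ref{prop:degenpairing} — the stationary phase estimate along Lefschetz thimbles, which is precisely where Assumption \ref{assum:lefschetz} is used — gives the asymptotic
\[
Q(\omega/\d) \,=\, (2\pi\sqrt{-1}\,\d)^n \cdot G \cdot \bigl(1 + O(\d)\bigr), \qquad \d \to 0,
\]
where $G = E E^\top$ is the Grothendieck residue matrix of the $\beta_i$. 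Since $\{[\beta_i]_I\}$ is a basis of ${\cal O}(X)/I$, the evaluation matrix $E$ is invertible, so $G$ is invertible, so $\det Q(\omega/\d) \neq 0$ for all sufficiently small $\d \neq 0$. Combined with rationality in $\d$, this forces $\det Q(\omega/\d) \not\equiv 0$, hence invertibility of $Q(\omega/\d)$ for almost all $\d$, and perfectness of the pairing restricted to $\mathrm{span}\{[\beta_i]\}$ gives that the $\beta_i$ form a basis. The analytic limit is the essential step; it is what converts the hypothesis ``$\{[\beta_i]_I\}$ is an $I$-basis'' into non-vanishing of a rational function of $\d$.

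\textbf{Where your argument has a gap.} Your plan is to show that a suitable maximal minor $p(\d)$ is a nonzero Laurent polynomial in $\d$, first symbolically over $K = \C(s,\nu)$ (via Theorem \ref{thm:2intro} and the $(s,\nu)\mapsto(s/\d,\nu/\d)$ rescaling trick, which is indeed also how the paper proves Theorem \ref{thm:2intro}), and then to specialize $(s,\nu)$. The sentence ``since $(s,\nu)$ are generic by Assumption \ref{assum:lefschetz}, $p$ stays nonzero as an element of $\C[\d,\d^{-1}]$ after specializing $(s,\nu)$'' is exactly the step that is not justified, and cannot be justified from Assumption \ref{assum:lefschetz} alone. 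Assumption \ref{assum:lefschetz} (like Definition \ref{def:generic}) is an \emph{analytic} genericity condition — residues not in $\Z$, nondegenerate Hessians, distinct critical values — not a Zariski-open condition. There is no reason it should place $(s^*,\nu^*)$ outside the Zariski-closed locus where the coefficients of $p(\d) \in K[\d^{\pm 1}]$ all vanish (and even the locus where some denominator of those coefficients vanishes). Example \ref{ex:special} in the paper is designed precisely to show that this kind of failure is real: there the specialization at $\d=1$ kills the class of $(x+1)/x^2$ even though $[(x+1)/x^2]_I$ is a basis. You correctly identify this as the subtle point in your final paragraph, but invoking ``upper-semicontinuity'' and ``the two one-parameter genericity statements'' is not a resolution — it just restates the difficulty. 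Semicontinuity tells you $\dim {\cal O}(X)/V(\omega/\d) \geq \chi$, not that the specific $\beta_i$ remain independent; and the resulting exceptional set in $(s,\nu)$ would be Zariski-closed, which Assumption \ref{assum:lefschetz} does not control. The paper avoids all of this by never working with the infinite-dimensional de Rham complex directly: instead of tracking a minor of a matrix $M(\d)$ on a finite-dimensional truncation $W$ (whose uniformity in $\d$ would itself require justification), it works with the finite $\chi\times\chi$ intersection matrix, whose rationality in $\d$ (Proposition \ref{prop:rational}) and $\d\to 0$ asymptotics (Proposition \ref{prop:degenpairing}) are established independently.

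\textbf{Summary.} Your approach would, if completed, prove a version of the theorem under an unspecified Zariski-open condition on $(s,\nu)$ rather than under Assumption \ref{assum:lefschetz}; as written, it never actually uses the Lefschetz-thimble content of Assumption \ref{assum:lefschetz}, which is a signal that it diverges from what the theorem needs. The paper's stationary phase argument is exactly the missing ingredient: it is how invertibility of the Grothendieck residue matrix $G$ — i.e., your hypothesis — is transferred into non-vanishing of the $\d$-dependent quantity, pointwise at the given $(s^*,\nu^*)$.
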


Under stronger assumptions (Remark \ref{rem:delta1}) one can use $\d = 1$ in this theorem. However, there are special choices of $s, \nu$ and $[\beta_i]_I$ for which this does not work, see Example \ref{ex:special}. 

The connection between the twisted cohomology and the likelihood ideal is made explicit by a \emph{degeneration}. This is formalized in Section \ref{sec:3}. We introduce a new parameter $\d$, so that making $\d$ move from $1$ to $0$ turns ${\cal O}(X_K)/{V_K}$ into ${\cal O}(X_K)/I_K$.
This degeneration also appears in \cite{matsubara2022localization}, where it was used to relate the cohomology intersection pairing to Grothendieck's residue pairing. 
It can be defined over $\mathbb{C}$ as well, and turns Theorems \ref{thm:2intro} and \ref{thm:1intro} into practice: bases of cohomology turn into bases of the likelihood quotient.

Section \ref{sec:4} relates our results to some standard bilinear pairings from the literature, namely the \emph{cohomology intersection pairing} and the \emph{period pairing}. For instance, the former is characterized as a unique bilinear pairing compatible with the $R$-module structure from Theorem \ref{thm:0intro}. This is Theorem \ref{thm:uniqueness}.
The period pairing uses the \emph{twisted homology} $H_n(X,-\omega) = {\rm Hom}_{\mathbb{C}}({\cal O}(X)/V, \mathbb{C})$ \cite[Section 2]{agostini2022vector}.
A preferred basis of $H_n(X,-\omega)$ in this article consists of the \emph{Lefschetz thimbles} \cite[Section 3]{witten2011analytic}. These are called \emph{Lagrangian cycles} in \cite[Section 4.3]{aomoto2011theory}. There is one Lefschetz thimble $\Gamma_j \subset X$ for each critical point $x^{(j)}$ satisfying $\omega(x^{(j)}) = 0$, with the property that $x^{(j)} \in \Gamma_j$. They represent the linear functionals 
\[ [g]_V \, \longmapsto \, \int_{\Gamma_j} g(x) \cdot L(x) \,  \frac{{\rm d} x_1}{x_1} \wedge \cdots \wedge  \frac{{\rm d} x_n}{x_n} . \]
In Section \ref{sec:4}, we will show that when $\d \rightarrow 0$, these Lefschetz thimbles degenerate to the \emph{evaluation functionals} $[g]_I \longmapsto g(x^{(j)})$ on the likelihood quotient ${\cal O}(X)/I$.



Next to finding bases of cohomology (Theorems \ref{thm:2intro} and \ref{thm:1intro}), we also address the following problem. Given a basis $[\beta_1]_{V_K}, \ldots, [\beta_\chi]_{V_K}$ of ${\cal O}(X_K)/{V_K}$ and an element $[g] \in {\cal O}(X_K)/{V_K}$, find the coefficients $c_i \in K$ of $g$ in this basis: $[g] = c_1 \, [\beta_1]_{V_K} + \cdots + c_\chi \, [\beta_\chi]_{V_K}$. 
The unknown coefficients $c_i \in K$ are found from a set of \emph{contiguity matrices} for the ideal $J$ from Theorem \ref{thm:0intro}. 
These are $\chi \times \chi$ matrices over $K$ which encode how the difference operators act on the basis elements $[\beta_i]_{V_K}$.
Contiguity matrices for $\d \rightarrow 0$ become pairwise commuting $K$-linear maps representing \emph{multiplication modulo} $I_K$ (Theorem \ref{thm:multiplication}). 
We show how to compute these matrices and provide an implementation. Our algorithm, inspired by \emph{border basis} algorithms \cite{mourrain1999new}, exploits the fact that a basis for ${\cal O}(X_K)/V_K$ can be computed a priori. For the physics application, it offers an alternative for \emph{Laporta's algorithm} to systematically compute all \emph{integration by parts} relations among Feynman integrals \cite{laporta2000high}.

The paper is organized as follows. Section \ref{sec:2} recalls the twisted de Rham complex and establishes Theorem \ref{thm:0intro}. Section \ref{sec:3} introduces the likelihood ideal and sets up the degeneration which takes the twisted de Rham cohomology to the likelihood quotient. It contains a proof of Theorem \ref{thm:2intro}. In Section \ref{sec:4}, we prove Theorem \ref{thm:1intro} via our degeneration and a perfect pairing of cohomology. We also discuss a different perfect pairing with twisted homology, whose degeneration turns Lefschetz thimbles into evaluation at critical points. Section \ref{sec:5} deals with our computational goals: computing bases for cohomology and computing expansions in this basis. We implement our algorithms in \texttt{Julia}. The code uses the packages \texttt{HomotopyContinuation.jl} \cite{breiding2018homotopycontinuation} and \texttt{Oscar.jl} \cite{OSCAR}. It is made available at \url{https://mathrepo.mis.mpg.de/TwistedCohomology}. In Section \ref{sec:6}, we apply our methods to compute contiguity matrices in several examples, including some Feynman integrals. 

\section{Twisted de Rham cohomology}\label{sec:2}
Fix $\ell$ Laurent polynomials $f_1, \ldots, f_\ell \in \mathbb{C}[x_1^{\pm 1}, \ldots, x_n^{\pm 1}]$ and let $X_{\mathbb{C}} = X$ be as in \eqref{eq:defX}. As alluded to in the introduction, we will need analogous schemes $X_A$ over different rings $A$. In our setting, the ring $A$ satisfies $A = \mathbb{C}$ or $\mathbb{C}[s_1,\ldots, s_\ell,\nu_1, \ldots, \nu_n] \subset A$. Our schemes $X_A$ are
\[ X_A \, = \, {\rm Spec} \, A[x_1^{\pm 1}, \ldots, x_n^{\pm 1}]_{f_1 \cdots f_\ell}. \]
The $A$-module of \emph{regular $k$-forms} on $X_A$ is denoted by 
\begin{equation} \label{eq:regforms}
\Omega^k(X_A) \, = \, \left \{ \sum_{1 \leq j_1< \ldots < j_k \leq n} g_{j_1,\ldots,j_k} \, {\rm d} x_{j_1} \wedge \cdots \wedge {\rm d}x_{j_k} \, \big | \,  g_{j_1, \ldots,j_k} \in \sum_{(a,b)\in \mathbb{Z}^{\ell + n}} A \cdot f^a \, x^b \right \} .
\end{equation}
To construct the algebraic twisted de Rham complex, we consider the one-form
\begin{align} \label{eq:omega}
\begin{split}
\omega \, &= \, - s_1 \, {\rm dlog} f_1 - \cdots  - s_\ell \, {\rm dlog} f_\ell + \frac{\nu_1\, {\rm d}x_1}{x_1} + \cdots +\frac{\nu_n\, {\rm d}x_n}{x_n} \\
 &= \, \sum_{j=1}^n \left( \frac{\nu_j}{x_j} -  s_1 \frac{\frac{\partial f_1}{\partial x_j}}{f_1} - \cdots -s_\ell \frac{\frac{\partial f_\ell}{\partial x_j}}{f_\ell} \right ) {\rm d} x_j \quad \in \Omega^1(X_A). 
 \end{split}
\end{align}
When $A = \mathbb{C}$, $s$ and $\nu$ in this formula are generic tuples of complex numbers. The precise meaning of \emph{generic} is given in Definition \ref{def:generic}.  When $\mathbb{C}[s,\nu] \subset A$, the coefficients of $\omega$ are variables. The one-form $\omega$ is the logarithmic differential of our likelihood function \eqref{eq:likelihood}.
The \emph{twisted differential} $\nabla_\omega: \Omega^k(X_A) \rightarrow \Omega^{k+1}(X_A)$ is given by $\nabla_\omega(\phi) = ({\rm d} + \omega \wedge)\, \phi$.
Here ${\rm d}$ acts on $g \in A[x,x^{-1}]$ by exterior derivation in $x$.
This gives a cochain complex
\begin{equation} \label{eq:cochain}
(\Omega^\bullet(X_A), \nabla_\omega) : \, 0 \longrightarrow \Omega^0(X_A)\overset{\nabla_\omega}{\longrightarrow} \Omega^1(X_A) \overset{\nabla_\omega}{\longrightarrow} \cdots \overset{\nabla_\omega}{\longrightarrow} \Omega^n(X_A) \longrightarrow 0.
\end{equation}
The cohomology of this complex $H^k(\Omega^\bullet(X_A),\nabla_\omega)$ is denoted by $H^k(X_A,\omega)$ for simplicity. We are mainly interested in the top cohomology.
This is the $A$-module
\begin{equation} \label{eq:cohomA}
H^n(X_A,\omega) \, = \,  \frac{\Omega^n(X_A)}{\nabla_\omega(\Omega^{n-1}(X_A))} \, = \, {\cal O}(X_A)/V_A.
\end{equation}
Here ${\cal O}(X_A) = A[x_1^{\pm 1}, \ldots, x_n^{\pm 1}]_{f_1 \cdots f_\ell}$ is the ring of regular functions on $X_A$. The submodule $V_A$ is the image  of $\nabla_\omega(\Omega^{n-1}(X_A))$ under the identification $\Omega^n(X_A) \simeq {\cal O}(X_A)$ which takes $1 \in {\cal O}(X_A)$ to the canonical volume form $
\frac{{\rm d}x}{x} =\frac{{\rm d}x_1}{x_1}\wedge\cdots\wedge\frac{{\rm d}x_n}{x_n} 
$ on the $n$-dimensional algebraic torus. Concretely, ${\cal O}(X_A) \ni g \sim g \frac{{\rm d} x}{x} \in  \Omega^n(X_A)$ and $ {\cal O}(X_A)/V_A \ni [g]_{V_A}  \sim [g \frac{{\rm d} x}{x}] \in H^n(X_A,\omega)$.

\begin{example} \label{ex:expand}
    It is instructive to explicitly write down some elements of $\nabla_\omega(\Omega^{n-1}(X_A))$. An $(n-1)$-form in $\Omega^{n-1}(X_A)$ is an $A$-linear combination of elements of the form 
    \begin{equation} \label{eq:n-1form}
    (-1)^{j-1}\,f^a x^b \, {\rm d} x_{\widehat{j}} \, = \,(-1)^{j-1}\, f^a x^b \, {\rm d} x_1 \wedge \cdots \wedge {\rm d} x_{j-1} \wedge {\rm d} x_{j+1} \wedge \cdots \wedge {\rm d} x_n. 
    \end{equation}
    The image of \eqref{eq:n-1form} under the twisted differential $\nabla_\omega$ is 
    \[ \nabla_\omega((-1)^{j-1}\,f^a x^b \, {\rm d} x_{\widehat{j}}) \, = \, f^a x^b \left ( \frac{\nu_j + b_j}{x_j} - \sum_{i=1}^\ell (s_i - a_i) \frac{\frac{\partial{f_i}}{\partial x_j}}{f_i}  \right) {\rm d}x_1 \wedge \cdots \wedge {\rm d}x_n. \qedhere \] 
\end{example}
If $A$ is a field, e.g.~$A = \mathbb{C}$ or $A = K = \mathbb{C}(s,\nu)$, then $H^n(X_A,\omega)$ in an $A$-vector space. Theorem \ref{thm:dimcohom} below shows that its dimension depends only on the topology of $X_{\mathbb{C}}$. 

Before stating this dimension result, we clarify the meaning of \emph{generic} $s, \nu$. Consider a smooth projective compactification $\bar{X}$ of $X = X_{\mathbb{C}}$ such that the boundary $D=\bar{X}\setminus X$ is a simple normal crossing divisor, with irreducible decomposition $D=\bigcup_{k=1}^{\ell + n} D_k$.
From this compactification, we define a $\mathbb{Z}$-linear form ${\rm Res}_{D_k}(s,\nu)$ for each divisor $D_k$. The coefficients are the orders of vanishing of $f_i^{-1}, x_j$ along $D_k$, see Example \ref{ex:special} and \cite[Lemma A.2]{agostini2022vector}.
\begin{definition} \label{def:generic}
    The parameters $(s,\nu)\in\mathbb{C}^{\ell+n}$ are \emph{generic} if  ${\rm Res}_{D_k}(s,\nu)\notin \mathbb{Z}$ for any $k$.
\end{definition}

\noindent
This notion of genericity is not a Zariski open condition in $\C^{\ell + n}$.
It is, however, a mild requirement: generic $(s,\nu)$ are open and dense in the standard topology of $\mathbb{C}^{\ell+n}$.
\begin{theorem} \label{thm:dimcohom}
    If $A = \mathbb{C}$ and $s, \nu$ are generic in the sense of Definition \ref{def:generic}, or $A = K = \mathbb{C}(s,\nu)$, then ${\rm dim}_A \, H^n(X_A,\omega) = (-1)^n \cdot \chi(X_{\mathbb{C}})$, where $\chi(\cdot)$ denotes the Euler characteristic.
\end{theorem}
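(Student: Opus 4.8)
The plan is to reduce the statement to a known computation of twisted (co)homology of a very affine variety, due essentially to Esnault–Schechtman–Viehweg and Adolphson–Sperber, and made explicit for this exact setup in \cite{agostini2022vector}. Concretely, the key input is that for generic $s,\nu$ (Definition \ref{def:generic}), the twisted de Rham complex $(\Omega^\bullet(X_{\mathbb C}),\nabla_\omega)$ has cohomology concentrated in degree $n$, i.e.\ $H^k(X_{\mathbb C},\omega)=0$ for $k\neq n$. Given this vanishing, the Euler characteristic of the complex equals $(-1)^n\dim_{\mathbb C} H^n(X_{\mathbb C},\omega)$. But the Euler characteristic of a complex is an invariant that can be computed termwise, and each term $\Omega^k(X_{\mathbb C})$ is a finitely generated module whose ``size'' is governed by the de Rham complex of $X$ with \emph{trivial} coefficients; since $X$ is a smooth affine variety, its algebraic de Rham cohomology computes $H^\ast_{\mathrm{dR}}(X;\mathbb C)$, whose Euler characteristic is $\chi(X_{\mathbb C})$. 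The point is that twisting by $\omega$ does not change the alternating sum of dimensions, because $\omega\wedge$ is a degree-$0$ perturbation of the differential (the principal symbol is unchanged).

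**First I would** make the last sentence precise. The cleanest route: work in the bounded derived category of coherent sheaves (or of $D$-modules) on $X$. The twisted de Rham complex computes $\mathrm{DR}(X,{\cal L})$ where ${\cal L}$ is the rank-one integrable connection $({\cal O}_X,\mathrm d+\omega\wedge)$. Since $X$ is smooth affine, $\mathbb{H}^k(X,\mathrm{DR}(X,{\cal L}))=H^k(X,\omega)$, and the holonomic Euler characteristic $\chi(X,{\cal L})$ equals $\mathrm{rank}({\cal L})\cdot\chi(X_{\mathbb C})=\chi(X_{\mathbb C})$ — this is the Euler–Poincaré principle for holonomic $D$-modules / local systems (Kashiwara, or Dubson–Kashiwara), or more elementarily the fact that on a space homotopy equivalent to a finite CW complex, the Euler characteristic of the cohomology of any local system equals $\chi$ times the rank. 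Combined with the degree-$n$ concentration, this gives $\dim_{\mathbb C}H^n(X_{\mathbb C},\omega)=(-1)^n\chi(X_{\mathbb C})$. For the case $A=K=\mathbb C(s,\nu)$, I would invoke generic flatness / semicontinuity: the dimension of $H^n(X_A,\omega)$ over the function field $K$ equals its dimension over $\mathbb C$ for all $(s,\nu)$ in a Zariski-dense open set, and these necessarily include generic points in the sense of Definition \ref{def:generic} (which are dense in the analytic topology), so the two dimensions agree.

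**The hard part will be** the degree-$n$ concentration $H^k(X_{\mathbb C},\omega)=0$ for $k<n$ under the genericity hypothesis of Definition \ref{def:generic} — this is exactly where the boundary residue condition ${\rm Res}_{D_k}(s,\nu)\notin\mathbb Z$ is used. I would cite this rather than reprove it: it follows from the comparison theorem of Esnault–Schechtman–Viehweg (relating algebraic and logarithmic de Rham complexes for arrangement-type complements when no residue is a nonnegative integer) together with the affine Artin–Grothendieck vanishing (a rank-one local system on a smooth affine variety of dimension $n$ has cohomology only in degree $n$ when it has no sections on any stratum), or directly from the argument in \cite[Appendix A]{agostini2022vector}. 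An alternative self-contained approach avoids concentration entirely: show that the hypercohomology spectral sequence, or a direct filtration argument on $\Omega^\bullet(X_A)$ by degree in the $f_i$ and $x_j$, degenerates in a way that lets one read off $\sum(-1)^k\dim H^k$ as an Euler characteristic of a complex of free modules whose ranks match those computing $\chi(X_{\mathbb C})$; but this still ultimately rests on genericity to control the lower cohomologies, so citing the concentration result is the honest and efficient choice.

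**A subtlety to flag** is the sign and the identification $\Omega^n(X_A)\simeq{\cal O}(X_A)$ via $\frac{\mathrm dx}{x}$: this identification is what lets us later pass to the likelihood quotient, but for the present dimension count it is irrelevant since an isomorphism of $A$-modules preserves dimension; I would simply note $H^n(X_A,\omega)={\cal O}(X_A)/V_A$ and proceed. I would also remark that $(-1)^n\chi(X_{\mathbb C})=|\chi(X_{\mathbb C})|$ when $X$ is a smooth very affine variety, by the result of Huh \cite{huh2013maximum} quoted in the introduction — consistent with the statement in Theorem \ref{thm:2intro} — but this sign interpretation is not needed for Theorem \ref{thm:dimcohom} itself.
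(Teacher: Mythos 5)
Your proof is correct but takes a genuinely different route from the paper's. The paper never invokes degree-$n$ concentration or the Euler--Poincar\'e principle for local systems. It instead passes to a smooth snc compactification $\bar X$, identifies $H^n(X_A,\omega)$ with the hypercohomology of the logarithmic twisted complex $(\Omega^\bullet_{A,\log}(kD),\nabla_\omega)$ (via the quasi-isomorphism from \cite{esnault1992lectures}), and uses the $E_1$-degeneration of the Hodge-to-de Rham spectral sequence (Lemma~\ref{lem:specseq}, proved via Grothendieck--Serre vanishing) to express $\dim_A H^n(X_A,\omega)$ as $\sum_{p+q=n}\dim_A\mathbb H^q(\bar X_A;\Omega^p_{A,\log}(kD))$, a quantity that is visibly independent of the twist and is then identified with $(-1)^n\chi(X_\C)$. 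Your route --- concentration plus $\chi(X;\mathcal L)={\rm rank}(\mathcal L)\cdot\chi(X)$ --- is leaner and more topological, and would suffice for this theorem in isolation. The paper's choice is deliberate, though: exactly the same logarithmic-compactification and spectral-sequence machinery is reused for the $\d$-deformed connection $\nabla_\omega^\d$ over $K[\![\d]\!]$ and $K(\!(\d)\!)$ (Lemma~\ref{lem:specseq d}, Corollary~\ref{cor:laurentseries}, the finiteness step in Theorem~\ref{thm:rankdegen}) and for the Serre-duality description of the cohomology intersection pairing (Proposition~\ref{prop:rational}), where no ready-made Euler--Poincar\'e statement is available over those coefficient rings.

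One point in your $A=K$ reduction deserves more care. Invoking ``generic flatness/semicontinuity'' presupposes that $H^n(X_A,\omega)$ is finitely generated over a Noetherian ring $A\supset\C[s,\nu]$, and this is not visible from the affine complex $(\Omega^\bullet(X_A),\nabla_\omega)$ --- each term $\Omega^k(X_A)$ is an infinite-rank $A$-module. The paper gets coherence for free from properness of $\bar X_A$, after which the canonical base-change isomorphism $\mathbb H^q(\bar X_K;\Omega^p_{K,\log}(kD))\simeq\mathbb H^q(\bar X_\C;\Omega^p_{\C,\log}(kD))\otimes_\C K$ gives the $K$-statement directly, without having to locate a parameter point in the intersection of a Zariski-dense open locus with the analytically dense but not Zariski-open set of Definition~\ref{def:generic}-generic $(s,\nu)$. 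Your semicontinuity route can be made rigorous along the same lines, but you should make the coherence (or compactification) step explicit rather than implicit.
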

The proof of Theorem \ref{thm:dimcohom} will make use of a technical lemma. Let $\bar{X}_K$ be a smooth projective compactification of $X_K$ obtained from $\bar{X} = \bar{X}_\C$ above via the base extension $\mathbb{C} \rightarrow K$.
For $A = \C$ or $K$, let $\Omega^p_{A,\log}$ denote the sheaf of $p$-forms on $\bar{X}_A$, logarithmic along $D$. Moreover, we write $\Omega^p_{A,\log}(kD) = \Omega^p_{A,\log}\otimes_A\mathcal{O}_{\bar{X}_A}(kD)$ for any integer $k$. 
\begin{lemma} \label{lem:specseq}
    If $A = \mathbb{C}$ and $s, \nu$ are generic in the sense of Definition \ref{def:generic}, or $A = K = \mathbb{C}(s,\nu)$, then the Hodge-to-de Rham spectral sequence 
   \begin{equation} \label{eq:specseq}
    E^{p,q}_1 \, = \, \mathbb{H}^q(\bar{X}_A \, ; \, \Omega^p_{A,\log}(kD))\,\, \Rightarrow \, \, \mathbb{H}^{p+q}(\bar{X}_A \, ; \, (\Omega_{A,\log}^\bullet(kD),\nabla_\omega))
\end{equation}
    degenerates at the $E_1$ stage for sufficiently large $k$, and $\dim_A H^n(X_A,\omega)$ equals
    \begin{equation} \label{eq:specseq2}
    \dim_A \mathbb{H}^{n}(\bar{X}_A;(\Omega_{A,\log}^\bullet(kD),\nabla_\omega)) \, = \, \sum_{p+q=n}\dim_A\mathbb{H}^q(\bar{X}_A;\Omega^p_{A,\log}(kD)).
\end{equation}
\end{lemma}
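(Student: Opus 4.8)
The plan is to reduce the degeneration of the spectral sequence \eqref{eq:specseq} to a statement about the vanishing of certain twisted cohomology groups, and to establish that vanishing by a Deligne-type argument on a good compactification. First I would recall that for the meromorphic connection $\nabla_\omega = \mathrm{d} + \omega\wedge$ on $\mathcal{O}_{\bar X_A}(kD)$, the logarithmic de Rham complex $(\Omega^\bullet_{A,\log}(kD), \nabla_\omega)$ computes the algebraic de Rham cohomology $H^\bullet(X_A,\omega)$ once $k$ is large enough: this is a standard consequence of the comparison between the logarithmic complex with poles bounded by $kD$ and the full meromorphic complex $\Omega^\bullet_{\bar X_A}(*D)$ on a smooth compactification with SNC boundary, under the genericity hypothesis on $\mathrm{Res}_{D_j}(s,\nu)$ (Definition \ref{def:generic}), which guarantees that no residue is a nonnegative integer and hence that the relevant acyclicity holds after twisting by $\mathcal{O}(kD)$. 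I would cite \cite{agostini2022vector} (in particular the appendix) and the classical references on twisted logarithmic de Rham theory for this step, so that \eqref{eq:specseq} is a legitimate hypercohomology spectral sequence converging to $\mathbb{H}^{p+q}(\bar X_A;(\Omega^\bullet_{A,\log}(kD),\nabla_\omega)) \cong H^{p+q}(X_A,\omega)$.

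Next I would prove $E_1$-degeneration. The key point is that the differential $\nabla_\omega$ on the associated graded is the $\mathcal{O}_{\bar X_A}$-linear map given by wedging with $\omega$ (the $\mathrm{d}$ part drops out on $\mathrm{gr}$), so $E_1^{p,q}$-degeneration is equivalent to exactness properties of the Koszul-type complex $(\Omega^\bullet_{A,\log}(kD), \omega\wedge)$. Under the genericity assumption, $\omega$ has no zeros "at infinity" in the appropriate sense — more precisely, the residue conditions force $\omega$ to define, after twisting, a nowhere-vanishing section of $\Omega^1_{A,\log}(kD)$-type data along each boundary stratum — so the wedge complex is exact except in top degree, which kills the higher $E_1$ differentials. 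I would also invoke the dimension count of Theorem \ref{thm:dimcohom}'s ambient input, namely that $H^m(X_A,\omega)=0$ for $m\neq n$ (this is the cohomological analogue of Huh's finiteness result and is established by a Lefschetz/Morse-theoretic argument on the very affine variety; over $K$ it follows by base change), together with the Euler-characteristic identity $\sum_{p+q=m}(-1)^{p+q}\dim E_1^{p,q}$, which is constant across pages. Once we know the abutment is concentrated in degree $n$ and the $E_1$ Euler characteristic already equals $(-1)^n\chi(X_\C)$, any higher differential would decrease some $\dim E_\infty^{p,q}$ while preserving the alternating sum, contradicting concentration in a single total degree combined with the non-negativity of all the $E_1$ terms — a short diagram-chase forces $E_1 = E_\infty$ and hence the displayed formula \eqref{eq:specseq2}.

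Finally, for the base-field case $A = K = \mathbb{C}(s,\nu)$ I would note that everything is obtained from a $\mathbb{C}[s,\nu]$-model by a flat base change, and the relevant coherent cohomology groups $\mathbb{H}^q(\bar X_A;\Omega^p_{A,\log}(kD))$ commute with this base change (they are finitely generated and their formation is compatible with localization), so degeneration over $K$ follows from degeneration over a suitable generic $\mathbb{C}$-point, while the equality of dimensions over $K$ and over generic $\C$ is exactly the semicontinuity/genericity statement underlying Theorem \ref{thm:dimcohom}. The main obstacle I anticipate is making the phrase "$\omega$ is nowhere vanishing at infinity" precise enough to drive the $\mathrm{gr}$-exactness: one must translate the residue genericity of Definition \ref{def:generic} into the statement that, locally near each intersection of boundary components, after the twist by $\mathcal{O}(kD)$ the coefficients of $\omega$ generate the unit ideal, so that the Koszul complex on $\omega\wedge$ is exact. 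This is where the genericity hypothesis is genuinely used and where the bookkeeping with the orders of vanishing of the $f_i^{-1}$ and $x_j$ along the $D_k$ (as in Example \ref{ex:special}) has to be carried out carefully; the rest is formal spectral-sequence manipulation.
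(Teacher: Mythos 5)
The paper's own proof of this lemma is a one--liner: it invokes the Grothendieck--Serre vanishing theorem \cite[Proposition 2.6.1]{EGA}, which gives $\mathbb{H}^q(\bar X_A;\Omega^p_{A,\log}(kD))=0$ for all $q>0$ once $k$ is large (since $\mathcal{O}_{\bar X_A}(D)$ is ample). The $E_1$ page is then concentrated in the row $q=0$, and the degeneration claim (and \eqref{eq:specseq2}) is read off from that. Your proposal never invokes this vanishing theorem, and instead argues through a Koszul-type exactness of $(\Omega^\bullet_{A,\log}(kD),\omega\wedge)$ together with an Euler-characteristic count. This is not merely a different route --- it contains a substantive confusion that needs to be flagged.

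The key issue is your claim that ``the differential $\nabla_\omega$ on the associated graded is the $\mathcal{O}_{\bar X_A}$-linear map given by wedging with $\omega$.'' The spectral sequence as written in \eqref{eq:specseq} is the one coming from the stupid (b\^{e}te) filtration of the complex $(\Omega^\bullet_{A,\log}(kD),\nabla_\omega)$: its $E_1$ terms are the coherent cohomology groups $\mathbb{H}^q(\bar X_A;\Omega^p_{A,\log}(kD))$, and the $d_1$ differential on the $E_1$ page is $H^q(\nabla_\omega)$, \emph{not} $\omega\wedge$. The picture you describe --- where ``$\mathrm{d}$ drops out on $\mathrm{gr}$'' and $\omega\wedge$ survives --- belongs to a pole-order-type filtration, which is a different spectral sequence. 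The Koszul exactness of $\omega\wedge$ under the residue genericity of Definition \ref{def:generic} is indeed a crucial ingredient in this paper, but it is used to prove the quasi-isomorphism $(\Omega^\bullet_{A,\log}(kD),\nabla_\omega)\simeq(\Omega^\bullet_A(*D),\nabla_\omega)$ invoked in the proof of \emph{Theorem} \ref{thm:dimcohom} (via \cite[Properties 2.9]{esnault1992lectures}), not to establish Lemma \ref{lem:specseq}; you are importing an argument from the wrong part of the proof. Moreover, your Euler-characteristic ``diagram chase'' does not force $E_1$-degeneration: preservation of the alternating sum across pages together with concentration of the abutment in a single degree is perfectly compatible with non-trivial differentials (indeed, if the $E_1$ page has nonzero entries in multiple total degrees while the abutment is concentrated in one degree, differentials must be nonzero). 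What actually makes the argument work in the paper is that the $E_1$ page is concentrated in the single row $q=0$ by Serre vanishing, which is the exact ingredient your proposal omits.
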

\begin{proof}
    The degeneration of \eqref{eq:specseq} is a consequence of the vanishing theorem by Grothendieck and Serre, see \cite[Proposition 2.6.1]{EGA}.
    The second claim follows from that degeneration.
\end{proof}

\begin{proof}[Proof of Theorem \ref{thm:dimcohom}]
We set $\Omega_{A,\log}^p(*D):=\Omega_{A,\log}^p\otimes_A\mathcal{O}_{\bar{X}_A}(*D)$, with $\mathcal{O}_{\bar{X}_A}(*D)$ the sheaf of rational functions on $\bar{X}_A$ with poles along $D$.
    Our cohomology vector space equals the hypercohomology group
    \begin{equation} \label{eq:hypercohom}
    H^n(X_A,\omega) \, = \, \mathbb{H}^{n}(\bar{X}_A;(\Omega_{A}^\bullet(*D),\nabla_\omega)). 
    \end{equation}
    The canonical morphism $(\Omega_{A,\log}^\bullet(kD),\nabla_\omega)\rightarrow(\Omega_{A}^\bullet(*D),\nabla_\omega)$ is a quasi-isomorphism for any $k\in\mathbb{Z}$.
    This follows from the same argument as \cite[Properties 2.9]{esnault1992lectures}.
    Therefore, \eqref{eq:hypercohom} implies $H^n(X_A,\omega)=\mathbb{H}^{n}(\bar{X}_A;(\Omega_{A,\log}^\bullet(kD),\nabla_\omega))$. Applying \eqref{eq:specseq2}, we have for sufficiently large $k$ that
\begin{equation*}
   \dim_A H^n(X_A,\omega) \,  = \, \sum_{p+q=n}\dim_A\mathbb{H}^q(\bar{X}_A;\Omega^p_{A,\log}(kD)).
\end{equation*}
Here, to apply Lemma \ref{lem:specseq} for $A = \C$ we need the genericity assumption (Definition \ref{def:generic}). If $A = \C$, the statement is proved, as the righthand side equals $(-1)^n \cdot \chi(X_{\mathbb{C}})$. 
For $A = K$, note that $\dim_K\mathbb{H}^q(\bar{X}_K; \Omega^p_{K,\log}(kD))=\dim_{\C}\mathbb{H}^q(\bar{X}_\C;\Omega^p_{\C,\log}(kD))$, as there is a canonical isomorphism $\mathbb{H}^q(\bar{X}_K;\Omega^p_{K,\log}(kD))\simeq \mathbb{H}^q(\bar{X}_{\C};\Omega^p_{\C,\log}(kD))\otimes_{\C}K$. This gives
\[
    \sum_{p+q=n}\dim_K\mathbb{H}^q(\bar{X}_K;\Omega^p_{K,\log}(kD))  \, =  \, \sum_{p+q=n}\dim_{\C}\mathbb{H}^q(\bar{X}_\C;\Omega^p_{\C,\log}(kD)).
\]
We conclude $\dim_K H^n(X_K,\omega) = (-1)^n \cdot \chi(X_{\mathbb{C}})$, and we are done.
\end{proof}

For applications in physics, where our likelihood function is a Feynman integrand in Lee-Pomeransky representation \cite{lee2013critical}, the relevant case is $\ell = 1$. A basis for $H^n(X_A,\omega)$ corresponds to a set of \emph{master integrals} \cite{henn2013multiloop}. Relations between Feynman integrals come from $A$-linear relations modulo $V_A$. Computing such bases and relations is the topic of Section \ref{sec:5}. Our algorithms rest on the main result of this section, which is Theorem \ref{thm:RmodJ} below.

We set $A = K = \mathbb{C}(s,\nu)$.
We introduce a non-commutative ring of \emph{difference operators} $R=K\langle \s_{s_1}^{\pm 1}, \ldots, 
\s_{s_\ell}^{\pm 1},\s_{\nu_1}^{\pm 1},\dots,\s_{\nu_n}^{\pm 1}\rangle$, generated by $\s_s, \s_{\nu_j}$ and their inverses, with relations 
\begin{equation} \label{eq:commutators}
  [\s_{s_i}, \s_{s_j}] = [\s_{s_i},\s_{\nu_j}]=[\s_{\nu_i},\s_{\nu_j}]=0, \, \,  \ [\s_{s_i},s_j]= \delta_{ij} \s_{s_j}, \, \, [\s_{\nu_i},\nu_j]=\delta_{ij}\s_{\nu_i}.
\end{equation}
Here $[ a, b ]=ab-ba$ is the commutator in the ring $R$ and $\delta_{ij}$ is Kronecker's delta. Let $e_j$ be the $j$-th standard basis vector. The difference operators $\sigma_{s_i}$ and $\sigma_{\nu_j}$ act on ${\cal O}(X_K)$ by
\begin{equation} \label{eq:Raction}
  \s_{s_i} \bullet g(s,\nu) =f_i^{-1} \cdot g(s+e_i,\nu) ,\quad \s_{\nu_j} \bullet g(s,\nu) = x_j \cdot g(s,\nu+e_j).
\end{equation}
The notation $g(s,\nu)$ emphasizes the dependence of the regular function $g \in {\cal O}(X_K)$ on $s$ and $\nu$. The action of $R$ is obtained by extending \eqref{eq:Raction} $K$-linearly. 
This action turns ${\cal O}(X_K)$ into a left $R$-module. 
Moreover, using notation from Example \ref{ex:expand}, the observation that $\sigma_{s_i} \bullet \nabla_\omega(f^a x^b {\rm d}x_{\widehat{j}}) = \nabla_\omega(f^{a-e_i}x^b {\rm d}x_{\widehat{j}})$ and $\sigma_{\nu_k} \bullet \nabla_\omega(f^a x^b {\rm d}x_{\widehat{j}}) = \nabla_\omega(f^{a}x^{b+e_k} {\rm d}x_{\widehat{j}} )$ shows that the $R$-action is well defined modulo $V_K$, so that ${\cal O}(X_K)/V_K$ is a left $R$-module as well.
Our proof of the next theorem relies on $D$-module theory, in particular on results from \cite{loeser1991equations}. It briefly recalls the main relevant concepts. The reader is referred to \cite{loeser1991equations,SST,sattelberger2019d} for details.
\begin{theorem} \label{thm:RmodJ}
    As a left $R$-module, the cohomology $H^n(X_K,\omega) = {\cal O}(X_K)/V_K$ is isomorphic to the quotient $R/J$ of $R$ by the left ideal $J \subset R$ generated by
  \begin{equation}\label{eqn:2}
    1-\s_{s_i} f_i(\s_\nu), \text{ for }i = 1, \ldots, \ell, \quad \text{ and } \quad \s_{\nu_j}^{-1} \nu_j - \sum_{i=1}^\ell s_i \, \s_{s_i} \frac{\partial f_i}{\partial x_j}(\s_{\nu}), \text{ for } j = 1, \ldots, n.
  \end{equation}
  Moreover, the isomorphism sends the residue class of $\s_s^a \s_{\nu}^b$ to $[f^{-a}x^b]_{V_K} \in {\cal O}(X_K)/V_K$.
\end{theorem}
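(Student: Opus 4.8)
The plan is to define an explicit $R$-linear map $\Phi\colon R \to \mathcal{O}(X_K)/V_K$ by $\Phi(P) = P \bullet [1]_{V_K}$, using the left $R$-module structure on $\mathcal{O}(X_K)/V_K$ established just before the theorem, then to show (i) $\Phi$ is surjective, (ii) $J \subseteq \ker \Phi$, and (iii) $\ker \Phi \subseteq J$. Steps (i) and (ii) are short computations; the real content is the dimension-counting argument that closes the gap in (iii). Note first that $\Phi(\s_s^a \s_\nu^b) = \s_s^a \s_\nu^b \bullet [1]_{V_K} = [f^{-a} x^b]_{V_K}$ directly from \eqref{eq:Raction}, which is exactly the claimed formula and also makes surjectivity immediate since the monomials $f^{-a} x^b$ span $\mathcal{O}(X_K)$ over $K$, hence span $\mathcal{O}(X_K)/V_K$.

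For (ii) I would check that each listed generator of $J$ annihilates $[1]_{V_K}$. For the first family: $(1 - \s_{s_i} f_i(\s_\nu)) \bullet [1]_{V_K} = [1]_{V_K} - \s_{s_i} \bullet [f_i(x)]_{V_K}$, and since $\s_{s_i} \bullet f_i(x) = f_i^{-1} \cdot f_i = 1$, this vanishes. (Here one uses that the order relations \eqref{eq:commutators} make $\s_{s_i} f_i(\s_\nu)$ act correctly: applying $f_i(\s_\nu)$ first produces $f_i(x) \cdot 1$, then $\s_{s_i}$ multiplies by $f_i^{-1}$.) For the second family, apply $\s_{\nu_j}^{-1} \nu_j - \sum_i s_i \s_{s_i} \tfrac{\partial f_i}{\partial x_j}(\s_\nu)$ to $[1]_{V_K}$: the first term gives $[\nu_j/x_j]_{V_K}$ and the $i$-th summand gives $s_i [\tfrac{1}{f_i}\tfrac{\partial f_i}{\partial x_j}]_{V_K}$, so the total is $\bigl[\tfrac{\nu_j}{x_j} - \sum_i s_i \tfrac{1}{f_i}\tfrac{\partial f_i}{\partial x_j}\bigr]_{V_K}$, which is the coefficient of $\mathrm{d}x_j$ in $\omega$ times the volume form — and this is precisely $\nabla_\omega\bigl((-1)^{j-1} \mathrm{d}x_{\widehat{j}}\bigr)$ in the notation of Example \ref{ex:expand} with $a = b = 0$, hence lies in $V_K$. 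Thus $\Phi$ descends to a surjection $\bar\Phi\colon R/J \twoheadrightarrow \mathcal{O}(X_K)/V_K$.

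It remains to prove $\bar\Phi$ is injective, equivalently that $R/J$ has $K$-dimension at most $(-1)^n\chi(X_\C) = \dim_K \mathcal{O}(X_K)/V_K$ (the latter by Theorem \ref{thm:dimcohom}). This is where I expect the main obstacle, and where $D$-module theory enters: I would recognize the Fourier/Mellin transform picture in which $\mathcal{O}(X_K)/V_K$ is the GKZ-type/Euler–Mellin system and identify $R/J$ with the corresponding module of difference operators, invoking the results of \cite{loeser1991equations} on the $b$-function / holonomicity to control its rank. Concretely, one shows that the left ideal $J$ is holonomic of the expected rank: the relations \eqref{eqn:2} are exactly the "difference Mellin–Barnes" equations satisfied by the integrals $\int_\Gamma f^{-s} x^\nu g \,\tfrac{\mathrm{d}x}{x}$, and by Loeser–Sabbah the holonomic rank of this system equals the rank of the associated de Rham complex, i.e.\ $(-1)^n\chi(X_\C)$. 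Combining $\dim_K R/J \le (-1)^n \chi(X_\C) = \dim_K \mathcal{O}(X_K)/V_K$ with the surjection $\bar\Phi$ forces $\bar\Phi$ to be an isomorphism. The final sentence of the theorem — that the class of $\s_s^a \s_\nu^b$ maps to $[f^{-a} x^b]_{V_K}$ — is then just the formula for $\Phi$ recorded at the start, now transported through the isomorphism.
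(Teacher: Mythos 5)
Your proposal and the paper's proof rest on the same deep input: the Loeser--Sabbah comparison between the Mellin transform of a $D$-module and its push-forward. Where you differ is in the packaging. The paper works entirely inside $D$-module theory: it introduces the local cohomology module $M = D_{\ell+n,\mathbb{C}}/H$ with $H$ generated by \eqref{eq:localcohom}, shows that the Mellin transform functor $\mathfrak{M}(\cdot)(s,\nu)$ applied to $M$ yields exactly $R/J$ (Claim 1), shows separately via Kashiwara's equivalence that the push-forward ${\cal H}\{M\}$ equals $H^n(X_K,\omega)$ (Claim 2), and concludes by Loeser--Sabbah's Lemme 1.2.2 that these two functors produce isomorphic modules. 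The explicit formula for the isomorphism is then built into how $\mathfrak{M}$ trades $x_j, z_i$ for $\sigma_{\nu_j},\sigma_{s_i}$. Your route — build $\Phi(P) = P \bullet [1]_{V_K}$, check $J \subseteq \ker\Phi$ by hand, note surjectivity is immediate, then close the gap by a dimension count — is a legitimate and more elementary-looking presentation; in particular it makes the ``Moreover'' formula automatic from the definition of $\Phi$ rather than implicit in the Mellin transform.

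However, the crucial step in your argument is left as a sketch: you say one should ``identify $R/J$ with the corresponding module of difference operators'' and ``invoke Loeser--Sabbah to control its rank,'' but you never actually establish $\dim_K R/J \le (-1)^n\chi(X_\C)$. That identification \emph{is} the paper's Claim 1, and the rank statement is precisely the conjunction of Claim 2 (proved via Kashiwara's equivalence and the sequence of isomorphisms \eqref{eq:8}--\eqref{eq:9}) with the Loeser--Sabbah theorem. Without carrying out those identifications — in particular without introducing the local cohomology module and showing why its push-forward is the twisted cohomology — the dimension bound is asserted, not proven. There is also a small arithmetic slip in your check that $J\subseteq\ker\Phi$: $\sigma_{\nu_j}^{-1}\nu_j \bullet 1 = (\nu_j-1)/x_j$, not $\nu_j/x_j$, and the corresponding element of $V_K$ comes from Example \ref{ex:expand} with $b = -(1,\ldots,1)$, not $b=0$ (you must account for the identification $g \sim g\,\tfrac{\mathrm{d}x}{x}$, under which $\mathrm{d}x_1\wedge\cdots\wedge\mathrm{d}x_n$ corresponds to $x_1\cdots x_n$). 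The two errors happen to compensate, so the conclusion stands, but as written the computation is off.
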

\begin{proof}
   Let $T^{\ell+n}_A = {\rm Spec} \, A[x_1^{\pm 1}, \ldots, x_n^{\pm 1}, z_1^{\pm 1}, \ldots, z_\ell^{\pm 1}]$ be the $(\ell+n)$-dimensional algebraic torus over a field $A$.
   Its Weyl algebra $D_{\ell+n,A} = D_{T^{\ell+n}_{A}}$ consists of linear differential operators in the $\ell + n$ variables $x, z$ with coefficients in $A[x^{\pm 1}, z^{\pm 1}]$.
   Our very affine variety $X$ is naturally embedded into $T^{\ell + n}_{\mathbb{C}}$ via $x \mapsto(x_1, \ldots, x_n, f_1(x)^{-1}, \ldots, f_\ell(x)^{-1})$.
   The \emph{local cohomology} of $X \subset T^{\ell + n}_{\mathbb{C}}$ is the $D_{\ell+n,\mathbb{C}}$-module $M = D_{\ell+n,\mathbb{C}}/H$, where $H$ is the left $D_{\ell+n,\mathbb{C}}$-ideal generated~by 
    \begin{equation} \label{eq:localcohom} 1-z_if_i, \text{ for }i = 1, \ldots, \ell, \quad \text{ and } \quad - x_j\partial_{x_j} + \sum_{i=1}^\ell z_i \partial_{z_i} z_i  x_j\frac{\partial f_i}{\partial x_j}, \text{ for } j = 1, \ldots, n. 
    \end{equation}
    We consider two different ways to construct a left $R$-module from $M$:
    \begin{enumerate}
        \item[(1)] 
        {In \cite[Th\'eor\`eme 1.2.1, Lemme 1.2.2]{loeser1991equations}, ${\frak M} ( \cdot )(s,\nu)$ is a functor from the category of left $D_{\ell + n,\mathbb{C}}$-modules to the category of left $R$-modules. It performs the algebraic Mellin transform ${\frak M}(\cdot)$ of a left $D_{\ell + n,\mathbb{C}}$-module and then applies the tensor product with $K = \mathbb{C}(s,\nu)$. The first step replaces the action of $-x_j\partial_{x_j}$ with that of $\nu_j$, $-z_i \partial_{z_i}$ with $s_i$, $x_j$ with $\sigma_{\nu_j}$, and $z_i$ with $\sigma_{s_i}$. We claim that ${\frak M}(M)(s,\nu) = {\frak M}(M) \otimes_{\mathbb{C}[s,\nu]} K = R/J$.
        }
        \item[(2)] We construct the left $D_{\ell+n,K}$-module $M(s,\nu) \bullet z^{s}x^\nu$ by applying linear differential operators in $D_{\ell+n,K}$ to $z^{s} x^\nu$, and regarding the result modulo $H_K \bullet z^{s}x^\nu$, where $H_K$ is the left $D_{\ell+n,K}$-ideal generated by \eqref{eq:localcohom}.
        In symbols, $M(s,\nu) = M \otimes_{\mathbb{C}[s,\nu]} K$.
        We consider the push-forward ${\cal H}\{M \} = \pi_+(M(s,\nu) \bullet z^{s}x^\nu)$ in the sense of $D$-modules under the constant map $\pi: T^{\ell + n}_K \rightarrow {\rm Spec} \, K$.
        We claim that ${\cal H}\{M\} = H^n(X_K,\omega) = {\cal O}(X_K)/V_K$.
    \end{enumerate}
     The theorem follows from these claims, as ${\frak M}(M)(s,\nu) \simeq {\cal H}\{M\}$ by a result of Loeser and Sabbah \cite[Lemme 1.2.2]{loeser1991equations}. Claim (1) is easily verified by observing that ${\frak M}(D_{\ell+n,\mathbb{C}})(s,\nu)= R$ and the Mellin transform turns the generators in \eqref{eq:localcohom} into \eqref{eqn:2}.
     For claim (2), let $Z \subset T_K^{n+\ell}$ be the natural embedding of $X_K$ in $T_K^{n+\ell}$. Its equations are $ 1-z_1f_1(x)= \cdots  = 1-z_\ell f_\ell(x) = 0$. We write $Z \overset{\iota}{\hookrightarrow} T_K^{n+\ell}$ for the inclusion.
In view of Kashiwara's equivalence \cite[Chapter VI, Theorem 7.13]{Borel} and following the notation of \cite{Borel}, we obtain a sequence of isomorphisms
 \begin{align}
     \mathcal{H} \{ M \}
    \,  &= \, \pi_+\left(\mathbb{R}\Gamma_Z\mathcal{O}_{T_K^{n+\ell}}[\ell]\otimes\mathcal{O}z^sx^{\nu}\right) \, = \, \pi_+\left(\iota_+\iota^!\mathcal{O}_{T_K^{n+\ell}}[\ell]\otimes\mathcal{O}z^sx^{\nu}\right) \label{eq:8}\\                   &\simeq \, \pi_+\left(\iota_+\mathcal{O}_{Z}z^sx^{\nu}\right)  \simeq \, \pi_+\left(\iota_+\mathcal{O}_{Z}f^{-s}x^{\nu}\right) \simeq \, (\pi\circ\iota)_+\mathcal{O}_{Z}f^{-s}x^{\nu} \label{eq:9}\\ \vspace{0.2cm}
     &= \, H^n(X_K,\omega)\nonumber.
 \end{align}
Here passing from \eqref{eq:8} to \eqref{eq:9} uses $\iota^!\mathcal{O}_{T_K^{n+\ell}}[\ell]\simeq \mathcal{O}_Z$. 
\end{proof}

\begin{remark}
The authors of \cite{BBKP} exploit the difference module structure only in the $\nu$-variables, for $\ell=1$.
In fact, the parametric annihilator ideal in that paper arises from the Mellin transform of $H$ in the $z$-direction, viewed as a module over the Weyl algebra $D_{n,K}$.
One recovers $H^n(X_K,\omega)$ by applying the Mellin transform in the $x$-direction.

\end{remark}

\begin{example}[$n = 2, \ell = 3$] \label{ex:M05}
Consider the very affine surface $X = (\mathbb{C}^*)^2 \setminus V(f_1f_2f_3)$, where $f_1 = x-1, f_2 = y-1, f_3 = x-y$. This variety can be identified with the moduli space ${\cal M}_{0,5}$ of five points on $\mathbb{P}^1$ \cite[Section 2]{sturmfels2021likelihood}. Its real part is the complement of an arrangement of five lines in $\mathbb{R}^2$. By Varchenko's theorem \cite[Proposition 1]{sturmfels2021likelihood}, the Euler characteristic equals the number of bounded polygons in that complement, which is two. The generators of $J$ are
\begin{align} \label{eq:genM05}
1 - \sigma_{s_1}&(\sigma_{\nu_1} - 1), \quad 1 - \sigma_{s_2}(\sigma_{\nu_2} - 1), \quad  1 - \sigma_{s_3}(\sigma_{\nu_1} - \sigma_{\nu_2}), \\  &\sigma_{\nu_1}^{-1} \nu_1 - s_1 \sigma_{s_1} - s_3 \sigma_{s_3}, \quad \, \sigma_{\nu_2}^{-1} \nu_2 - s_2 \sigma_{s_2} + s_3 \sigma_{s_3}. \qedhere
\end{align}
\end{example}

Theorem \ref{thm:RmodJ} reduces computations in the cohomology $H^n(X_K,\omega) = {\cal O}(X_K)/V_K$ to computations in the difference ring $R$ modulo the left ideal $J$. 
Our algorithm in Section \ref{sec:5} is inspired by a generalization of commutative {Gr\"obner bases}, called \emph{border bases} \cite{mourrain1999new}. It makes use of the fact that a $K$-basis of ${\cal O}(X_K)/V_K$ is known a priori via Theorems \ref{thm:1intro} and \ref{thm:2intro}.

\section{Degeneration and likelihood ideals} \label{sec:3}

Theorem \ref{thm:RmodJ} expresses our cohomology vector space $H^n(X_K,\omega)$ as a quotient of a \emph{non-commutative} ring $R$ by a left ideal $J$. In this section, we introduce a degeneration which turns the cohomology into a quotient of the \emph{commutative} ring ${\cal O}(X_K)$ by the \emph{likelihood ideal}. For the moment, we will switch back to our general setting where $X_A$ is defined over a ring $A$, which is either $\C$ or contains $\C[s,\nu]$. Our degeneration is interesting for at least two reasons. First, it preserves bases in the sense of Theorem \ref{thm:2intro}, which allows us to compute a basis for $H^n(X_A,\omega)$ from a basis of the likelihood quotient. This section features a proof of Theorem \ref{thm:2intro}. Second, the degeneration provides new insights into the relation between critical points and twisted homology. This will be explored in Section \ref{sec:4}. We remark that our degeneration is much like a \emph{Gr\"obner deformation}, in the sense of \cite{SST}, which turns $R$ into its associated graded ring ${\cal O}(X_K)$. This was pointed out to us by Bernd Sturmfels.

The term \emph{likelihood} comes from maximum likelihood estimation, where one seeks to maximize the log-likelihood function $\log L(x)$, with $L(x)$ as in \eqref{eq:likelihood}.
The \emph{likelihood equations} are obtained by equating its partial derivatives with respect to $x_1, \ldots, x_n$ to zero.
This leads to $\omega = 0$, where $\omega \in \Omega^1(X_A)$ is as in \eqref{eq:omega}.
The critical points form a zero-dimensional subscheme of $X_A$, defined by an ideal $I_A\subset{\cal O}(X_A)$ called the \emph{likelihood ideal}:
\begin{equation} \label{eq:IA}
    I_A \, = \, \left \langle \frac{\nu_j}{x_j} -  s_1 \frac{\frac{\partial f_1}{\partial x_j}}{f_1} - \cdots -s_\ell \frac{\frac{\partial f_\ell}{\partial x_j}}{f_\ell}, \, \, j = 1, \ldots, n \right  \rangle \, \subset \, {\cal O}(X_A). 
\end{equation} 
Note that, for $A = K$, the similarity of these generators with \eqref{eqn:2} hints at a strong connection between ${\cal O}(X_K)/I_K$ and $H^n(X_K,\omega) = {\cal O}(X_K)/V_K$. This section explores that connection. 

Identifying ${\cal O}(X_A)$ with regular $n$-forms $\Omega^n(X_A)$, we observe that $I_A$ is the image of the map $\Omega^{n-1}(X_A) \rightarrow \Omega^n(X_A)$ given by $\phi \mapsto \omega \wedge \phi$. Together with \eqref{eq:cohomA}, this gives
\begin{equation} \label{eq:degen} 
H^n(X_A,\omega) \, = \, \frac{\Omega^n(X_A)}{({\rm d}+ \omega \wedge)\Omega^{n-1}(X_A)}, \quad {\cal O}(X_A)/I_A \, = \, \frac{\Omega^n(X_A)}{\omega \wedge \Omega^{n-1}(X_A) }.
\end{equation}
These equations are the ingredients to explain our intuition behind this section's degeneration. 
To go from left to right in \eqref{eq:degen}, it suffices to drop the `${\rm d}$' in the twisted differential. This motivates us to introduce a parameter $\d$ into the twisted de Rham complex \eqref{eq:cochain} as follows. We replace the twisted differential $\nabla_\omega = {\rm d} + \omega \wedge$ by $\nabla_\omega^\d = \d {\rm d} + \omega \wedge$. When $\d = 1$, we recover our original complex \eqref{eq:cochain}. When $\d = 0$, we obtain a complex of ${\cal O}(X_A)$-modules, which is the dual Koszul complex of the likelihood ideal \eqref{eq:IA} (more precisely, of its $n$ generators $g_1, \ldots, g_n$). Since $g_1, \ldots, g_n \in {\cal O}(X_A)$ form a regular sequence, this dual Koszul complex is a free resolution. This implies that all its cohomology modules are zero, except at level $n$, where it equals our likelihood quotient ${\cal O}(X_A)/I_A$. Below, we will make this more precise. 
\begin{remark}
    The deformation parameter $\delta$ corresponds to the reciprocal of the $\epsilon$-parameter in \emph{dimensional regularization} from physics (substitute $d=d_0-2\epsilon$ in formula (2.5) of \cite{lee2013critical}).
\end{remark}

To formally introduce the degeneration parameter $\d$ into our cochain complex, we add it to our field $K = \C(s,\nu)$. Since we want to analyze what happens near $\d = 0$, we choose to work over the power series ring $K[\![\d]\!]$. To simplify the notation, we will write $X_{\d} = X_{K[\![\d]\!]}$.
The \emph{$\d$-twisted differential} $\nabla_\omega^\d: \Omega^k(X_\d) \rightarrow \Omega^{k+1}(X_\d)$ is given by $\nabla_\omega^\d(\phi) = (\d{\rm d} + \omega \wedge)\, \phi$. Here regular $k$-forms $\Omega^k(X_\d)$ are defined as in \eqref{eq:regforms} with $A = K[\![\d]\!]$.
The $k$-th cohomology group of 
\begin{equation} \label{eq:degencomplex}
(\Omega^\bullet(X_\d), \nabla_\omega^\d) : \, 0 \longrightarrow \Omega^0(X_\d)\overset{\nabla_\omega^\d}{\longrightarrow} \Omega^1(X_\d) \overset{\nabla_\omega^\d}{\longrightarrow} \cdots \overset{\nabla_\omega^\d}{\longrightarrow} \Omega^n(X_\d) \longrightarrow 0
\end{equation}
is denoted by $H^k(X_\d,\omegah)$. Tensoring \eqref{eq:degencomplex} with the Laurent series $K(\!(\d)\!)$ we obtain the cochain complex $(\Omega^\bullet(X_{K(\!(\d)\!)}), \nabla_\omega^\d)$, with cohomology $H^k(X_{K(\!(\d)\!)},\omega_\d)$.  We will see below (Corollary 
\ref{cor:laurentseries}) that the dimension for $k = n$ is the signed Euler characteristic, which is reminiscent of Theorem \ref{thm:dimcohom}. For $A = K[\![\d]\!]$, the analogous statement is the following. 
\begin{theorem} \label{thm:rankdegen}
    The cohomology $H^n(X_\d, \omegah)$ is a free $K[\![\d]\!]$-module of rank $(-1)^n \cdot \chi(X_\C)$.
\end{theorem}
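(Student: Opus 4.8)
The plan is to treat $\omegah$-twisted cohomology as the top cohomology of the bounded complex $C^\bullet=(\Omega^\bullet(X_\d),\nabla_\omega^\d)$ over the discrete valuation ring $\mathcal{R}=K[\![\d]\!]$, and to pin down the $\mathcal{R}$-module $M:=H^n(C^\bullet)=H^n(X_\d,\omegah)$ by comparing its reduction modulo $\d$ with its localization inverting $\d$. Each term $\Omega^k(X_\d)=\Omega^k(X_K)\otimes_K\mathcal{R}$ is a free, hence flat, $\mathcal{R}$-module, so tensoring $0\to\mathcal{R}\xrightarrow{\cdot\,\d}\mathcal{R}\to K\to 0$ with $C^\bullet$ produces a short exact sequence of complexes $0\to C^\bullet\xrightarrow{\cdot\,\d}C^\bullet\to C^\bullet\otimes_{\mathcal{R}}K\to 0$, and I would read everything off its long exact cohomology sequence.

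For the special fibre, note that $C^\bullet\otimes_{\mathcal{R}}K=(\Omega^\bullet(X_K),\omega\wedge)$ is exactly the $\d=0$ complex discussed before the theorem: the dual Koszul complex of the regular sequence $g_1,\dots,g_n$ generating the likelihood ideal $I_K$. It is therefore acyclic outside degree $n$, where its cohomology is ${\cal O}(X_K)/I_K$, of dimension $(-1)^n\chi(X_\C)$ (the function-field form of Huh's count). Feeding $H^{n+1}(C^\bullet)=0$ and $H^{n-1}(C^\bullet\otimes_{\mathcal{R}}K)=0$ into the long exact sequence gives, on one side, $M/\d M\cong{\cal O}(X_K)/I_K$, and on the other side that the $\d$-torsion submodule $M[\d]$ (the image of the connecting map out of $H^{n-1}(C^\bullet\otimes_{\mathcal{R}}K)$) vanishes. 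Over the DVR $\mathcal{R}$, $M[\d]=0$ means $M$ is torsion-free, hence flat.

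For the generic fibre, flatness of $K(\!(\d)\!)$ over $\mathcal{R}$ gives $M\otimes_{\mathcal{R}}K(\!(\d)\!)=H^n(X_{K(\!(\d)\!)},\omega_\d)$. Over $K(\!(\d)\!)$ the scalar $\d$ is invertible, so $\nabla_\omega^\d=\d\,({\rm d}+\d^{-1}\omega\wedge)$ has the same kernels and images as the ordinary twisted differential of the form $\d^{-1}\omega$, whence $H^n(X_{K(\!(\d)\!)},\omega_\d)=H^n(X_{K(\!(\d)\!)},\d^{-1}\omega)$. But $\d^{-1}\omega$ is the one-form \eqref{eq:omega} with $(s,\nu)$ replaced by $(\d^{-1}s,\d^{-1}\nu)$; comparing coefficients of powers of $\d$ shows these $\ell+n$ elements of $K(\!(\d)\!)$ are algebraically independent over $\C$, so the subfield they generate is $\C$-isomorphic to $K=\C(s,\nu)$ by an isomorphism carrying $\d^{-1}\omega$ to $\omega$. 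Theorem \ref{thm:dimcohom} over that field, followed by flat base change up to $K(\!(\d)\!)$, gives $\dim_{K(\!(\d)\!)}M\otimes_{\mathcal{R}}K(\!(\d)\!)=(-1)^n\chi(X_\C)$.

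Finally I would invoke an elementary fact about a DVR $\mathcal{R}$ with residue field $k$ and fraction field $F$: if $M$ is torsion-free with $\dim_k M/\mathfrak{m}M=\dim_F M\otimes_{\mathcal{R}}F=r<\infty$, then $M$ is free of rank $r$. Indeed, lifting a $k$-basis of $M/\mathfrak{m}M$ to $m_1,\dots,m_r\in M$ yields $\mathcal{R}$-linearly independent elements spanning a free submodule $N\cong\mathcal{R}^r$; since $M$ and $N$ have the same rank, $M/N$ is a torsion module, and since $M$ is flat with $N\otimes_{\mathcal{R}}k\to M\otimes_{\mathcal{R}}k$ an isomorphism, $(M/N)\otimes_{\mathcal{R}}k=0$ and $\mathrm{Tor}_1^{\mathcal{R}}(M/N,k)=0$; a torsion $\mathcal{R}$-module with no $\d$-torsion is zero, so $M=N$. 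Applying this with $r=(-1)^n\chi(X_\C)$ finishes the proof. The crux, and the main obstacle, is precisely the step this lemma covers: because the $C^k$ are not finitely generated over $\mathcal{R}$, $M$ is not obviously finite, and torsion-freeness plus a finite special fibre would not by themselves rule out a $K(\!(\d)\!)$-summand in $M$. That is why the generic-fibre dimension must be computed directly, via the rescaling $\omega\mapsto\d^{-1}\omega$ and Theorem \ref{thm:dimcohom}; a secondary point is that the special and generic fibre dimensions must be seen to coincide, which is where the function-field analogue of Huh's count enters.
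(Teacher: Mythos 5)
Your argument is correct, and it reaches the same conclusion by a route that is genuinely different from the paper's. The paper first proves finite generation of $H^n(X_\d,\omega_\d)$ over $K[\![\d]\!]$ using the logarithmic compactification and the degeneration of the Hodge--de~Rham spectral sequence (Lemma \ref{lem:specseq d}); it then computes the generic fibre dimension $\dim_{K(\!(\d)\!)}H^n(X_{K(\!(\d)\!)},\omega_\d)$ by the same machinery (Corollary \ref{cor:laurentseries}), the special fibre dimension from Lemma \ref{lem:limithomol}, and closes with the structure theorem for finitely generated modules over a DVR (Lemma \ref{lem:commalg}). You sidestep Lemma \ref{lem:specseq d} and Corollary \ref{cor:laurentseries} altogether: the long exact sequence of $0\to C^\bullet\xrightarrow{\cdot\d}C^\bullet\to C^\bullet\otimes_{K[\![\d]\!]}K\to 0$, together with the Koszul acyclicity of the $\d=0$ complex below degree $n$, gives you torsion-freeness of $M$ and identifies $M/\d M$ with ${\cal O}(X_K)/I_K$ in one stroke; the generic fibre dimension is then recovered from the already-proved Theorem \ref{thm:dimcohom} via the unit rescaling $\nabla_\omega^\d=\d\cdot\nabla_{\d^{-1}\omega}$ over $K(\!(\d)\!)$ and the $\C$-isomorphism $\C(s/\d,\nu/\d)\simeq K$ — a trick the paper itself uses later, in \eqref{eq:field_extension} and \eqref{eq:varphi}; and you replace Lemma \ref{lem:commalg} by a hands-on lemma for torsion-free (not assumed finitely generated) modules over a DVR. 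Your version correctly identifies that torsion-freeness plus finite special fibre is not enough (it fails to exclude $K(\!(\d)\!)$-summands) and that the generic-fibre count is what kills them; that is exactly where both routes need the equality of the two fibre dimensions. What your route buys is a more self-contained, essentially homological-algebra proof that never needs finite generation or the compactification spectral sequence beyond what is already packaged in Theorem \ref{thm:dimcohom}; what the paper's route buys is uniformity, since Lemma \ref{lem:specseq d} is the direct $\d$-analogue of Lemma \ref{lem:specseq} and gives finite generation as a free by-product. Both are valid proofs.
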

Before proving Theorem \ref{thm:rankdegen}, it is convenient to formalize the notion of `driving $\d$ to 0'.
\begin{definition}
    For a $K[\![\d]\!]$-module $M$, we define the $K$-vector space $\lim_{\d \rightarrow 0} M = M/(\d \cdot M)$.
\end{definition}
\begin{example} \label{ex:limit}
    One checks that $\lim_{\d \rightarrow 0} \Omega^k(X_\d) = \Omega^k(X_K)$.
\end{example}
This justifies our claim that cohomology degenerates to the likelihood quotient: 
\begin{lemma} \label{lem:limithomol}
    The limit $\lim_{\d \rightarrow 0} H^n(X_\d, \omegah)$ is ${\cal O}(X_K)/I_K$ and has dimension $(-1)^n \cdot \chi(X_\C)$.
\end{lemma}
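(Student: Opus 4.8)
The plan is to analyze the complex $(\Omega^\bullet(X_\d), \nabla_\omega^\d)$ modulo $\d$ and combine this with the freeness statement of Theorem \ref{thm:rankdegen}. First I would observe that reducing the cochain complex \eqref{eq:degencomplex} modulo $\d$ yields, by Example \ref{ex:limit}, the complex $(\Omega^\bullet(X_K), \omega \wedge)$ in which the differential is simply wedging with $\omega$, with no derivation term. By \eqref{eq:degen}, the top cohomology of this reduced complex is exactly ${\cal O}(X_K)/I_K$. Since the $n$ generators $g_1,\dots,g_n$ of $I_K$ form a regular sequence in ${\cal O}(X_K)$ (as noted in the text preceding the statement — the critical locus is zero-dimensional and ${\cal O}(X_K)$ is Cohen–Macaulay, or one invokes Huh's result that $\dim_K {\cal O}(X_K)/I_K = |\chi(X)|$ together with the fact that $n$ equations cutting out a zero-dimensional scheme in an $n$-dimensional ring form a regular sequence), the complex $(\Omega^\bullet(X_K), \omega\wedge)$ is, up to sign twists in the identification $\Omega^k \simeq \bigwedge^k {\cal O}(X_K)^n$, the dual Koszul complex on $g_1,\dots,g_n$. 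Hence it is exact except in top degree, where its cohomology is ${\cal O}(X_K)/I_K$.

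Next I would transfer this to the statement about $\lim_{\d\to 0}$. The key point is that forming the limit $\lim_{\d\to 0}$, i.e.\ applying $- \otimes_{K[\![\d]\!]} K$, is right exact but not exact, so there is an a priori discrepancy between $\lim_{\d\to 0} H^n(X_\d,\omegah)$ and $H^n$ of the reduced complex. The clean way around this is to use Theorem \ref{thm:rankdegen}: since $H^n(X_\d, \omegah)$ is a \emph{free} $K[\![\d]\!]$-module of rank $(-1)^n\chi(X_\C)$, we have $\lim_{\d\to 0} H^n(X_\d,\omegah) = H^n(X_\d,\omegah)/\d \cdot H^n(X_\d,\omegah)$ is a $K$-vector space of dimension exactly $(-1)^n\chi(X_\C)$. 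So the dimension claim follows immediately from Theorem \ref{thm:rankdegen}, and it remains only to identify this limit with ${\cal O}(X_K)/I_K$ as a vector space (or module). For the identification, I would use a universal-coefficients / base-change spectral sequence argument: the complex $\Omega^\bullet(X_\d)$ consists of free $K[\![\d]\!]$-modules, so there is a short exact sequence relating $H^n$ of the reduced complex to $H^n(X_\d,\omegah)\otimes_{K[\![\d]\!]}K$ and a $\mathrm{Tor}_1$ term involving $H^{n+1}$. Since the complex is concentrated in degrees $0,\dots,n$ there is no $H^{n+1}$, so $\lim_{\d\to 0} H^n(X_\d,\omegah) \twoheadrightarrow H^n(\Omega^\bullet(X_K),\omega\wedge) = {\cal O}(X_K)/I_K$ surjects, and a dimension count (both sides have dimension $(-1)^n\chi(X_\C)$, the left by Theorem \ref{thm:rankdegen}, the right by the Koszul computation) forces this surjection to be an isomorphism.

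Concretely, the steps in order are: (i) reduce the $\d$-twisted complex mod $\d$ and identify it, via Example \ref{ex:limit} and \eqref{eq:degen}, with the dual Koszul complex of $g_1,\dots,g_n$ tensored appropriately; (ii) invoke the regular-sequence property to conclude this complex has cohomology only in top degree, equal to ${\cal O}(X_K)/I_K$, of dimension $(-1)^n\chi(X_\C)$ (Huh); (iii) use the right-exactness of $-\otimes_{K[\![\d]\!]}K$ together with the vanishing of $H^{n+1}$ to get a natural surjection $\lim_{\d\to 0}H^n(X_\d,\omegah)\twoheadrightarrow {\cal O}(X_K)/I_K$; (iv) use Theorem \ref{thm:rankdegen} (freeness) to compute $\dim_K \lim_{\d\to 0} H^n(X_\d,\omegah) = (-1)^n\chi(X_\C)$, matching the target dimension, so the surjection is an isomorphism.

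The main obstacle will be step (iii): making the base-change comparison precise. One has to be careful that $\lim_{\d\to0}$ commutes with taking cohomology only up to a correction term, and to argue cleanly that this correction vanishes (it does, because the complex terminates at degree $n$, so $\mathrm{Tor}_1^{K[\![\d]\!]}(H^{n+1},K) = 0$ trivially, and cokernels behave well). An alternative, perhaps cleaner, route that avoids spectral sequences entirely: since $K[\![\d]\!]$ is a PID (in fact a DVR) and all $\Omega^k(X_\d)$ are free, one can directly use the fact that for a bounded-above complex of free modules over a DVR, $H^n$ of the reduced complex fits in $0 \to H^n(C^\bullet)\otimes K \to H^n(C^\bullet\otimes K)\to \mathrm{Tor}_1(H^{n-1}(C^\bullet), K)\to 0$ is the wrong direction — rather the correct universal coefficient sequence is $0 \to H^n(C^\bullet)/\d \to H^n(C^\bullet/\d) \to (\d\text{-torsion of } H^{n+1}(C^\bullet)) \to 0$, and $H^{n+1} = 0$. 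Either way, once step (iii) is set up, step (iv) is purely formal given Theorem \ref{thm:rankdegen}.
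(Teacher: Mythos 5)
Your plan as written is circular. You lean on Theorem~\ref{thm:rankdegen} (freeness of $H^n(X_\d,\omegah)$) both to compute $\dim_K\lim_{\d\to 0}H^n(X_\d,\omegah)$ and to upgrade your surjection to an isomorphism, but in the paper the proof of Theorem~\ref{thm:rankdegen} is carried out \emph{after} and \emph{using} Lemma~\ref{lem:limithomol}: equation~\eqref{eq:arg1} in that proof is precisely the dimension statement of the lemma you are trying to prove. So steps (iii) and (iv) of your outline cannot invoke Theorem~\ref{thm:rankdegen} without a vicious circle.

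Fortunately, your own last paragraph contains the fix, and once you use it Theorem~\ref{thm:rankdegen} is unnecessary. The correct universal coefficient sequence for a cochain complex $C^\bullet$ of free $K[\![\d]\!]$-modules, applied with $k = K[\![\d]\!]/\d$, is
\[
0 \longrightarrow H^n(C^\bullet)\otimes_{K[\![\d]\!]} K \longrightarrow H^n(C^\bullet\otimes_{K[\![\d]\!]} K) \longrightarrow \mathrm{Tor}_1^{K[\![\d]\!]}(H^{n+1}(C^\bullet), K) \longrightarrow 0,
\]
so the natural map goes the opposite way from what you first wrote: it is an \emph{injection} $\lim_{\d\to 0}H^n(X_\d,\omegah)\hookrightarrow {\cal O}(X_K)/I_K$, not a surjection. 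Since the complex is bounded above at degree $n$, the $\mathrm{Tor}_1$ term vanishes and this injection is already an isomorphism, with no dimension count needed. The dimension then comes from Huh's theorem applied to ${\cal O}(X_K)/I_K$ directly (your step (ii)), not from Theorem~\ref{thm:rankdegen}. Your Koszul observation in step (ii) is in fact stronger than what is needed here; the paper only uses the ($n=$ top degree) quotient description from \eqref{eq:degen}.

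For comparison, the paper's proof avoids homological machinery entirely: it unwinds $\lim_{\d\to 0}H^n(X_\d,\omegah)$ as $\Omega^n(X_\d)/\bigl(\nabla_\omega^\d(\Omega^{n-1}(X_\d))+\d\,\Omega^n(X_\d)\bigr)$, then observes that the $\d\,\mathrm{d}$ part of $\nabla_\omega^\d$ is absorbed into $\d\,\Omega^n(X_\d)$, leaving $\omega\wedge\Omega^{n-1}(X_K)+\d\,\Omega^n(X_\d)$ in the denominator. This gives the identification with ${\cal O}(X_K)/I_K$ in two lines. That computation is the concrete, by-hand version of your $\mathrm{Tor}_1$-vanishing argument; both ultimately rest on the facts that $\Omega^k(X_\d)=K[\![\d]\!]\otimes_K\Omega^k(X_K)$ is free over $K[\![\d]\!]$ and that the complex is bounded in degree $n$.
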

\begin{proof}
    We first observe that our limit equals 
    \[ \lim_{\d \to 0} H^n(X_\d, \omegah) \, = \, \frac{H^n(X_\d, \omegah)}{\d \cdot H^n(X_\d,\omegah)} \, = \, \frac{\Omega^n(X_\d)}{\nabla_\omega^\d(\Omega^{n-1}(X_\d)) + \d \cdot \Omega^n(X_\d)}. \]
    Using $\Omega^k(X_\d) = K[\![\d]\!]\otimes_K \Omega^k(X_K)$, we see that the denominator on the right equals
    \[  (\d {\rm d} + \omega \wedge) \left (K[\![\d]\!]\otimes_K\Omega^{n-1}(X_K) \right ) + \d \cdot \Omega^n(X_\d) \, = \, \omega \wedge \Omega^{n-1}(X_K) + \d \cdot \Omega^n(X_\d).\]
    Together with Example \ref{ex:limit} this leads to $\lim_{\d \to 0}H^n(X_\d, \omegah) = \frac{\Omega^n(X_K)}{\omega \wedge \Omega^{n-1}(X_K)} = {\cal O}(X_K)/I_K$. The statement about the dimension follows from \cite[Theorem 1]{huh2013maximum}.
\end{proof}
Our proof of Theorem \ref{thm:rankdegen} will also use the following two lemmas.
\begin{lemma} \label{lem:commalg}
    Let $M$ be a finitely generated $K[\![\d]\!]$-module, with free part of rank $r$. We have
    \begin{enumerate}
        \item $\dim_{K(\!(\d)\!)} K(\!(\d)\!) \otimes_{K[\![\d]\!]} M = r$ and 
        \item $\dim_K \lim_{\d \to 0} M \geq r$, where equality holds if and only if $M$ is free.
    \end{enumerate}
\end{lemma}
\begin{proof}
    The proof requires only elementary commutative algebra. We present a sketch and leave details to the reader. For the first statement, the torsion part of $M$ is annihilated by the tensor product, since $K(\!(\d)\!) \otimes_{K[\![\d]\!]} K[\![\d]\!]/\langle \d^p \rangle = 0$ for any $p$. For the second statement, note that a torsion component $K[\![\d]\!]/\langle \d^p \rangle$ has nonzero contribution to the dimension $\dim_K \lim_{\d \to 0} M = \dim_K M/(\d \cdot M)$. Indeed, we have $\lim_{\d \to 0} K[\![\d]\!]/\langle \d^p \rangle = K$.
\end{proof}

The final lemma is an analog of Lemma \ref{lem:specseq} over $K[\![\d]\!]$ and $K(\!(\d)\!)$.
\begin{lemma}\label{lem:specseq d}
    If $A = K[\![\d]\!]$  or $A = K(\!(\d)\!)$, then the Hodge-to-de Rham spectral sequence 
   \begin{equation} \label{eq:specseq e}
    E^{p,q}_1 \, = \, \mathbb{H}^q(\bar{X}_A \, ; \, \Omega^p_{A,\log}(kD))\,\, \Rightarrow \, \, \mathbb{H}^{p+q}(\bar{X}_A \, ; \, (\Omega_{A,\log}^\bullet(kD),\nabla_\omega^\d))
\end{equation}
    degenerates at the $E_1$ stage for sufficiently large $k$.
    In particular, $\dim_{K(\!(\d)\!)} H^n(X_{K(\!(\d)\!)},\omega_\d)$ is
    \begin{equation}
    \dim_{K(\!(\d)\!)} \mathbb{H}^{n}(\bar{X}_{K(\!(\d)\!)};(\Omega_{K(\!(\d)\!),\log}^\bullet(kD),\nabla_\omega^\d)) \, = \, \sum_{p+q=n}\dim_{K(\!(\d)\!)}\mathbb{H}^q(\bar{X}_{K(\!(\d)\!)};\Omega^p_{K(\!(\d)\!),\log}(kD)).
\end{equation}
\end{lemma}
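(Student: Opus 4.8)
The plan is to follow the proof of Lemma \ref{lem:specseq}, treating the field $A=K(\!(\d)\!)$ first and then descending to the discrete valuation ring $A=K[\![\d]\!]$ by flat base change. The key observation for the field case is that when $\d$ is invertible one has $\nabla_\omega^\d=\d\,{\rm d}+\omega\wedge=\d\cdot\nabla_{\omega/\d}$, where $\nabla_{\omega/\d}={\rm d}+(\omega/\d)\wedge$ and $\omega/\d$ is a logarithmic one-form along $D$ with residue ${\rm Res}_{D_k}(s,\nu)/\d$ along $D_k$. Since ${\rm Res}_{D_k}(s,\nu)$ is a nonzero $\Z$-linear form in the algebraically independent parameters $s,\nu$, the element ${\rm Res}_{D_k}(s,\nu)/\d$ has $\d$-adic valuation $-1$ and hence is not an integer; so $\omega/\d$ satisfies the non-resonance condition underlying Lemma \ref{lem:specseq} over $K(\!(\d)\!)$, exactly as $\omega$ does over $K$.

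For $A=K(\!(\d)\!)$: multiplication by $\d^p$ in cohomological degree $p$ is a filtered chain isomorphism from $(\Omega^\bullet_{A,\log}(kD),\nabla_{\omega/\d})$ to $(\Omega^\bullet_{A,\log}(kD),\nabla_\omega^\d)$, so these two complexes have isomorphic Hodge-to-de Rham spectral sequences. Since $K(\!(\d)\!)$ is a field containing $K$ and $\omega/\d$ is non-resonant, the argument of Lemma \ref{lem:specseq}, which rests only on Grothendieck--Serre vanishing and non-resonance, applies over $K(\!(\d)\!)$ with $\omega$ replaced by $\omega/\d$; hence \eqref{eq:specseq e} degenerates at $E_1$ for $k\gg 0$. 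The dimension formula then follows once the abutment is identified: as in the proof of Theorem \ref{thm:dimcohom}, $H^n(X_{K(\!(\d)\!)},\omega_\d)=\mathbb{H}^n(\bar{X}_{K(\!(\d)\!)};(\Omega^\bullet_{K(\!(\d)\!)}(*D),\nabla_\omega^\d))$, and the inclusion $(\Omega^\bullet_{K(\!(\d)\!),\log}(kD),\nabla_\omega^\d)\hookrightarrow(\Omega^\bullet_{K(\!(\d)\!)}(*D),\nabla_\omega^\d)$ is a quasi-isomorphism for every $k$ by the argument of \cite[Properties 2.9]{esnault1992lectures}, again via rescaling to $\nabla_{\omega/\d}$.

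For $A=K[\![\d]\!]$ only the degeneration is asserted, and I transport it from the fibres. The sheaves $\Omega^p_{K[\![\d]\!],\log}(kD)$ on $\bar{X}_{K[\![\d]\!]}$ are locally free, hence flat over $K[\![\d]\!]$, and they base change to the corresponding sheaves on the closed fibre $\bar{X}_K$ and the generic fibre $\bar{X}_{K(\!(\d)\!)}$. By the Grothendieck--Serre vanishing theorem on the closed fibre (as in Lemma \ref{lem:specseq}), $\mathbb{H}^q(\bar{X}_K;\Omega^p_{K,\log}(kD))=0$ for $q\geq 1$ and $k\gg 0$; cohomology and base change then forces $R^q\pi_*\Omega^p_{\log}(kD)=0$ for $q\geq 1$ over all of ${\rm Spec}\,K[\![\d]\!]$, with $\pi:\bar{X}_{K[\![\d]\!]}\to{\rm Spec}\,K[\![\d]\!]$, and since the base is affine $E^{p,q}_1=\mathbb{H}^q(\bar{X}_{K[\![\d]\!]};\Omega^p_{\log}(kD))=0$ for $q\geq 1$. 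Thus \eqref{eq:specseq e} is concentrated in the row $q=0$, all differentials $d_r$ with $r\geq 2$ vanish for degree reasons, and degeneration at $E_1$ reduces to $d_1=0$. The surviving terms $E^{p,0}_1=H^0(\bar{X}_{K[\![\d]\!]};\Omega^p_{\log}(kD))$ are torsion-free over $K[\![\d]\!]$, being global sections of a flat sheaf over a domain, and flat base change identifies $E^{p,0}_1\otimes_{K[\![\d]\!]}K(\!(\d)\!)$ with the analogous term over $K(\!(\d)\!)$, compatibly with $d_1$. Since $d_1$ vanishes after inverting $\d$ by the $K(\!(\d)\!)$ case, and since a homomorphism of torsion-free $K[\![\d]\!]$-modules that becomes zero over $K(\!(\d)\!)$ is already zero, $d_1=0$; hence \eqref{eq:specseq e} degenerates at $E_1$ over $K[\![\d]\!]$ as well.

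I expect the main obstacle to be the descent to the non-field base $K[\![\d]\!]$: the dimension count over a field that makes Lemma \ref{lem:specseq} go through must be replaced by flatness and torsion-freeness, and one has to be sure that both the vanishing of the higher rows and the vanishing of $d_1$ genuinely propagate across ${\rm Spec}\,K[\![\d]\!]$ --- this is why I route through cohomology and base change rather than naively reducing modulo $\d$, since setting $\d=0$ discards the derivative term of $\nabla_\omega^\d$ entirely and does not detect $d_1$. By contrast, the rescaling $\nabla_\omega^\d=\d\,\nabla_{\omega/\d}$ and the non-resonance of $\omega/\d$ over $K(\!(\d)\!)$, which are the genuinely new ingredients relative to Lemma \ref{lem:specseq}, are routine.
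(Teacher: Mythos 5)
Your proof is correct, but it is considerably more elaborate than the paper's, which is the one-liner ``same as Lemma~\ref{lem:specseq}''. The key fact is that the Grothendieck--Serre vanishing theorem cited in Lemma~\ref{lem:specseq} applies over \emph{any} Noetherian base, not just a field; since $K[\![\d]\!]$ and $K(\!(\d)\!)$ are Noetherian and $\bar{X}_A \to \operatorname{Spec} A$ is projective, Serre vanishing gives $\mathbb{H}^q(\bar{X}_A;\Omega^p_{A,\log}(kD))=0$ for $q\geq 1$ and $k\gg 0$ \emph{directly} over $A$, so the $E_1$ page is concentrated in the row $q=0$ without the detour through the closed fibre. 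Your cohomology-and-base-change argument for $A=K[\![\d]\!]$ is therefore an unnecessary route to something already available in one stroke (though it is correctly executed; you rightly note that $\operatorname{Spec} K[\![\d]\!]$ has only one open set containing its closed point). Where your proof genuinely adds something the paper leaves implicit: the rescaling $\nabla_\omega^\d=\d\,\nabla_{\omega/\d}$ together with the observation that $\operatorname{Res}_{D_k}(s,\nu)/\d$ has $\d$-adic valuation $-1$ (hence is never an integer) makes explicit why whatever non-resonance hypothesis underlies Lemma~\ref{lem:specseq} persists over these larger coefficient rings; and the torsion-freeness argument that transports $d_1=0$ from $K(\!(\d)\!)$ to $K[\![\d]\!]$ addresses the bottom-row differential, which Serre vanishing alone does not kill. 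Be aware, though, that your vanishing of $d_1$ over $K(\!(\d)\!)$ is itself inherited from ``the argument of Lemma~\ref{lem:specseq}'' just as uncritically as the paper's citation, so on that one point you and the paper are at the same level of detail.
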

\noindent
The proof of this lemma is the same as that of Lemma \ref{lem:specseq}. The following is a consequence. 
\begin{corollary} \label{cor:laurentseries}
    The dimension of the $K(\!(\d)\!)$-vector space $H^n(X_{K(\!(\d)\!)},\omegah)$ is $(-1)^n \cdot \chi(X_\C)$.
\end{corollary}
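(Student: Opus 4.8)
The plan is to run exactly the same argument that proved Theorem \ref{thm:dimcohom}, but now over the field $A = K(\!(\d)\!)$ and with the twisted differential $\nabla_\omega^\d = \d\,{\rm d} + \omega\wedge$ in place of $\nabla_\omega$. First I would observe that, since $\d$ is a unit in $K(\!(\d)\!)$, the operator $\nabla_\omega^\d$ is conjugate (up to rescaling) to the honest twisted differential $\nabla_{\omega/\d} = {\rm d} + (\omega/\d)\wedge$: dividing $\nabla_\omega^\d$ by $\d$ gives ${\rm d} + (\omega/\d)\wedge$, and multiplication by $\d$ is an isomorphism of each $\Omega^k$, so the complexes $(\Omega^\bullet(X_{K(\!(\d)\!)}),\nabla_\omega^\d)$ and $(\Omega^\bullet(X_{K(\!(\d)\!)}),\nabla_{\omega/\d})$ have canonically isomorphic cohomology in every degree. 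Thus $H^n(X_{K(\!(\d)\!)},\omega_\d) \simeq H^n(X_{K(\!(\d)\!)}, \omega/\d)$, and the latter is a twisted de Rham cohomology of exactly the form treated in Section \ref{sec:2}, only with the twist parameters $s,\nu$ replaced by $s/\d, \nu/\d$, which are still (transcendental, hence generic) elements of the field $K(\!(\d)\!) \supset K = \C(s,\nu)$.

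Next I would invoke the hypercohomology comparison used in the proof of Theorem \ref{thm:dimcohom}: over any field $A$ of characteristic zero one has $H^n(X_A,\omega) = \mathbb{H}^n(\bar X_A;(\Omega_A^\bullet(*D),\nabla_\omega))$, and the canonical morphism from the log complex with poles bounded by $kD$ is a quasi-isomorphism, so this equals $\mathbb{H}^n(\bar X_A;(\Omega_{A,\log}^\bullet(kD),\nabla_\omega))$ for all $k$. Applying this with $A = K(\!(\d)\!)$ and the twist $\nabla_\omega^\d$ (equivalently $\nabla_{\omega/\d}$), Lemma \ref{lem:specseq d} gives the $E_1$-degeneration and hence
\[
\dim_{K(\!(\d)\!)} H^n(X_{K(\!(\d)\!)},\omega_\d) \, = \, \sum_{p+q=n} \dim_{K(\!(\d)\!)} \mathbb{H}^q(\bar X_{K(\!(\d)\!)};\Omega^p_{K(\!(\d)\!),\log}(kD))
\]
for $k \gg 0$. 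Finally I would argue, exactly as in the $A = K$ case of Theorem \ref{thm:dimcohom}, that these Hodge numbers are insensitive to the base field: $\mathbb{H}^q(\bar X_{K(\!(\d)\!)};\Omega^p_{K(\!(\d)\!),\log}(kD)) \simeq \mathbb{H}^q(\bar X_\C;\Omega^p_{\C,\log}(kD)) \otimes_\C K(\!(\d)\!)$ by flat base change along $\C \to K(\!(\d)\!)$, so the right-hand sum is $\sum_{p+q=n}\dim_\C \mathbb{H}^q(\bar X_\C;\Omega^p_{\C,\log}(kD)) = (-1)^n\chi(X_\C)$, where the last equality is the one already established inside the proof of Theorem \ref{thm:dimcohom}.

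I do not expect a genuine obstacle here: the corollary is essentially a corollary in the literal sense, obtained by feeding the new base field into the machinery of Lemma \ref{lem:specseq d} (whose proof, per the text, is identical to that of Lemma \ref{lem:specseq}). The only points that need a word of care are (i) checking that replacing $\omega$ by $\omega/\d$ really does land inside the hypotheses of Lemma \ref{lem:specseq d} — which is immediate, since that lemma is stated for $\nabla_\omega^\d$ directly — and (ii) making sure the quasi-isomorphism between the log complex and the meromorphic complex, which in the untwisted Section \ref{sec:2} argument relied on the genericity of $s,\nu$ in Definition \ref{def:generic}, still applies; over the field $K(\!(\d)\!)$ no genericity hypothesis is needed, just as none was needed for $A = K$, because the residues ${\rm Res}_{D_k}$ take values in $\d^{-1}\cdot(\text{nonzero }K\text{-combinations})$, which never lie in $\Z$. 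Alternatively, one can bypass (i)–(ii) entirely by deriving the statement from Theorem \ref{thm:rankdegen} together with part (1) of Lemma \ref{lem:commalg}: $H^n(X_{K(\!(\d)\!)},\omega_\d) = K(\!(\d)\!)\otimes_{K[\![\d]\!]} H^n(X_\d,\omega_\d)$ has dimension equal to the rank of the free $K[\![\d]\!]$-module $H^n(X_\d,\omega_\d)$, which is $(-1)^n\chi(X_\C)$ by Theorem \ref{thm:rankdegen}. I would include this second, shorter derivation as the main proof and relegate the spectral-sequence computation to a remark.
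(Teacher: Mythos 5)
Your main (spectral-sequence) argument is correct and is essentially what the paper does: rerun the proof of Theorem \ref{thm:dimcohom} with $A = K(\!(\d)\!)$ and $\nabla_\omega^\d$, invoking Lemma \ref{lem:specseq d} in place of Lemma \ref{lem:specseq}, noting that the residues acquire the invertible factor $\d^{-1}$ and hence never land in $\Z$, and concluding by flat base change on the Hodge numbers.

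However, the ``second, shorter derivation'' you propose to promote to the main proof is circular. In the paper, the proof of Theorem \ref{thm:rankdegen} uses Corollary \ref{cor:laurentseries} directly: the second equality in Equation \eqref{eq:arg2} is exactly Corollary \ref{cor:laurentseries}, and the conclusion of freeness of $H^n(X_\d,\omega_\d)$ over $K[\![\d]\!]$ is obtained from Lemma \ref{lem:commalg} by combining \eqref{eq:arg1} (the limit dimension) with \eqref{eq:arg2} (the generic-fiber dimension). If you replaced the spectral-sequence argument for Corollary \ref{cor:laurentseries} by an appeal to Theorem \ref{thm:rankdegen}, you would have to first establish freeness of $H^n(X_\d,\omega_\d)$ and compute its rank by some other means, which is precisely the missing ingredient. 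So you may mention the implication ``Theorem \ref{thm:rankdegen} together with Lemma \ref{lem:commalg}(1) implies Corollary \ref{cor:laurentseries}'' as an a posteriori consistency check, but it cannot serve as the proof; keep the spectral-sequence computation as the actual argument.
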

\begin{proof}
The proof is identical to that of Theorem \ref{thm:dimcohom}, replacing Lemma \ref{lem:specseq} by Lemma \ref{lem:specseq d}.
\end{proof}

\begin{proof}[Proof of Theorem \ref{thm:rankdegen}]
    We first show that $H^n(X_\d, \omegah)$ is finitely generated over $K[\![\d]\!]$.
    Because the residue of the $\d$-connection $(\mathcal{O}_{X_\d}(kD),\nabla_\omega^\d)$ along a component $D_j$ is ${\rm Res}_{D_j}(s,\nu)+k\d$, the natural morphism $(\Omega_{\d,\log}^\bullet(kD),\nabla_\omega^\d)\to (\Omega_{\d,\log}^\bullet(*D),\nabla_\omega^\d)$ is a quasi-isomorphism by the argument in the proof of \cite[Properties 2.9]{esnault1992lectures}.
    It follows that
    \[ H^n(X_\d,\omega_\d) \, \simeq \, \mathbb{H}^n(\bar{X}_\d \, ; \, (\Omega_{\d,\log}^\bullet(*D),\nabla_\omega^\d)) \, \simeq \, \mathbb{H}^{n}(\bar{X}_\d \, ; \, (\Omega_{\d,\log}^\bullet(kD),\nabla_\omega^\d)). \]
    By Lemma \ref{lem:specseq d}, this implies that $H^n(X_\d, \omegah)$ is finitely generated over $K[\![\d]\!]$.
    By Lemma \ref{lem:limithomol}, 
    \begin{equation} \label{eq:arg1}
    \dim_K \lim_{\d \to 0} H^n(X_\d, \omegah) = (-1)^n \cdot \chi(X_\C).
    \end{equation}
    Tensoring \eqref{eq:degencomplex} with $K(\!(\d)\!)$ we obtain \begin{equation} \label{eq:arg2}
    \dim_{K(\!(\d)\!)} K(\!(\d)\!) \otimes_{K[\![\d]\!]} H^n(X_\d,\omegah) \, = \,  \dim_{K(\!(\d)\!)} H^n(\Omega^\bullet(X_{K(\!(\d)\!)}), \nabla_\omega^\d) \, = \, (-1)^n \cdot \chi(X_\C),
    \end{equation}
    where the second equality is Corollary \ref{cor:laurentseries}. Lemma \ref{lem:commalg}, \eqref{eq:arg1} and \eqref{eq:arg2} show that $H^n(X_\d, \omegah)$ is free of rank $(-1)^n \cdot \chi(X_\C)$.
\end{proof}

Our next goal is to prove Theorem \ref{thm:2intro}. We first need to define \emph{constant bases}. Recall that $[g]_{I_K}$ denotes the residue class of $g \in {\cal O}(X_K)$ in the likelihood quotient ${\cal O}(X_K)/I_K$. 
\begin{definition} \label{def:constant}
A subset $\{\beta_1,\dots,\beta_\chi\}\subset \mathcal{O}(X_K)$ is said to represent a constant basis for $\mathcal{O}(X_K)/I_K$ if $\beta_1,\dots,\beta_\chi\in\mathcal{O}(X_\C)$ and $\{[\beta_1]_{I_K},\dots,[\beta_\chi]_{I_K}\}$ forms a $K$-basis of $\mathcal{O}(X_K)/I_K$.
\end{definition}
Our proof of Theorem \ref{thm:2intro} uses the notation $V_{A} = \nabla_\omega^\d(\Omega^{n-1}(X_A))$ for $A = K[\![\d]\!]$ or $K(\!(\d)\!)$.

\begin{proof}[Proof of Theorem \ref{thm:2intro}]
By assumption, $\{\beta_1,\dots,\beta_\chi\} \subset {\cal O}(X)={\cal O}(X_\C)$ and $\chi =|\chi(X)|$.
Consider the $K[\![\d]\!]$-submodule $N \subset H^n(X_\d,\omega_\d)$ generated~by $\{ [\beta_1]_{V_{K[\![\d]\!]}},\dots,[\beta_\chi]_{V_{K[\![\d]\!]}}\}$.
Here $[\beta_i]_{V_{K[\![\d]\!]}}$ is the residue class of $\beta_i \in {\cal O}(X) \subset {\cal O}(X_\d)$ in $H^n(X_\d, \omegah)$.
By our assumption, we have $\lim_{\d\to 0} H^n(X_\d,\omega_\d)/N=0$.
Nakayama's lemma implies $H^n(X_\d,\omega_\d)=N$.
By Equation \eqref{eq:arg2}, $H^n(X_{K(\!(\d)\!)} , \omegah)$ has dimension $\chi$, and therefore $\{ [\beta_1]_{V_{K(\!(\d)\!)}},\dots,[\beta_\chi]_{V_{K(\!(\d)\!)}}\}$ is a basis.
Next, {we consider a field extension $\iota:K\hookrightarrow K(\!(\d)\!)$ given by 
\begin{equation}\label{eq:field_extension}
K\ni a(s,\nu)\overset{\iota}{\mapsto} a(s/\delta,\nu/\delta)\in K(\!(\d)\!)
\end{equation}
and} define a morphism of $K(\!(\d)\!)$-vector spaces $\varphi: K(\!(\d)\!) \otimes_{K} H^n(X_K,\omega) \rightarrow H^n(X_{K(\!(\d)\!)},\omegah)$: 
\begin{equation}\label{eq:varphi}
     h \otimes [g(s,\nu)]_{V_K} \, \overset{\varphi}{\longmapsto} \, \left [  h \cdot  g \left ( \frac{s}{\d}, \frac{\nu}{\d} \right ) \right ]_{V_{K(\!(\d)\!)}}. 
\end{equation}
Note that $\varphi$ is well-defined: if $g(s,\nu) = \nabla_\omega(\phi(s,\nu))$ with $\phi \in \Omega^{n-1}(X_K)$, we have 
\[ h\cdot g\left( \frac{s}{\d}, \frac{\nu}{\d} \right ) \, = \, h \cdot \left ({\rm d} + \omega\left( \frac{s}{\d}, \frac{\nu}{\d} \right ) \wedge  \right) \phi \left ( \frac{s}{\d}, \frac{\nu}{\d} \right ) \, = \, \nabla_\omega^\d  \left ( \frac{h}{\d} \cdot  \phi \left ( \frac{s}{\d}, \frac{\nu}{\d} \right ) \right )  \, \in \, V_{K(\!(\d)\!)}. \]
Since $\varphi$ is a surjective map of equidimensional $K(\!(\d)\!)$-vector spaces (Theorems \ref{thm:dimcohom} and \ref{thm:rankdegen}), it is an isomorphism. 
Because $\beta_i \in {\cal O}(X)$, we have $[\beta_i]_{V_{K(\!(\d)\!)}} = \varphi(1 \otimes [\beta_i]_{V_K})$. It follows easily that $[\beta_1]_{V_K}, \ldots, [\beta_\chi]_{V_K}$ is a basis for $H^n(X_K,\omega)$. The theorem is proved. 
\end{proof}

The proof of Theorem \ref{thm:2intro} immediately implies the following corollary.
\begin{corollary}\label{cor:free_basis}
If $\{\beta_1,\dots,\beta_\chi\}\subset{\cal O}(X_K)$ represents a constant basis of ${\cal O}(X_K)/I_K$, then $\{[\beta_1]_{V_{K[\![\d]\!]}},\dots,[\beta_\chi]_{V_{K[\![\d]\!]}}\}$ is a free basis of $H^n(X_\d,\omega_\d)$.  
\end{corollary}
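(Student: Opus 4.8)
The plan is to extract the statement from the intermediate steps already carried out in the proof of Theorem \ref{thm:2intro}, adding only one routine observation about finitely generated modules over a local ring. In that proof, starting from a set $\{\beta_1,\dots,\beta_\chi\}\subset{\cal O}(X_\C)$ representing a constant basis of ${\cal O}(X_K)/I_K$, we introduced the $K[\![\d]\!]$-submodule $N\subseteq H^n(X_\d,\omega_\d)$ generated by $\{[\beta_1]_{V_{K[\![\d]\!]}},\dots,[\beta_\chi]_{V_{K[\![\d]\!]}}\}$, showed $\lim_{\d\to 0}\bigl(H^n(X_\d,\omega_\d)/N\bigr)=0$ using that the $[\beta_i]_{I_K}$ span $\lim_{\d\to 0}H^n(X_\d,\omega_\d)={\cal O}(X_K)/I_K$ (Lemma \ref{lem:limithomol}), and invoked Nakayama's lemma — applicable since $H^n(X_\d,\omega_\d)$ is finitely generated over the local ring $K[\![\d]\!]$, as established in the proof of Theorem \ref{thm:rankdegen} — to conclude $N=H^n(X_\d,\omega_\d)$. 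So the first step is just to record that these $\chi$ classes form a $K[\![\d]\!]$-generating set of $H^n(X_\d,\omega_\d)$.

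The second step upgrades ``generating set'' to ``free basis''. By Theorem \ref{thm:rankdegen}, $H^n(X_\d,\omega_\d)$ is a free $K[\![\d]\!]$-module of rank $\chi=(-1)^n\chi(X_\C)$. One then considers the surjection $K[\![\d]\!]^{\chi}\twoheadrightarrow H^n(X_\d,\omega_\d)$ sending the standard basis vectors to $[\beta_1]_{V_{K[\![\d]\!]}},\dots,[\beta_\chi]_{V_{K[\![\d]\!]}}$. After a choice of free basis of the target it is described by a square matrix $P$ over $K[\![\d]\!]$. Reducing modulo the maximal ideal $\d\,K[\![\d]\!]$ gives a surjective endomorphism of $K^\chi$, hence an isomorphism, so $\det P\not\equiv 0\pmod{\d}$; since $K[\![\d]\!]$ is local, $\det P$ is a unit, $P$ is invertible, and the surjection is an isomorphism. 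Therefore $\{[\beta_1]_{V_{K[\![\d]\!]}},\dots,[\beta_\chi]_{V_{K[\![\d]\!]}}\}$ is a free basis of $H^n(X_\d,\omega_\d)$. (Equivalently, one may simply cite the elementary fact that $\chi$ generators of a finitely generated free module of rank $\chi$ over a Noetherian local ring form a basis, in the spirit of Lemma \ref{lem:commalg}.)

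I expect no real obstacle here: both inputs — the Nakayama argument producing the generating set, and the freeness-of-rank-$\chi$ statement — are already available in the preceding proofs, and the remaining linear algebra over the local ring $K[\![\d]\!]$ is standard. The only point requiring a little care is to cite precisely which portion of the proof of Theorem \ref{thm:2intro} is being reused, rather than re-deriving it.
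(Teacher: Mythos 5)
Your proposal is correct and follows the paper's intended approach: the paper just asserts ``immediate from the proof of Theorem~\ref{thm:2intro}'', and you fill in precisely the two steps that proof supplies — the Nakayama argument giving the $\chi$ elements as a generating set of $H^n(X_\d,\omega_\d)$, plus Theorem~\ref{thm:rankdegen} giving freeness of rank $\chi$, from which the standard local-ring argument upgrades ``generators'' to ``free basis''. The only cosmetic remark is that your determinant argument can be shortcut by citing that a surjective endomorphism of a finitely generated module over a commutative ring is automatically injective, but your version over the local ring $K[\![\d]\!]$ is equally valid.
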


Finally, we link the degeneration back to Theorem \ref{thm:RmodJ} by bringing $\d$ into our ring of difference operators $R$.
We set $R_\d = K[\![\d]\!] \langle \s_{s_1}^{\pm 1}, \ldots, 
s_{s_\ell}^{\pm 1},\s_{\nu_1}^{\pm 1},\dots,\s_{\nu_n}^{\pm 1}\rangle$. Now $\d$ commutes with any element of $R_\d$, and the remaining commutator rules are as follows: 
\begin{equation} \label{eq:commutatorshbar}
  [\s_{s_i}, \s_{s_j}] = [\s_{s_i},\s_{\nu_j}]=[\s_{\nu_i},\s_{\nu_j}]=0, \, \,  \ [\s_{s_i},s_j]= \delta_{ij} \d \s_{s_j}, \, \, [\s_{\nu_i},\nu_j]=\delta_{ij} \d \s_{\nu_i}.
\end{equation}
Note that when $\d = 1$, we recover the relations \eqref{eq:commutators}. On the other hand, when $\d = 0$, $R_0$ is the (commutative!) coordinate ring of the $(n + \ell)$-dimensional algebraic torus $T_K^{n + \ell} = (K \setminus \{0\})^{n + \ell}$.
The action of $R_\d$ on $g(s,\nu) \in \Omega^n(X_\d)$, generalizing \eqref{eq:Raction}, is
\begin{equation} \label{eq:Ractionhbar}
  \s_{s_i} \bullet g(s,\nu) =f_i^{-1} \cdot g(s+ \d \cdot e_i,\nu) ,\quad \s_{\nu_j} \bullet g(s,\nu) = x_j \cdot g(s,\nu+\d \cdot e_j).
\end{equation}
This makes $\Omega^n(X_\d)$ a left $R_\d$-module.
Also, \eqref{eq:Ractionhbar} is well-defined on cohomology, as
\[ \sigma_{s_i} \bullet \nabla_\omega^\d(f^a x^b {\rm d}x_{\widehat{j}}) = \nabla_\omega^\d(f^{a-e_i}x^b {\rm d}x_{\widehat{j}})\quad \text{and} \quad \sigma_{\nu_k} \bullet \nabla_\omega^\d(f^a x^b {\rm d}x_{\widehat{j}}) = \nabla_\omega^\d(f^{a}x^{b+e_k} {\rm d}x_{\widehat{j}} ). \]
Hence, also $H^n(X_\d,\omegah)$ is a left $R_\d$-module. Here is a version of Theorem \ref{thm:RmodJ} in this setting. 
\begin{theorem} \label{thm:RmodJhbar}
    As a left $R_\d$-module, the cohomology $H^n(X_\d,\omega_\d)$ is isomorphic to the quotient $R_\d/J_\d$ of $R_\d$ by the left ideal $J_\d \subset R_\d$ generated by \eqref{eqn:2}.
  Moreover, the isomorphism sends the residue class of $\s_s^a \s_{\nu}^b$ to $[f^{-a}x^b\frac{dx}{x}] \in H^n(X_\d, \omega_\d)$. In particular, $R_\d/J_\d \simeq H^n(X_\d,\omegah)$ as $K[\![\d]\!]$-modules and $\lim_{\d \to 0} R_\d/J_\d = {\cal O}(X_K)/I_K$.
\end{theorem}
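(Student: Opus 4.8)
The plan is to exhibit the claimed map directly, reduce the assertion to a finite‑generation statement together with two dimension counts, settle the counts, and obtain finite generation by deforming the $D$‑module argument behind Theorem \ref{thm:RmodJ}.

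First I would define the $R_\d$‑module homomorphism $\pi\colon R_\d/J_\d \to H^n(X_\d,\omega_\d)$ as the $R_\d$‑linear extension of $1 \mapsto [\tfrac{{\rm d}x}{x}]$, so that $\s_s^a\s_\nu^b \mapsto \s_s^a\s_\nu^b\bullet[\tfrac{{\rm d}x}{x}] = [f^{-a}x^b\tfrac{{\rm d}x}{x}]$ by \eqref{eq:Ractionhbar}. For $\pi$ to be well defined one must check that each of the $n+\ell$ generators in \eqref{eqn:2} annihilates $[\tfrac{{\rm d}x}{x}]$. For $1-\s_{s_i}f_i(\s_\nu)$ this already holds at the chain level, since $f_i(\s_\nu)\bullet\tfrac{{\rm d}x}{x}=f_i\tfrac{{\rm d}x}{x}$ and then $\s_{s_i}\bullet(f_i\tfrac{{\rm d}x}{x})=\tfrac{{\rm d}x}{x}$. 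For $\s_{\nu_j}^{-1}\nu_j-\sum_i s_i\s_{s_i}\tfrac{\partial f_i}{\partial x_j}(\s_\nu)$ a short computation using $\s_{\nu_j}^{-1}\nu_j=(\nu_j-\d)\s_{\nu_j}^{-1}$ gives the class $\big(\tfrac{\nu_j-\d}{x_j}-\sum_i s_i\tfrac{\partial f_i/\partial x_j}{f_i}\big)\tfrac{{\rm d}x}{x}$, which by the $\d$‑analogue of Example \ref{ex:expand} equals $\nabla_\omega^\d\big((-1)^{j-1}(x_1\cdots x_n)^{-1}{\rm d}x_{\widehat{\jmath}}\big)\in V_{K[\![\d]\!]}$. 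Surjectivity of $\pi$ is clear because the $[f^{-a}x^b\tfrac{{\rm d}x}{x}]$ span $H^n(X_\d,\omega_\d)$ over $K[\![\d]\!]$.

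Next I would reduce "$\pi$ is an isomorphism" to the statement that $R_\d/J_\d$ is a \emph{free} $K[\![\d]\!]$‑module of rank $\chi=(-1)^n\chi(X_\C)$: by Theorem \ref{thm:rankdegen}, $H^n(X_\d,\omega_\d)$ is free of that rank, and a surjection between free $K[\![\d]\!]$‑modules of the same finite rank is an isomorphism (the representing square matrix is surjective mod $\d$, hence invertible mod $\d$, hence a unit over the discrete valuation ring $K[\![\d]\!]$). By Lemma \ref{lem:commalg} it then suffices to prove: (a) $R_\d/J_\d$ is finitely generated over $K[\![\d]\!]$; (b) $\dim_{K(\!(\d)\!)}\big(R_\d/J_\d\otimes_{K[\![\d]\!]}K(\!(\d)\!)\big)=\chi$; (c) $\dim_K\big(R_\d/J_\d\,/\,\d\cdot(R_\d/J_\d)\big)=\chi$. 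Item (c) is a direct computation that also yields the last claim of the theorem: at $\d=0$ the ring $R_0$ is the commutative Laurent ring $K[\s_s^{\pm1},\s_\nu^{\pm1}]$, the relations $1-\s_{s_i}f_i(\s_\nu)$ make $\s_{s_i}$ a two‑sided inverse of $f_i(\s_\nu)$, and eliminating the $\s_{s_i}$ identifies $R_0$ modulo the first $\ell$ relations with $\mathcal{O}(X_K)$ via $\s_{\nu_j}\leftrightarrow x_j$, $\s_{s_i}\leftrightarrow f_i^{-1}$; under this identification the remaining $n$ relations of \eqref{eqn:2} become precisely the generators of the likelihood ideal \eqref{eq:IA}. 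Thus $\lim_{\d\to0}R_\d/J_\d=R_0/J_0=\mathcal{O}(X_K)/I_K$, of dimension $\chi$ by \cite[Theorem~1]{huh2013maximum}. For (b) I would pass to $K(\!(\d)\!)$, where $\d$ is invertible, and rescale $s_i\mapsto s_i/\d$, $\nu_j\mapsto\nu_j/\d$: this turns \eqref{eq:commutatorshbar} into \eqref{eq:commutators} and \eqref{eqn:2} into its $\d=1$ form, so $R_\d/J_\d\otimes K(\!(\d)\!)$ becomes the module $R/J$ of Theorem \ref{thm:RmodJ} over the field $K(\!(\d)\!)$, and that theorem together with Theorem \ref{thm:dimcohom} gives dimension $\chi$.

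The main obstacle is (a): a priori $R_\d/J_\d$ is only generated over $K[\![\d]\!]$ by the infinitely many monomials $\s_s^a\s_\nu^b$, and — as the module $K(\!(\d)\!)/K[\![\d]\!]$ shows — finite generation does not follow formally from (b) and (c). I would obtain it by carrying out the proof of Theorem \ref{thm:RmodJ} in the Rees (``Gr\"obner'') deformation alluded to in the remark above: replace the Weyl algebra $D_{\ell+n}$ by its Rees algebra for the order filtration, whose algebraic Mellin transform is exactly $R_\d$, and replace the local‑cohomology module $M$ by its Rees deformation with respect to a good filtration. The push‑forward and Kashiwara‑equivalence steps of that proof go through over $K[\![\d]\!]$, and coherence is preserved because the filtrations are good; this simultaneously shows that $R_\d/J_\d$ is a coherent — hence finitely generated — $K[\![\d]\!]$‑module and reproves that it is isomorphic to $H^n(X_\d,\omega_\d)$ as a left $R_\d$‑module with $\s_s^a\s_\nu^b\mapsto[f^{-a}x^b\tfrac{{\rm d}x}{x}]$. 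The remaining ``in particular'' clauses are then immediate: restriction of scalars gives the $K[\![\d]\!]$‑module isomorphism, and $\lim_{\d\to0}R_\d/J_\d=\mathcal{O}(X_K)/I_K$ was established in step (c) (and also follows from Lemma \ref{lem:limithomol}).
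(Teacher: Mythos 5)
Your overall plan matches the paper's: define the natural surjection $\pi\colon R_\d/J_\d\to H^n(X_\d,\omega_\d)$ sending $\s_s^a\s_\nu^b\mapsto[f^{-a}x^b\tfrac{{\rm d}x}{x}]$, reduce to showing $R_\d/J_\d$ is a free $K[\![\d]\!]$-module of rank $\chi$ (after which surjectivity between free modules of equal finite rank over a local ring forces an isomorphism), and obtain freeness from finite generation plus the two dimension counts via Lemma~\ref{lem:commalg}. Your computation of the generic fibre by rescaling $(s,\nu)\mapsto(s/\d,\nu/\d)$ and invoking Theorem~\ref{thm:RmodJ} is exactly the paper's argument via the field extension \eqref{eq:field_extension}. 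Your explicit verification that the generators \eqref{eqn:2} annihilate $[\tfrac{{\rm d}x}{x}]$, and the direct identification $R_0/J_0\simeq\mathcal{O}(X_K)/I_K$ by eliminating the $\s_{s_i}$, are both correct and usefully fill in details the paper states without calculation.

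The one place where you take a genuinely different route is finite generation of $M_\d=R_\d/J_\d$ over $K[\![\d]\!]$. You correctly observe that it does not follow formally from the two dimension counts, and you propose to establish it by redoing the $D$-module proof of Theorem~\ref{thm:RmodJ} in a Rees/Gr\"obner deformation over $K[\![\d]\!]$. The paper instead invokes a much shorter commutative-algebra argument: $K[\![\d]\!]$ is a complete local ring, and by Matsumura's Theorem~8.4, if $\bigcap_n\d^nM_\d=0$ and $M_\d/\d M_\d$ is finite-dimensional, then $M_\d$ is finitely generated; the second hypothesis is your item~(c), and the separation hypothesis $\bigcap_n\d^nM_\d=0$ is asserted. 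So the paper gets finite generation from item~(c) plus one additional (unproved in the text) $\d$-adic separation claim, rather than from items~(b) and~(c). Your proposed Rees deformation would be self-contained and bypass the separation claim, but it is a substantially heavier undertaking than what the paper does, and as written it remains a sketch: making ``coherence is preserved because the filtrations are good'' rigorous through the Mellin transform and Kashiwara equivalence over $K[\![\d]\!]$ would be real work. If you want a short route, you should look for (or supply a proof of) $\bigcap_n\d^nM_\d=0$ and then apply the completeness argument; if you want a conceptual route, your Rees sketch is the right idea but needs to be carried out.
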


\begin{proof}
We set $M_\d=R_\d/J_\d$.
Since $\bigcap_{n=0}^\infty \d^nM_\d=0$ and $\dim_K \lim_{\d\to0} M_\d=\dim_K {\cal O}(X_K)/I_K=\chi<\infty$, $M_\d$ is finitely generated by \cite[Theorem 8.4]{Matsumura}.
On the other hand, one can prove that the naturally induced morphism $\psi:K(\!(\d)\!)\otimes_{K[\![\d]\!]}M_\d\to H^n(X_{K(\!(\d)\!)},\omega_\d)$ is an isomorphism.
This is because $\psi$ is the base extension of the isomorphism $R/J\to H^n(X_K;\omega)$ of Theorem \ref{thm:RmodJ} via the field extension $\iota:K\hookrightarrow K(\!(\d)\!)$ given by \eqref{eq:field_extension}.
It follows that $M_\d$ is a free $K[\![\d]\!]$-module by Lemma \ref{lem:commalg}.
The theorem follows from the obvious fact that the morphism $M_\d\to H^n(X_\d;\omega_\d)$ is surjective.
\end{proof}

We end the section by illustrating some of its results in a one-dimensional example.
\begin{example}\label{exa:cubic}
We take $n=\ell=1$ and $f(x)=1-x^3$.
As a basis of the likelihood quotient, we take $\{ [1]_{I_K},[x]_{I_K},[x^2]_{I_K}\}\subset\mathcal{O}(X_K)/I_K$.
By Theorem \ref{thm:2intro}, $\{[\frac{{\rm d}x}{x}],[{{\rm d}x}],[x{{\rm d}x}]\}$ is a $K$-basis of $H^1(X_K,\omega)$.
Through the isomorphism $H^1(X_K,\omega)\simeq R/J$ of Theorem \ref{thm:RmodJ}, it corresponds to a set $\{ [1],[\s_\nu],[\s_\nu^2]\}\subset R/J$.
On the other hand, Corollary \ref{cor:free_basis} implies that the set $\{ [1],[\s_\nu],[\s_\nu^2]\}$ is a free basis of $R_\delta/J_\delta\simeq H^1(X_\d,\omega_\d)$.
The representation matrix of the $K[\![\d]\!]$-linear map $\s_\nu:H^1(X_\d,\omega_\d)\to H^1(X_\d,\omega_\d)$ is given by 
\begin{equation}\label{eq:multiplication}
    \left(
    \begin{array}{ccc}
0& 0& \frac{ {\nu}}{   {\nu}-3s+ 3\d} \\
 1& 0& 0 \\
0&  1& 0 \\
\end{array}\right).
\end{equation}
We will show how to compute such matrices in general in Section \ref{sec:5} (Algorithm \ref{alg:better_alg}). 
Taking the limit $\d\to 0$ in \eqref{eq:multiplication}, the matrix \eqref{eq:multiplication} converges to the representation matrix of the multiplication map $\mathcal{O}(X_K)/I_K\to \mathcal{O}(X_K)/I_K$ which sends $[g]_{I_K}$ to $[x\cdot g]_{I_K}$, with respect to the basis $\{ [1]_{I_K},[x]_{I_K},[x^2]_{I_K}\}$.
The eigenvalues of this matrix for $\d \to 0$ are solutions to the likelihood equation $\omega=0$, see Theorem \ref{thm:multiplication}.
\end{example}

\section{Perfect pairings}
\label{sec:4}

A main goal in this paper is to compare $H^n(X,\omega)$ to the likelihood quotient ${\cal O}(X)/I$. This section illustrates how the degeneration in Section \ref{sec:3} turns certain bilinear pairings between $H^n(X,\omega)$ and its dual into pairings between ${\cal O}(X)/I$ and its dual. On the non-commutative side, i.e.~the side of $H^n(X,\omega)$, we will discuss \emph{period pairings} and \emph{intersection pairings}. When $\delta \rightarrow 0$, these turn into \emph{evaluation pairings} and \emph{Grothendieck residue pairings} respectively. 

\subsection{Period pairing}
Dual to the twisted cohomology $H^n(X,\omega) = H^n(X_\C,\omega)$ is the \emph{twisted homology} $H_n(X,-\omega)$.
Its elements $[\Gamma] \in H_n(X,-\omega)$ are \emph{twisted cycles}. A representative $\Gamma$ of $[\Gamma]$ is a singular cycle, together with a choice of a branch of the likelihood function $L(x)$ on $\Gamma$.
The minus sign in the notation $H_n(X,-\omega)$ is justified by the observation that these branches are local solutions $\phi$ to the differential equation $\nabla_{-\omega}(\phi) = ({\rm d} - \omega \wedge) \phi = 0$. See \cite[Chapter 2]{aomoto2011theory}, or \cite[Section 2]{agostini2022vector} for more details.
The \emph{period pairing} between cohomology and homology leads to the \emph{generalized Euler integrals} mentioned in the introduction, including marginal likelihood integrals and Feynman integrals. We denote this period pairing by $\langle \cdot, \cdot \rangle_{\rm per}^\omega$, and use the short notation $\langle g^+, \Gamma^- \rangle_{\rm per}^\omega = \langle [g^+]_V, [\Gamma^{-}] \rangle_{\rm per}^\omega$. The signs in this notation record the fact that cohomology is defined with the twist `$+\omega\wedge$', and homology with `$-\omega \wedge$'. The definition of the period pairing is $\langle \cdot, \cdot \rangle_{\rm per}^\omega: H^n(X,\omega) \times H_n(X,-\omega) \rightarrow \C$, with
\begin{equation} \label{eq:per}
\langle g^+, \Gamma^- \rangle_{\rm per}^\omega \, = \, \int_{\Gamma^-} g^+(x) \cdot L(x) \, \frac{{\rm d} x_1}{x_1} \wedge \cdots \wedge  \frac{{\rm d} x_n}{x_n}.  
\end{equation}
Here the likelihood function $L(x) = f^{-s}x^\nu$ is as in \eqref{eq:likelihood}, and the dependence on $\omega$ is through $L = {\rm exp}(\log L) = {\rm exp}(\int \omega)$. Note that $\langle \cdot, \cdot \rangle_{\rm per}^\omega$ is a $\C$-bilinear map on $H^n(X,\omega) \times H_n(X,-\omega)$. 
The period pairing is \emph{perfect}, meaning that it identifies $H^n(X,\omega)$ as the vector space dual of $H_n(X,-\omega)$, and vice versa. Throughout the section, when we work over $\C$, we assume genericity of $s, \nu$ as in Definition \ref{def:generic}, and we set $\chi =  (-1)^n \cdot \chi(X)$. When a basis $[\beta_1^+]_V,\ldots, [\beta_\chi^+]_V$ for $H^n(X,\omega)$ and a basis $[\Gamma_1^-], \ldots, [\Gamma_\chi^-]$ for $H_n(X,-\omega)$ are fixed, the period pairing is represented by a square matrix $P(\omega)$ of size $\chi \times \chi$, see Theorem \ref{thm:dimcohom}. Its entries~are 
\[ P(\omega)_{ij} \, = \,  \langle b_i^+,\Gamma_j^- \rangle_{\rm per}^\omega. \]
Flipping the sign of $\omega$ in cohomology and homology, we also have a period pairing $\langle \cdot, \cdot \rangle_{\rm per}^{-\omega}: H^n(X,-\omega) \times H_n(X,\omega) \rightarrow \C$, so that the formula for $\langle g^-, \Gamma^+ \rangle_{\rm per}^{-\omega}$ is similar to \eqref{eq:per}: 
\begin{equation} \label{eq:-per}
\langle g^-, \Gamma^{+} \rangle_{\rm per}^{-\omega} \, = \, \int_{\Gamma^{+} } g^{-}(x) \cdot L(x)^{-1} \, \frac{{\rm d} x_1}{x_1} \wedge \cdots \wedge  \frac{{\rm d} x_n}{x_n}.
\end{equation}
Here $[g^-]_{V(-\omega)} \in H^n(X,-\omega)$, where $V(-\omega) \simeq \nabla_{-\omega}(\Omega^{n-1}(X))$ is defined for $-\omega$ as $V$ was defined for $\omega$, $\Gamma^+ \in H_n(X,\omega)$ and $L(x)$ is as in \eqref{eq:likelihood}.
We fix a basis $[\beta_1^-]_{V(-\omega)},\ldots, [\beta_\chi^-]_{V(-\omega)}$ for $H^n(X,-\omega)$ and a basis $[\Gamma_1^+], \ldots, [\Gamma_\chi^+]$ for $H_n(X,\omega)$ to obtain a matrix $P(-\omega)_{ij} = \langle b_i^-, \Gamma_j^+\rangle_{\rm per}^{-\omega}$.


\subsection{Lefschetz thimbles and intersection pairing}
We will now fix basis cycles $\Gamma_j^-$ and $\Gamma_j^+$, using a construction from Morse theory explained in \cite{aomoto2011theory,witten2011analytic}. These cycles are called \emph{Lefschetz thimbles} or \emph{Lagrangian cycles}. This works nicely under some mild assumptions on $L(x)$, which we will now specify. Let \[ {\rm Hess}(x) \, = \, \det \left ( \frac{\partial^2 \log L(x) }{\partial x_i \partial x_j} \right) \] be the Hessian determinant of $\log L(x)$. Equivalently, ${\rm Hess}(x)$ is the Jacobian determinant
of the $n$ generators in \eqref{eq:IA}. Let $\log L(x) = G(x) + \sqrt{-1} \cdot H(x)$, with $G$ and $H$ real valued. 
\begin{assumption} \label{assum:lefschetz}
    The parameters $s, \nu$ are generic in the sense of Definition \ref{def:generic}. The log-likelihood function $\log L(x)$ has $\chi$ critical points $x^{(1)}, \ldots, x^{(\chi)}$, and $h_j = {\rm Hess}(x^{(j)}) \neq 0$ for $j = 1, \ldots, \chi$. Moreover, the values $H(x^{(j)}), j = 1, \ldots, \chi$ are distinct, and so are the $G(x^{(j)})$.
\end{assumption}
This gives a notion of \emph{genericity} for $s,\nu$ which is slightly stronger than Definition \ref{def:generic}. We will make Assumption \ref{assum:lefschetz} throughout the rest of the section.
The construction of the Lefschetz thimbles is classical, but quite technical. It is explained at length in \cite[Section 4.3]{aomoto2011theory}. The summary is as follows. 
By the Cauchy-Riemann equations, the real critical points of $(u,v) \mapsto G(u + \sqrt{-1}\cdot v)$ coincide with the complex critical points $x = u + \sqrt{-1}\cdot v$ of $\log L(x)$. We denote these critical points by $x^{(1)}, \ldots, x^{(\chi)}$. The function $G$ defines a vector field on $X$ which is a slight modification of its gradient field \cite[\S 4.3.4]{aomoto2011theory}. Along trajectories $x(t)$ of this field, $G$ increases and $H$ is~constant:
\begin{equation} \label{eq:decrease}
\frac{{\rm d} G(x(t))}{{\rm d} t} \, > \, 0 , \quad \frac{{\rm d} H(x(t))}{{\rm d} t} \, = \, 0.
\end{equation}
For each critical point $x^{(j)}$, the Lefschetz thimbles $\Gamma_j^-$ and $\Gamma_j^+$ are unions of~trajectories:
\[ \Gamma_j^{\mp} \, = \, \left \{ x_0 \in X \, : \, \begin{matrix} \text{the trajectory $x(t)$ with $x(0) = x_0$} \text{ satisfies $\lim_{t \rightarrow \pm \infty} x(t) = x^{(j)}$}
\end{matrix} 
\right \}.\]
Both $\Gamma_j^-$ and $\Gamma_j^+$ are $n$-dimensional real manifolds containing $x^{(j)}$. By \eqref{eq:decrease} and Assumption \ref{assum:lefschetz}, they do not contain any of the other critical points $x^{(i)}, i \neq j$. When restricted to $\Gamma_j^-$ ($\Gamma_j^+$), $G$ reaches a maximum (minimum) at $x^{(j)}$, and tends to $-\infty$ ($+\infty$) away from $x^{(j)}$. Intuitively, this explains why the integrals \eqref{eq:per} and \eqref{eq:-per} converge when $\Gamma^\mp = \Gamma_j^{\mp}$. 

\begin{example}[$n=1$] \label{ex:lefschetz}
Figure \ref{fig:lef} visualizes (part of) the Lefschetz thimbles for the data
\begin{equation} \label{eq:exlef}
n = \ell = 1,\qquad f = 1-x^3, \qquad s= 1/2 + 3 \sqrt{-1}, \qquad \nu = 1/7 + 7 \sqrt{-1}.
\end{equation}
\begin{figure}
    \centering
    \includegraphics[width = 7cm]{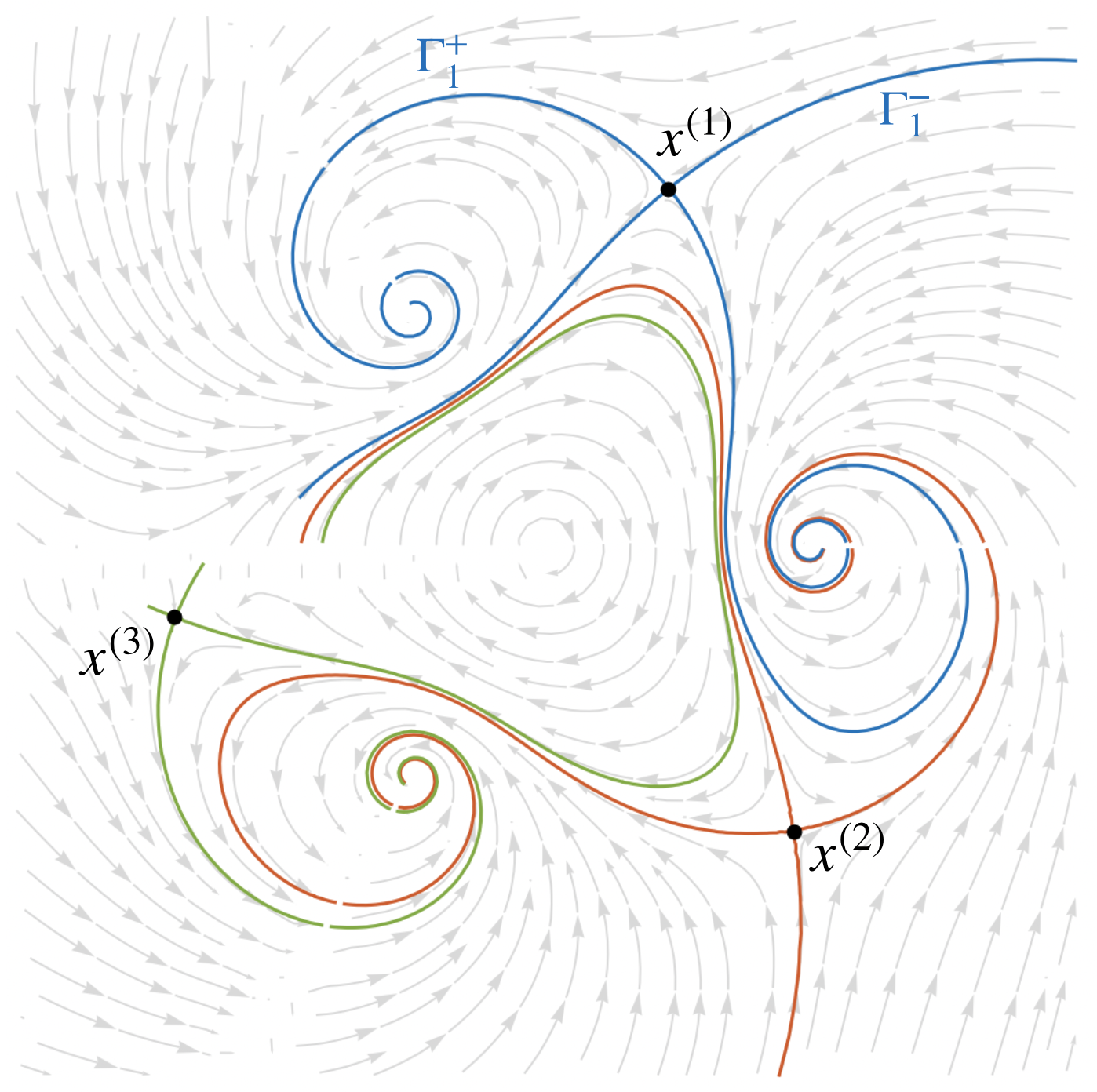}
    \caption{Lefschetz thimbles for the data from \eqref{eq:exlef}.}
    \label{fig:lef}
\end{figure}
The \texttt{Mathematica} code used to generate this picture is available at  \url{https://mathrepo.mis.mpg.de/TwistedCohomology}. The Euler characteristic of $X$ is $\chi(X) = -\chi = - 3$. The three critical points of $\log L(x)$ are the black dots in the picture. The thimbles $\Gamma_j^+$ and $\Gamma_j^-$ are shown in the same color (blue, orange or green), for $j = 1, 2, 3$. They are flow lines of a vector field that is a scaled version of the gradient field of $G(x) = {\rm Re}(\log f^{-s}x^\nu)$ \cite[\S 4.3.4]{aomoto2011theory}, visualized in the background of the figure. The points flowing \emph{towards}/\emph{away from} the critical point are $\Gamma_j^-/\Gamma_j^+$.
\mychange{The thimbles connect the stationary points $x^{(i)}$ of the gradient field with some of the points $\{ 0,1,\frac{-1\pm\sqrt{-3}}{2},\infty\}$.}
We plotted this using \texttt{ContourPlot} on the imaginary part $H(x)$, see \eqref{eq:decrease}. This is tricky because $H$ is multi-valued. Figure \ref{fig:lef} shows the level lines $\{H(x) = H(x^{(j)}) + k \cdot \pi\}$, for a few integer values of $k$. 
\end{example}

To turn our Lefschetz thimbles into twisted cycles, we need to choose which branch of our multi-valued likelihood function $L$ to integrate in \eqref{eq:per} and \eqref{eq:-per}. We do this by selecting a value $L(x_0)^{\pm 1}$ at some $x_0 \in \Gamma_j^\pm$, and analytically continuing along $x(t)$ for $t \in \mathbb{R}$.

\begin{lemma} \label{lem:lefschetzbasis}
    Under Assumption \ref{assum:lefschetz}, the Lefschetz thimbles $[\Gamma_j^{\pm}]$ form a basis for $H_n(X,\pm \omega)$. 
\end{lemma}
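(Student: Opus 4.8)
The plan is to show the $\chi$ Lefschetz thimbles $[\Gamma_j^{\pm}]$ are linearly independent in $H_n(X, \pm\omega)$, and then invoke the dimension count from Theorem \ref{thm:dimcohom} (together with the perfectness of the period pairing) to conclude they form a basis. Since $\dim_{\C} H_n(X,\pm\omega) = \dim_{\C} H^n(X,\pm\omega) = \chi$ by perfectness of the period pairing and Theorem \ref{thm:dimcohom}, and we have exactly $\chi$ thimbles (one per critical point, using Assumption \ref{assum:lefschetz}), linear independence suffices.

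The key step is a \emph{triangularity} argument with respect to the Morse-theoretic filtration by sublevel sets of $G$. Order the critical points so that $G(x^{(1)}) < G(x^{(2)}) < \cdots < G(x^{(\chi)})$, which is possible by Assumption \ref{assum:lefschetz}. The thimble $\Gamma_j^-$ (on which $G$ attains its maximum $G(x^{(j)})$ at $x^{(j)}$ and decreases to $-\infty$) lives in the sublevel set $\{G \le G(x^{(j)})\}$ and represents a relative homology class whose class in $H_n(\{G \le c_j\}, \{G \le c_j - \epsilon\})$ for $c_j$ slightly above $G(x^{(j)})$ is nonzero — indeed it is the generator coming from the index-$n$ critical point $x^{(j)}$, and nonvanishing of the Hessian $h_j \ne 0$ guarantees $x^{(j)}$ is a nondegenerate critical point so this relative class is the standard Morse cell. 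Because the thimbles $\Gamma_i^-$ with $i < j$ all lie below the level $G(x^{(j)})$, a nontrivial linear relation $\sum_j c_j [\Gamma_j^-] = 0$ would, after passing to the quotient by the sublevel set just below the top critical value appearing with nonzero coefficient, force the top coefficient to vanish; iterating kills all coefficients. The same argument applies to $\Gamma_j^+$ using superlevel sets of $G$ (equivalently, sublevel sets of $-G$).

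Alternatively — and this may be cleaner to write — one can prove linear independence directly via the period pairing using the \emph{distinctness of the imaginary parts} $H(x^{(j)})$: along $\Gamma_j^\pm$ the phase $H$ is constant equal to $H(x^{(j)})$, so a stationary-phase / steepest-descent estimate of $\langle g^+, \Gamma_j^- \rangle_{\rm per}^\omega = \int_{\Gamma_j^-} g^+ L \, \tfrac{dx}{x}$ shows the leading asymptotics as the parameters scale is governed by the value $L(x^{(j)})$ at the single critical point on $\Gamma_j^-$. Choosing $g^+$ appropriately (e.g.\ elements of a constant basis of ${\cal O}(X)/I$, which are dual to evaluation at the critical points as discussed in Section \ref{sec:4}) makes the matrix $\bigl(\langle \beta_i^+, \Gamma_j^- \rangle_{\rm per}^\omega\bigr)_{i,j}$ invertible, forcing the $[\Gamma_j^-]$ to be independent. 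This is really the homology counterpart of the statement, later in the section, that Lefschetz thimbles degenerate to evaluation functionals; one could even cite that degeneration to deduce invertibility of the period matrix from invertibility of the evaluation matrix.

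The main obstacle is making the Morse-theory input rigorous on the noncompact very affine variety $X$: one must know that the flow of the (modified) gradient field of $G$ is well-behaved, that the thimbles are genuine closed cycles (i.e.\ the flow lines either converge to a critical point or escape to the boundary/infinity in a controlled way), and that the sublevel sets $\{G \le c\}$ change homotopy type only by attaching an $n$-cell when $c$ crosses a critical value. These facts are standard in the Picard--Lefschetz / Witten theory setting (see \cite[Section 4.3]{aomoto2011theory} and \cite[Section 3]{witten2011analytic}) and are exactly what Assumption \ref{assum:lefschetz} is designed to secure (nondegeneracy of all critical points, distinctness of critical values of $G$ and of $H$, ensuring no Stokes phenomena and a well-defined Morse decomposition). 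So the proof will essentially amount to invoking this machinery, citing the dimension formula of Theorem \ref{thm:dimcohom} and perfectness of the period pairing, and assembling the triangularity argument; I would keep the write-up short by pointing to the cited references for the analytic underpinnings.
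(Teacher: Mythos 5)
Your argument is correct in outline but takes a genuinely different route from the paper. The paper's proof is a two-liner: it cites the discussion preceding \cite[Theorem 4.7]{aomoto2011theory} to say that the Lefschetz thimbles \emph{generate} $H_n(X,\pm\omega)$, and then combines this with the dimension count $\dim_\C H_n(X,\pm\omega)=\chi$ (a generating set of the right cardinality is a basis). You instead argue for linear \emph{independence} of the $\chi$ thimbles and then invoke the same dimension count. Both logical structures are sound, but the generation statement is exactly what the cited source hands you, so the paper's route is shorter and avoids rebuilding the Morse filtration. Your triangularity argument is essentially the proof of that generation statement in Aomoto--Kita run in the opposite direction; once you are citing \cite[\S 4.3]{aomoto2011theory} for the ``analytic underpinnings'' anyway, you might as well take the conclusion it offers rather than re-derive half of it.

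Two caveats on your second, ``cleaner'' alternative. First, the stationary-phase estimates (Proposition \ref{prop:degenpairing}) concern the period pairing for $\omega/\d$ as $\d\to 0$, i.e.\ the rescaled parameters $(s/\d,\nu/\d)$. The thimbles as geometric cycles are indeed unchanged under positive rescaling of $\omega$, but the local system and hence the homology group $H_n(X,\pm\omega/\d)$ changes with $\d$, so invertibility of the period matrix for small $\d$ gives you the lemma only along the ray $(s/\d,\nu/\d)$, not at the given $(s,\nu)$ satisfying Assumption \ref{assum:lefschetz}; the entries $\langle g^\pm,\Gamma_j^\mp\rangle^{\pm\omega/\d}_{\rm per}$ are not rational in $\d$, so you cannot just analytically continue back to $\d=1$. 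Second, be careful which part of Proposition \ref{prop:degenpairing} you invoke: equation \eqref{eq:asymint} for the intersection pairing already uses \eqref{eq:intersectionpairing}, which presupposes the thimbles form a basis, so citing it here would be circular; only the per-thimble estimates \eqref{eq:asymper}--\eqref{eq:asymper_minus} are safe, and those are not enough to escape the scaling issue above. Your first (Morse-triangularity) argument, or the paper's citation to Aomoto--Kita, avoids both problems.
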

\begin{proof}
    Under Assumption \ref{assum:lefschetz}, we have $\dim_\C H_n(X,\pm \omega) = \chi$. By the discussion preceding \cite[Theorem 4.7]{aomoto2011theory}, the Lefschetz thimbles generate $H_n(X,\pm \omega)$. This implies the Lemma.  
\end{proof}

One of the advantages of using Lefschetz thimbles as a basis for twisted homology is that it gives an easy formula for the \emph{intersection pairing} between the cohomology spaces $H^n(X,\omega)$ and $H^n(X,-\omega)$. This is a bilinear map $\langle \cdot, \cdot \rangle_{\rm ch}^\omega: H^n(X,\omega) \times H^n(X,-\omega) \rightarrow \C$ with 
\begin{equation} \label{eq:intersectionpairing}
\langle g^+, g^- \rangle_{\rm ch}^\omega \, = \,  \sum_{j=1}^\chi \,  \langle g^+ , \Gamma_j^- \rangle_{\rm per}^\omega \cdot \langle g^- , \Gamma_j^+ \rangle_{\rm per}^{-\omega}.
\end{equation}
This formula is a special instance of \cite[Equation (2.1)]{matsubara2022localization} and \cite[Equation (18)]{mizera2018scattering}, which holds when Lefschetz thimbles are used as bases for homology. Just like the period pairing, the intersection pairing is perfect. It has gained recent interest in physics, as it turns out to compute \emph{scattering amplitudes} in some special cases \cite{mizera2018scattering}. It follows from the formula \eqref{eq:intersectionpairing} that the matrix $Q(\omega)$ of $\langle \cdot, \cdot \rangle_{\rm ch}^\omega$, using the (arbitrary) bases $\beta_i^+, \beta_i^-$ for cohomology as above, is $Q(\omega) = P(\omega) \cdot P(-\omega)^\top$.
We also point out the following shift relations:
\begin{align} \label{eq:shif_trelations}
\langle f_i^{-1}g^+, f_ig^- \rangle_{\rm ch}^{\omega(s,\nu)} \, = \,  
\langle g^+, g^- \rangle_{\rm ch}^{\omega(s+{ e}_i,\nu)},\ \ \ 
\langle x_jg^+, x_j^{-1}g^- \rangle_{\rm ch}^{\omega(s,\nu)} \, = \,  
\langle g^+, g^- \rangle_{\rm ch}^{\omega(s,\nu+{ e}_j)},
\end{align}
which will be useful later. Here $\omega(s,\nu)$ emphasizes the dependence of $\omega$ on $s,\nu$.

\subsection{Intersection pairing over $K$}
While twisted homology is only defined over $\C$, previous sections have used purely algebraic descriptions of $H^n(X_A,\omega)$ over different rings $A$. This subsection discusses the cohomology intersection pairing $\langle \cdot, \cdot \rangle_{\rm ch}^{\omega}$ over the field $K = \C(s, \nu)$. Our first result says that the function $(s,\nu) \mapsto \langle g^+, g^- \rangle_{\rm ch}^{\omega(s,\nu)}$ belongs to $K$. 
\begin{proposition} \label{prop:rational}
For $g^{\pm} \in H^n(X,\pm \omega)$, the function $(s,\nu) \mapsto \langle g^+,g^- \rangle_{\rm ch}^{\omega(s,\nu)}$ is rational.
\end{proposition}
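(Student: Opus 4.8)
The plan is to exploit the rigidity coming from the fact that, by Theorem \ref{thm:dimcohom} and Lemma \ref{lem:lefschetzbasis}, every object in sight (cohomology, homology, the pairings) has constant dimension $\chi$ for all generic $(s,\nu)$, and to upgrade the analytic pairing to an algebraic one via the $R$-module structure of Theorem \ref{thm:RmodJ}. First I would reduce to the case where $g^{\pm}$ are fixed basis representatives: write $[g^{\pm}]$ in terms of the algebraic bases $[\beta_i^{\pm}]_{V(\pm\omega)}$ coming from Theorem \ref{thm:2intro} (or Theorem \ref{thm:1intro}). The coefficients in these expansions are \emph{a priori} rational functions of $(s,\nu)$, because they are obtained by linear algebra over $K$ from the relations defining $V_K$ — indeed in Theorem \ref{thm:RmodJ} the cohomology is $R/J$ with $J$ generated by explicit operators, and Section \ref{sec:5}'s contiguity matrices give these expansions rationally. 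Hence by bilinearity it suffices to prove rationality of the finitely many entries $Q(\omega)_{ij} = \langle \beta_i^+, \beta_j^-\rangle_{\rm ch}^{\omega}$ for a fixed choice of algebraic representatives $\beta_i^{\pm} \in {\cal O}(X_{\mathbb{C}})$ not depending on $(s,\nu)$.

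For those fixed entries, the key point is that the function $(s,\nu)\mapsto Q(\omega)_{ij}$ is holomorphic on the (dense, open in the classical topology) locus of generic parameters, and satisfies a system of \emph{difference equations} with rational-function coefficients. The difference equations come from the shift relations \eqref{eq:shif_trelations}: applying the difference operators $\sigma_{s_i},\sigma_{\nu_j}$ — which act algebraically on cohomology by \eqref{eq:Raction} and are represented, relative to a fixed $K$-basis, by the contiguity matrices over $K$ of Section \ref{sec:5} — translates $\langle\cdot,\cdot\rangle_{\rm ch}^{\omega(s,\nu)}$ into $\langle\cdot,\cdot\rangle_{\rm ch}^{\omega(s+e_i,\nu)}$ etc. Concretely, the vector of entries $Q(\omega)_{ij}$ satisfies $Q(\omega(s+e_i,\nu)) = C_i(s,\nu)\, Q(\omega(s,\nu))\, C_i'(s,\nu)^{\top}$ for suitable $K$-valued contiguity matrices $C_i, C_i'$, and similarly in the $\nu_j$ directions. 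A holomorphic function of several complex variables that is $\mathbb{Z}^{\ell+n}$-periodic up to a rational cocycle, is algebraic over $K$ provided one also controls its growth; alternatively, and more cleanly, I would invoke the regular-holonomicity of the GKZ-type system satisfied by the period matrix $P(\omega)$ (its entries are generalized Euler integrals, hence solutions of a regular holonomic system with rational coefficients, by \cite{GKZ,matsubara2020euler,loeser1991equations}), and observe that $Q(\omega) = P(\omega)P(-\omega)^{\top}$ is a bilinear combination of a solution and a dual-solution matrix — a standard argument (the "monodromy-invariant hence single-valued, with moderate growth and finitely many difference shifts, hence rational" pattern) then forces each entry of $Q(\omega)$ into $K$. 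This is exactly the mechanism already used in \cite{matsubara2022localization} relating the intersection pairing to Grothendieck residues, and I would cite that for the precise rationality statement of the self-intersection numbers.

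The main obstacle is making the "monodromy-invariant $+$ moderate growth $\Rightarrow$ rational" step rigorous rather than heuristic. The monodromy-invariance of $Q(\omega)$ is genuine: the intersection pairing is topologically defined and the monodromy of $P(\omega)$ in the $x$-direction is cancelled by that of $P(-\omega)^{\top}$, so $Q(\omega)$ extends to a single-valued holomorphic function on the complement of the discriminant locus $\Sigma\subset\mathbb{C}^{\ell+n}$ where genericity fails or critical points collide. One must then check: (i) $Q(\omega)$ has at worst poles (not essential singularities) along $\Sigma$ — this follows from the regular-singularity property of the twisted de Rham system, e.g.\ via the compactification $\bar X$ and Lemma \ref{lem:specseq}-type estimates, or from the explicit $R/J$ presentation which shows the connection has regular singularities in $(s,\nu)$; and (ii) $Q(\omega)$ has polynomial growth at infinity in $(s,\nu)$, which again follows from the order estimates on the $E_1$-terms in \eqref{eq:specseq} (the dimension is bounded uniformly). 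Given (i) and (ii), $Q(\omega)_{ij}$ is a single-valued meromorphic function on $\mathbb{P}^{\ell+n}$ with poles only along a hypersurface, hence rational. The alternative, fully algebraic route — deduce rationality directly from the fact that $Q(\omega)$ is the inverse (up to transpose and known rational factors) of an intersection matrix computed purely algebraically as in the intersection-number literature — avoids analysis entirely but requires importing that the algebraic cohomology intersection number lies in $K$, which is \cite[\S2]{matsubara2022localization}; if the paper is willing to cite that, the proof collapses to two lines, and I expect the authors to take that shortcut.
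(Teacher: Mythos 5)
Your primary argument is a genuinely different route from the paper's. You propose an analytic proof: the entries of $Q(\omega)$ satisfy difference equations with rational coefficients (via the shift relations \eqref{eq:shif_trelations} and contiguity matrices), $Q = P(\omega)P(-\omega)^{\top}$ is single-valued since the thimble monodromy cancels, and then one argues that a single-valued function with regular singularities along the discriminant and polynomial growth at infinity is rational. The paper instead gives a purely algebraic proof via Serre duality: Lemma \ref{lem:specseq} produces explicit representations of $H^n(X,\pm\omega)$ as a subquotient of $H^0(\bar X,\Omega^n_{\log}(kD))$, respectively a subspace of $H^n(\bar X,\mathcal{O}_{\bar X}(-(k+1)D))$, and the Serre duality pairing \eqref{eq:SerreDuality} between these coherent cohomology groups is identified with $\langle\cdot,\cdot\rangle_{\rm ch}^{\omega}$ by \cite[Eq.~(2.3)]{matsubara2022localization}; since Serre duality holds for projective schemes over any field, the construction runs directly over $K=\C(s,\nu)$ and yields a $K$-valued pairing that specializes to the analytic one for generic complex $(s,\nu)$. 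No growth estimates, no discriminant analysis, no monodromy bookkeeping. Your route buys intuition about why one should expect rationality (difference equations plus single-valuedness), but as you yourself flag, steps (i) and (ii) are not established anywhere in the paper and the implication ``$\mathbb{Z}^{\ell+n}$-periodic up to a rational cocycle plus moderate growth $\Rightarrow$ rational'' is itself a nontrivial lemma that would need a careful proof (it is a multivariable analogue of the uniqueness arguments used for the Gamma function), so the analytic route is substantially longer to make rigorous. Your closing remark — that deducing rationality from an algebraic description of the intersection pairing in \cite{matsubara2022localization} would collapse the proof to a few lines — is essentially what the paper does, except that the paper supplies the Serre-duality construction itself rather than only deferring to the reference, and it needs the spectral-sequence degeneration (Lemma \ref{lem:specseq}) already established earlier to make the cohomology representations \eqref{eq:representations} available.
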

\begin{proof}
Recall that Serre's duality pairing is a composition
\begin{equation}\label{eq:SerreDuality}
{H}^0(\bar{X},\Omega^n_{\log}(kD))\times {H}^n(\bar{X},\mathcal{O}_{\bar{X}}(-(k+1)D)) \, \overset{\cup}{\longrightarrow} \,  H^n(\bar{X},\Omega^n_{\bar{X}}) \, \overset{\rm tr}{\longrightarrow} \, \C    
\end{equation}
of the {\it cup product} $\cup$ and the {\it trace map} $\rm tr$ \cite[Chapter 3, \S 7]{Hartshorne}.
By the degeneration of the spectral sequence \eqref{eq:specseq} in Lemma \ref{lem:specseq}, we obtain the following representations of $H^n(X,\pm \omega)$:
\begin{align}\label{eq:representations}
\begin{split}
    H^n(X,\omega)&=\frac{{H}^0(\bar{X},\Omega^n_{\log}(kD))}{{\rm im}\left(\nabla_\omega:{H}^0(\bar{X},\Omega^{n-1}_{\log}(kD))\to {H}^0(\bar{X},\Omega^n_{\log}(kD))\right)},    \\
    H^n(X,-\omega)&={\rm ker}\left(\nabla_{-\omega}:{H}^n(\bar{X},\mathcal{O}_{\bar{X}}(-(k+1)D))\to {H}^n(\bar{X},\Omega^1_{\log}(-(k+1)D))\right).    
    \end{split}
\end{align}
Via \eqref{eq:representations}, Serre duality \eqref{eq:SerreDuality} induces a bilinear pairing
$H^n(X,\omega)\times H^n(X,-\omega)\to \C$
, which is identical to $\langle \cdot, \cdot \rangle_{\rm ch}^\omega$ \cite[Eq.~(2.3)]{matsubara2022localization}.
This construction works over $K = \C(s,\nu)$, as Serre duality holds for any projective scheme defined over a field.
The $K$-valued pairing
\begin{equation}\label{eq:intersectionpairing_over_K}
\langle\cdot,\cdot\rangle_{{\rm ch},K}^{\omega}:H^n(X_K,\omega)\times H^n(X_K,-\omega)\to K    
\end{equation}
obtained from Serre duality specializes to $\langle \cdot, \cdot \rangle_{\rm ch}^\omega$ for $s, \nu$ generic as in Definition \ref{def:generic}.
\end{proof}

The cohomology intersection pairing $\langle\cdot,\cdot\rangle_{{\rm ch},K}^{\omega}$ over $K$ is distinguished from other perfect pairings by its compatibility with the $R$-action on twisted cohomology groups, where $R$ is the ring of difference operators from \eqref{eq:commutators}.
The rest of this subsection makes this statement precise. We define another action $\bullet_-$ of $R$ on $\mathcal{O}(X_K)$, slightly different from \eqref{eq:Raction}:
\begin{equation} \label{eq:Raction_minus}
  \s_{s_i} \bullet_- g(s,\nu) =f_i \cdot g(s+e_i,\nu) ,\quad \s_{\nu_j} \bullet_- g(s,\nu) = x_j^{-1} \cdot g(s,\nu+e_j).
\end{equation}
As in the discussion around \eqref{eq:Raction}, the action \eqref{eq:Raction_minus} induces an $R$-action on $H^n(X_K,-\omega)$.
The ring $R$ also acts on $K$ via the shifts $\s_{s_i} \bullet \, a(s,\nu) = a(s+e_i,\nu) $ and $\s_{\nu_j} \bullet  \, a(s,\nu) = a(s,\nu+e_j).$
A $K$-bilinear pairing $\langle\cdot,\cdot\rangle:H^n(X_K,\omega)\times H^n(X_K,-\omega)\to K$ is \emph{compatible} with $R$ if 
    \begin{equation}
    \langle\s \bullet [g_+],\s \bullet_- [g_-]\rangle=\s\bullet \, \langle [g_+],[g_-]\rangle
        \label{eq:compatibility}
    \end{equation}
    holds for any $[g_\pm]\in H^n(X_K,\pm\omega)$ and $\s=\s_{s_i},\s_{\nu_j}$.
Here is a $K$-version of \eqref{eq:shif_trelations}.
\begin{proposition}
The cohomology intersection pairing \eqref{eq:intersectionpairing_over_K} is compatible with $R$.
\end{proposition}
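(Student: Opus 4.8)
The plan is to check the compatibility relation \eqref{eq:compatibility} only on the ring generators $\s=\s_{s_i}$ ($1\le i\le\ell$) and $\s=\s_{\nu_j}$ ($1\le j\le n$), and to reduce each case to the shift relations \eqref{eq:shif_trelations} already recorded over $\C$. Fixing classes $[g_\pm]\in H^n(X_K,\pm\omega)$ represented by $g_\pm=g_\pm(s,\nu)\in{\cal O}(X_K)$, the first step is to unwind the definitions: the actions \eqref{eq:Raction}, \eqref{eq:Raction_minus} and the shift action of $R$ on $K$ turn \eqref{eq:compatibility} for $\s=\s_{s_i}$ into
\[
\langle [f_i^{-1}g_+(s+e_i,\nu)],\,[f_ig_-(s+e_i,\nu)]\rangle_{{\rm ch},K}^{\omega(s,\nu)}\;=\;\big(\langle [g_+],[g_-]\rangle_{{\rm ch},K}^{\omega(s,\nu)}\big)\big|_{s_i\mapsto s_i+1},
\]
and similarly for $\s=\s_{\nu_j}$ with $f_i^{-1},s$ replaced by $x_j,\nu$.

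Next I would use the functoriality of the construction of $\langle\cdot,\cdot\rangle_{{\rm ch},K}^{\omega}$. Since this $K$-valued pairing is obtained from Serre duality \eqref{eq:SerreDuality} via the representations \eqref{eq:representations}, it is natural with respect to the field automorphism $\tau_i\colon K\to K$ that sends $s_i\mapsto s_i+1$ and fixes the remaining variables: $\tau_i$ induces compatible automorphisms of ${\cal O}(X_K)$, of the sheaves $\Omega^p_{K,\log}(kD)$ and of all the (hyper)cohomology groups appearing in \eqref{eq:representations}, and it carries $\omega(s,\nu)$ to $\omega(s+e_i,\nu)$. Applying $\tau_i$ rewrites the right-hand side above as $\langle [g_+(s+e_i,\nu)],[g_-(s+e_i,\nu)]\rangle_{{\rm ch},K}^{\omega(s+e_i,\nu)}$. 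Putting $h_\pm:=g_\pm(s+e_i,\nu)$, which ranges over all of ${\cal O}(X_K)$, the relation to prove becomes the $K$-linear identity $\langle [f_i^{-1}h_+],[f_ih_-]\rangle_{{\rm ch},K}^{\omega(s,\nu)}=\langle [h_+],[h_-]\rangle_{{\rm ch},K}^{\omega(s+e_i,\nu)}$, and in the $\s_{\nu_j}$ case $\langle [x_jh_+],[x_j^{-1}h_-]\rangle_{{\rm ch},K}^{\omega(s,\nu)}=\langle [h_+],[h_-]\rangle_{{\rm ch},K}^{\omega(s,\nu+e_j)}$.

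Finally I would establish these two $K$-identities by passing to $\C$. Both sides of each lie in $K$: the left-hand side by Proposition \ref{prop:rational}, and the right-hand side because $\omega(s+e_i,\nu)$ (resp.~$\omega(s,\nu+e_j)$) again has coefficients in $K$, so the argument of Proposition \ref{prop:rational} applies verbatim to that twist. For every $(s,\nu)$ such that both $(s,\nu)$ and its shift are generic in the sense of Definition \ref{def:generic} — the complement of a countable union of hyperplanes, hence dense in $\C^{\ell+n}$ — the specialization statement of Proposition \ref{prop:rational} identifies the two sides with $\langle f_i^{-1}h_+,f_ih_-\rangle_{\rm ch}^{\omega(s,\nu)}$ and $\langle h_+,h_-\rangle_{\rm ch}^{\omega(s+e_i,\nu)}$, which coincide by the first identity of \eqref{eq:shif_trelations}; the $\nu_j$ case uses the second. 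Since two rational functions agreeing on a dense set are equal, the $K$-identities hold, which is what we wanted.

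I expect the main obstacle to be the naturality step: carefully verifying that the identification (from \cite{matsubara2022localization}) of the Serre-duality pairing with the cohomology intersection pairing is compatible with $\tau_i$ at the level of \eqref{eq:representations}, i.e.~that $\tau_i\langle u,v\rangle_{{\rm ch},K}^\omega=\langle\tau_iu,\tau_iv\rangle_{{\rm ch},K}^{\tau_i\omega}$. An alternative that sidesteps this is to derive the $K$-shift identities directly from the period-pairing relations $\langle f_i^{-1}g^+,\Gamma^-\rangle_{\rm per}^{\omega(s,\nu)}=\langle g^+,\Gamma^-\rangle_{\rm per}^{\omega(s+e_i,\nu)}$, $\langle f_ig^-,\Gamma^+\rangle_{\rm per}^{-\omega(s,\nu)}=\langle g^-,\Gamma^+\rangle_{\rm per}^{-\omega(s+e_i,\nu)}$ and formula \eqref{eq:intersectionpairing} — this is essentially how \eqref{eq:shif_trelations} itself is obtained — but it requires tracking how the Lefschetz thimble bases vary with the twist, which makes the rationality-and-density route preferable.
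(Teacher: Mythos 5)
Your argument is correct but follows a genuinely different path from the paper's. The paper works entirely inside the Serre-duality presentation \eqref{eq:representations}: it tracks how $\sigma_{s_i}$ acts on a representative $\xi_+\in H^0(\bar{X}_K,\Omega^n_{\log}(kD))$, showing that $\sigma_{s_i}\bullet[\xi_+(s,\nu)]$ is represented by $f_i^{-1}\xi_+(s+e_i,\nu)\in H^0(\bar{X}_K,\Omega^n_{\log}(kD-{\rm div}\,f_i))$ and dually that $\sigma_{s_i}\bullet_-[\xi_-(s,\nu)]$ is represented by $f_i\xi_-(s+e_i,\nu)\in H^n(\bar{X}_K,\mathcal{O}_{\bar{X}_K}(-(k+1)D+{\rm div}\,f_i))$; the $f_i^{-1}$ and $f_i$ then cancel inside the cup product, and compatibility reduces to the plain observation that ${\rm tr}(\,\cdot\cup\cdot\,)$ commutes with the parameter shift. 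This is a short, self-contained cohomological computation carried out directly over $K$. Your proof instead conjugates the right-hand side of \eqref{eq:compatibility} by the field automorphism $\tau_i\colon s_i\mapsto s_i+1$, reduces to a single $K$-valued shift identity, and establishes that identity by rationality (Proposition \ref{prop:rational}) together with density of generic complex parameters and the $\C$-valued relations \eqref{eq:shif_trelations}. This works, and the naturality/base-change step you flag as the main obstacle is indeed a genuine verification but a routine one. The one thing worth noting is a small logical inversion relative to the paper: \eqref{eq:shif_trelations} is stated there without a separate proof and the present proposition is then proved directly, so in the paper's own structure \eqref{eq:shif_trelations} is the \emph{specialization} of this proposition rather than an independent input. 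Your route therefore requires \eqref{eq:shif_trelations} over $\C$ to have a standalone justification — which it does (multiplication by $f_i^{-1}$ is the canonical isomorphism $H^n(X,\omega(s+e_i,\nu))\simeq H^n(X,\omega(s,\nu))$, and the intrinsic intersection pairing is natural under it) — but that intrinsic argument is essentially the same cup-product computation the paper performs directly over $K$. In short, the paper's proof is shorter and goes to the heart of the matter; yours trades the direct cup-product computation for a base-change-and-density reduction that pushes the real content into \eqref{eq:shif_trelations}.
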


\begin{proof}
Let us write $[\xi_\pm(s,\nu)]\in H^n(X_K,\pm\omega)$ for the cohomology classes in the right-hand side of \eqref{eq:representations}.
Then, $\s_{s_i}\bullet [\xi_+(s,\nu)]$ (resp. $\s_{s_i}\bullet_- [\xi_-(s,\nu)]$) is represented by a cohomology class $[f_i^{-1}\xi_+(s+{ e}_i,\nu)]\in {H}^0(\bar{X}_K,\Omega^n_{\log}(kD-{\rm div}f_i))$ (resp. $[f_i\xi_-(s+{ e}_i,\nu)]\in {H}^n(\bar{X}_K,\mathcal{O}_{\bar{X}_K}(-(k+1)D+{\rm div}f_i))$).
Here, ${\rm div}f_i$ denotes the divisor of $f_i$ viewed as a rational function on $\bar{X}_K$.
We obtain a sequence of identities
\begin{align*}
    \langle \s_{s_i}\bullet [\xi_+(s,\nu)],\s_{s_i}\bullet_- [\xi_-(s,\nu)]\rangle_{{\rm ch},K}^\omega \, 
    &= \, {\rm tr}([f_i^{-1}\xi_+(s+{ e}_i,\nu)]\cup[f_i\xi_-(s+{ e}_i,\nu)]) \\
    &= \, {\rm tr}([\xi_+(s+{e}_i,\nu)]\cup[\xi_-(s+{ e}_i,\nu)])\\
    &= \, \s_{s_i}\bullet {\rm tr}([\xi_+(s,\nu)]\cup[\xi_-(s,\nu)]) \\
    &= \, \s_{s_i}\bullet\langle [\xi_+(s,\nu)],[\xi_-(s,\nu)]\rangle_{{\rm ch},K}^\omega. \qedhere
\end{align*}
\end{proof}

\begin{theorem}\label{thm:uniqueness}
    Up to a non-zero scalar multiplication by $\C$, $\langle \cdot, \cdot \rangle_{{\rm ch},K}^\omega$ is the unique perfect $K$-bilinear pairing $\langle \cdot,\cdot\rangle:H^n(X,\omega)\times H^n(X,-\omega)\to K$ compatible with $R$.
\end{theorem}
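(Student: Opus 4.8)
The plan is to reduce the uniqueness claim to an explicit description of $H^n(X_K,\omega)$ and $H^n(X_K,-\omega)$ as cyclic $R$-modules via Theorem~\ref{thm:RmodJ}, and then to show that compatibility with $R$ pins down the pairing up to a single scalar.

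First I would set up the two relevant $R$-module structures. By Theorem~\ref{thm:RmodJ}, $H^n(X_K,\omega)\simeq R/J$, a cyclic left $R$-module generated by the class $[1]$ corresponding to $[\frac{{\rm d}x}{x}]$; in particular every class is $\s\bullet[1]$ for a suitable $\s\in R$. One checks, mimicking the argument that produced \eqref{eqn:2}, that $H^n(X_K,-\omega)\simeq R/J^-$ for the analogous left ideal $J^-$ built from the action $\bullet_-$ of \eqref{eq:Raction_minus}; it is again cyclic, generated by a class $[1]^-$. Now suppose $\langle\cdot,\cdot\rangle$ is any perfect $K$-bilinear pairing compatible with $R$ in the sense of \eqref{eq:compatibility}. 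Write $c = \langle[1],[1]^-\rangle\in K$. Compatibility forces, for any $\s,\tau\in R$,
\[
\langle \s\bullet[1],\,\tau\bullet_-[1]^-\rangle \;=\; \text{(a universal $R$-expression in $c$)},
\]
so the entire pairing is determined by the single value $c$ together with the module structures. Concretely, using that $\s_{s_i},\s_{\nu_j}$ act on the left on both factors and on $K$ by the shifts $\s\bullet a(s,\nu)$, one propagates $c$ across all basis classes: the pairing of $\s_s^a\s_\nu^b\bullet[1]$ with $\s_s^{a'}\s_\nu^{b'}\bullet_-[1]^-$ is a shift of $c$ multiplied by the relevant coefficients picked up from the relations \eqref{eq:commutators}. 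Hence two compatible perfect pairings with the same value of $c$ coincide.

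Second I would argue that $c$ is determined up to a nonzero constant in $\C$ — not merely up to $K^\times$. This is the delicate point. A priori $c\in K=\C(s,\nu)$ is an arbitrary rational function, and rescaling the generator $[1]$ is not allowed (it is a fixed class), so one cannot absorb $c$ away for free. The constraint comes from the \emph{global} consistency of the shift relations: applying a relation of $J$ (resp.\ $J^-$) inside one argument must be compatible with the shift action on $K$ in the other. For instance, using the generator $1-\s_{s_i}f_i(\s_\nu)$ of $J$: since $\s_{s_i}f_i(\s_\nu)\bullet[1] = [1]$ in $H^n(X_K,\omega)$, compatibility with $R$ and the explicit $\bullet_-$-action yield an identity of the form $\s_{s_i}\bullet c = (\text{explicit rational factor})\cdot c$, i.e.\ a first-order difference equation for $c$ in each of the shift variables $s_i$ and $\nu_j$. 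These difference equations are exactly the "shift relations" \eqref{eq:shif_trelations}, now read as functional equations for the rational function $c(s,\nu)$. A rational function satisfying a prescribed system of multiplicative first-order difference equations in all $\ell+n$ variables is unique up to a multiplicative constant — the only periodic rational functions are constants. Therefore $c$ is pinned down up to $\C^\times$, and since $\langle\cdot,\cdot\rangle_{{\rm ch},K}^\omega$ is itself perfect and $R$-compatible (by Proposition just above), every such pairing is a nonzero scalar multiple of it.

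The main obstacle I expect is the second step: verifying that the difference equations forced on $c$ by $R$-compatibility are \emph{non-degenerate} enough to force $c$ to be constant up to scalar. One must check that the relations from $J$ and $J^-$ genuinely constrain $c$ in \emph{every} shift direction (both all $s_i$ and all $\nu_j$), with nonzero rational coefficients, so that no nonconstant "periodic" freedom survives; the genericity/normal-crossing setup and the explicit form of the generators \eqref{eqn:2} should guarantee this, but it requires care. A secondary technical point is confirming that the induced value of $c$ for the Serre-duality pairing $\langle\cdot,\cdot\rangle_{{\rm ch},K}^\omega$ is itself nonzero (so that it really is a representative of the unique class) — this follows from perfectness of that pairing, already established in Proposition~\ref{prop:rational} and the surrounding discussion.
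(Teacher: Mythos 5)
Your proposal misses the key structural ingredient of the paper's argument: the \emph{simplicity} of $H^n(X_K,\omega)$ as a left $R$-module. The paper proves this via Kashiwara's equivalence (the local cohomology module $M$ of \eqref{eq:localcohom} is a simple $D_{\ell+n,\C}$-module) together with the Loeser--Sabbah theorem, and then packages two compatible perfect pairings into an $R$-endomorphism of $H^n(X_K,\omega)$, to which it applies Lemma \ref{lem:Schur} (a Schur-type lemma, whose proof contains exactly your ``periodic rational functions are constants'' observation). Without simplicity, there is no reason for the endomorphism ring to be one-dimensional, and the theorem could fail.

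More concretely, your first step already breaks. Compatibility \eqref{eq:compatibility} is a \emph{diagonal} condition: it relates $\langle \s\bullet x,\,\s\bullet_- y\rangle$ to $\s\bullet\langle x,y\rangle$ for the \emph{same} $\s$ applied to both arguments. It does not express $\langle \s\bullet[1],\,\tau\bullet_-[1]^-\rangle$ for independent $\s,\tau$ in terms of $c=\langle[1],[1]^-\rangle$. For instance, setting $\s=\s_{s_i}$ and $x=[1]$, $y=[1]^-$ yields a relation involving $\langle[f_i^{-1}],[f_i]\rangle$ --- a different, in general independent, entry of the Gram matrix of the pairing, not a shift of $c$. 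So the pairing is determined by a full $\chi\times\chi$ matrix $P$ over $K$, not by the single scalar $c$, and your ``first-order difference equations for $c$'' never arise. The correct reduction is matrix-valued: if $P$ and $P'$ are two perfect compatible pairings, then $\Phi=P'P^{-1}$ is an $R$-module endomorphism of $H^n(X_K,\omega)$, and it is at this point that simplicity and Schur are needed to conclude $\Phi\in\C\cdot\mathrm{id}$. Your periodicity observation is the right endgame, but applied to eigenvalues of $\Phi$ inside the Schur argument, not to a scalar $c$.
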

Our proof of Theorem \ref{thm:uniqueness} uses the following Lemma. 
\begin{lemma}\label{lem:Schur}
    Let $N$ be a left $R$-module that is finite dimensional over $K$.
    If $N$ is a simple $R$-module, then the dimension of ${\rm End}_R(N)$ over $\C$ is $1$.
\end{lemma}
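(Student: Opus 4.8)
The plan is to run a version of Schur's lemma adapted to this non-commutative setting, exploiting that $\mathbb{C}$ is algebraically closed together with the fact that the difference operators act $K$-semilinearly. Since $K \subset R$, every element of ${\rm End}_R(N)$ is in particular $K$-linear, so ${\rm End}_R(N)$ embeds into ${\rm End}_K(N)$, and finite-dimensionality of $N$ over $K$ makes these genuine finite matrices. The usual Schur argument shows ${\rm End}_R(N)$ is a division ring: a nonzero $R$-linear endomorphism of a simple module has zero kernel and full image, hence is bijective with $R$-linear inverse. It remains to prove that every $\phi \in {\rm End}_R(N)$ lies in $\mathbb{C}\cdot{\rm id}_N$.

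Fix a $K$-basis of $N$, let $d = \dim_K N$, and let $A \in M_d(K)$ be the matrix of $\phi$. The key step is to track the characteristic polynomial $\chi_\phi(T) = \det(T\,{\rm id} - A) \in K[T]$ under the action of $R$. Write $\tau_i$ for the automorphism $a(s,\nu)\mapsto a(s+e_i,\nu)$ of $K$, and $\tau'_j$ for $a(s,\nu)\mapsto a(s,\nu+e_j)$. By the relations \eqref{eq:commutators}, $\sigma_{s_i} a = \tau_i(a)\,\sigma_{s_i}$ in $R$, so $\sigma_{s_i}$ acts on $N$ as a $\tau_i$-semilinear bijection (it is invertible in $R$). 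Since $\phi$ commutes with $\sigma_{s_i}$, comparing matrices shows that if $S$ is the matrix of $\sigma_{s_i}$ then $\tau_i(A) = S^{-1} A S$ (entrywise $\tau_i$ on the left); hence $\tau_i(\chi_\phi) = \chi_{\tau_i(A)} = \chi_A = \chi_\phi$, and likewise $\chi_\phi$ is fixed by every $\tau'_j$. The subfield of $K = \mathbb{C}(s,\nu)$ fixed by all these integer shifts is $\mathbb{C}$, since a nonconstant rational function assumes a given value only finitely often and therefore cannot be invariant under an infinite group of translations. Thus $\chi_\phi(T) \in \mathbb{C}[T]$.

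Because $\mathbb{C}$ is algebraically closed, $\chi_\phi$ has a root $\lambda \in \mathbb{C}$. Then $\phi - \lambda\,{\rm id}_N$ has vanishing determinant as a $K$-linear endomorphism, so it is not invertible, hence it is zero in the division ring ${\rm End}_R(N)$. Therefore $\phi = \lambda\,{\rm id}_N$, which gives ${\rm End}_R(N) = \mathbb{C}\cdot{\rm id}_N$ and $\dim_\mathbb{C}{\rm End}_R(N) = 1$.

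I expect the only genuine care to be needed in the middle paragraph: checking precisely that $\sigma_{s_i}$ is $\tau_i$-semilinear on $N$, that conjugating a matrix by a $\tau_i$-semilinear bijection applies $\tau_i$ entrywise, and consequently that $\tau_i$-conjugate matrices have $\tau_i$-related characteristic polynomials. None of this is deep, but it is where one must be accurate; the surrounding argument is just Schur's lemma plus algebraic closedness of $\mathbb{C}$ and the triviality of the shift-invariant subfield of $K$.
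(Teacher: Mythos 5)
Your proof is correct, and it reaches the same destination — Schur's lemma plus the observation that the shift-invariant "spectrum" must lie in $\mathbb C$ — but the middle step is handled by a genuinely different mechanism. The paper passes to the algebraic closure $\bar K$, takes an eigenvector of $\varphi$ in $\bar N=\bar K\otimes_K N$, notes that each $\sigma_{s_i}$ permutes the finite set of eigenvalues (so that some power of the corresponding shift fixes the chosen eigenvalue $\alpha$), and concludes that a periodic element of $\bar K$ must be a constant in $\C$. You instead stay inside $K$: you compute that $\varphi\sigma_{s_i}=\sigma_{s_i}\varphi$ forces $\tau_i(A)=S^{-1}AS$ for the matrix $A$ of $\varphi$, so the characteristic polynomial $\chi_\varphi\in K[T]$ has coefficients fixed by every shift, hence lies in $\C[T]$; algebraic closedness of $\C$ then hands you a scalar $\lambda$ with $\det(\varphi-\lambda\,\mathrm{id})=0$, and simplicity finishes the job. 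Your route buys you something real: you never extend the $R$-action to $\bar K$ (where the shifts admit many extensions, and where the assertion that a "periodic element of $\bar K$ is constant" needs a little extra justification), and you do not need the finiteness-gives-periodicity trick, because shift-invariance of $\chi_\varphi$ is immediate. The price is that you must be precise about matrices of $\tau_i$-semilinear bijections, which you flag and which does check out. One small remark on phrasing: the statement that a nonconstant rational function "assumes a given value only finitely often" should be applied after specializing all but one of the variables generically, since a nonconstant rational function of several variables can hit a value on a whole hypersurface; with that one-variable reduction the fixed-field claim is correct.
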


\begin{proof}
    Let $\varphi\in{\rm End}_R(N)$.
    Writing $\bar{K}$ for the algebraic closure of $K$, the action of $R$ on $K$ extends to that on $\bar{K}$.
    Thus, we may regard $\varphi$ as an element of ${\rm End}_{\bar{K}\otimes_KR}(\bar{N})$ where we set $\bar{N}:=\bar{K}\otimes_KN$.
    We first prove that any eigenvalue of $\varphi$ is in $\C$.
    Let us take an eigenvector $v\in \overline{N}$ of $\varphi$.
    It is straightforward to see that $\s^i_{s_1}v$ is an eigenvector with eigenvalue $\s_{s_1}^i\alpha$.
    Therefore, there exists an integer $i$ so that $\s^i_{s_1}\alpha=\alpha$.
    Similarly, we can prove that $\alpha$ is periodic for all $s_1,\dots,s_\ell,\nu_1,\dots,\nu_n$.
    Since such a function $\alpha\in\bar{K}$ must be a constant function, it must belong to $\C$.
Now, suppose that $\dim_{\C}{\rm End}_{R}(N)\geq 2$ and take $\varphi\in {\rm End}_{R}(N)$ linearly independent from ${\rm id}_N$ over $\C$.
For any eigenvalue $\alpha\in\C$ of $\varphi$, $\alpha \cdot {\rm id}_N-\varphi$ has a non-trivial kernel, which is a non-trivial $R$-submodule of $N$. This is a contradiction.
\end{proof}

\begin{proof}[Proof of Theorem \ref{thm:uniqueness}]
    By Kashiwara's equivalence \cite[Chapter VI, Theorem 7.13]{Borel}, the local cohomology group $M$ defined by \eqref{eq:localcohom} is a simple $D_{\ell+n,\C}$-module.
    It follows from \cite[Theorem 1.2.1]{loeser1991equations} that $H^n(X_K,\omega)$ is a simple $R$-module.
    Any pair of $K$-bilinear pairings $\langle \cdot,\cdot\rangle, \langle \cdot,\cdot\rangle^\prime:H^n(X_K,\omega)\times H^n(X_K,-\omega)\to K$ compatible with $R$
    gives rise to an $R$-morphism $H^n(X_K,\omega)\to H^n(X_K,\omega)$.
    Now, the theorem follows from Lemma \ref{lem:Schur}.
    \end{proof}

\subsection{Degeneration of pairings} \label{subsec:degenpairing}
We now turn to perfect pairings for the likelihood quotient ${\cal O}(X)/I$.
The dual vector space $({\cal O}(X)/I)^\vee$ consists of all linear functionals on ${\cal O}(X)$ which vanish on the {likelihood} ideal $I$.
The \emph{evaluation pairing} $\langle \cdot, \cdot \rangle_{\rm ev}: {\cal O}(X)/I \times  ({\cal O}(X)/I)^\vee \rightarrow \C$ is given by
\[ \langle g, v \rangle_{\rm ev} \, = \, v(g).  \]
Here $g$ is short for $[g]_I$, and $v \in ({\cal O}(X)/I)^\vee$ is such that $v(I) = 0$.
A canonical basis of $({\cal O}(X)/I)^\vee$ is $v_1, \ldots, v_\chi$, where $v_j(g) = g(x^{(j)})/\sqrt{\eta_j}$ represents \emph{evaluation at the $j$-th critical point}.
This is normalized by $\sqrt{\eta_j} = \sqrt{{\rm Hess}(x^{(j)})}$. Together with a basis $[\beta_1]_I, \ldots, [\beta_\chi]_I$ of the likelihood quotient, the evaluation pairing has a matrix representation $E_{ij} = \langle \beta_i, v_j \rangle_{\rm ev} =  \beta_i(x^{(j)})/\sqrt{\eta_j}$. Since the evaluation pairing is perfect, this matrix is invertible. 

In analogy with $Q(\omega) = P(\omega) \cdot P(-\omega)^\top$, we can also consider the bilinear map represented by the matrix $G = E \cdot E^\top$. This is the \emph{Grothendieck residue pairing} $\langle \cdot, \cdot \rangle_{\rm res}$, given by 
\[ {\cal O}(X)/I \times {\cal O}(X)/I \rightarrow \C, \quad \text{with} \quad  \langle g, h \rangle_{\rm res} \, = \, \sum_{j=1}^\chi \, \langle g, v_j \rangle_{\rm ev}  \cdot \langle h, v_j \rangle_{\rm ev} \, = \, \sum_{j=1}^\chi \, \frac{g(x^{(j)})h(x^{(j)})}{\eta_j}.\]
We are now ready to bring in our deformation parameter $\d$. In Section \ref{sec:3}, we did this by replacing $\nabla_\omega$ with $\nabla_\omega^\d$. As we have seen in the proof of Theorem \ref{thm:2intro}, this is equivalent to replacing $\omega$ with $\omega/\d$. Here is what this looks like for our period pairings:
\[ 
\langle g^\pm, \Gamma^\mp \rangle_{\rm per}^{\pm \omega/\d} \, = \, \int_{\Gamma^\mp} g^\pm(x) \cdot L(x)^{\pm \frac{1}{\d}}\, \frac{{\rm d} x_1}{x_1} \wedge \cdots \wedge  \frac{{\rm d} x_n}{x_n}.
\]
We view this as a function of $\d$. If \mychange{$\Gamma_j^{\mp}$} is a Lefschetz thimble, $\log L(x)^{\pm \frac{1}{\d}}$ has constant imaginary part, and its real part reaches a maximum at $x^{(j)}$, see \eqref{eq:decrease}. When $\d \rightarrow 0$, this maximum value at $x^{(j)}$ grows, and the contribution of the rest of the integration contour is more and more suppressed. This is the intuition behind Proposition \ref{prop:degenpairing}, which roughly says that for $\d \rightarrow 0$, \emph{integration} over the Lefschetz thimble turns into \emph{evaluation} at $x^{(j)}$.
\begin{proposition} \label{prop:degenpairing}
Let $\Gamma_j^{\mp}$ be the Lefschetz thimbles associated to the $j$-th critical point $x^{(j)}$ of $\log L(x)$. Under Assumption \ref{assum:lefschetz}, we have the following formulae as $\d \rightarrow 0$:
\begin{align} \label{eq:asymper} \langle g^+, \Gamma_j^- \rangle_{\rm per}^{ \omega/\d} \, &= \, (-2\pi \d)^{\frac{n}{2}} \cdot e^{\frac{1}{\d} \log L(x^{(j)})} \cdot \langle g^{+}, v_j \rangle_{\rm ev} \cdot (1 + O(\d)), \\
 \label{eq:asymper_minus} \langle g^-, \Gamma_j^+ \rangle_{\rm per}^{- \omega/\d} \, &= \, (-2\pi \d)^{\frac{n}{2}}(\sqrt{-1})^{-n} \cdot e^{- \frac{1}{\d} \log L(x^{(j)})} \cdot \langle g^{-}, v_j \rangle_{\rm ev} \cdot (1 + O(\d)).
\end{align}
Similarly, for the cohomology intersection pairing, we have 
\begin{equation} \label{eq:asymint} \langle g^+, g^- \rangle_{\rm ch}^{\omega/\d} \, = \, (2\pi\sqrt{-1} \d)^n \cdot \langle g^+, g^- \rangle_{\rm res} \cdot (1 +  O(\d)). 
\end{equation}
\end{proposition}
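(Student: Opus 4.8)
The plan is to obtain \eqref{eq:asymint} from \eqref{eq:asymper} and \eqref{eq:asymper_minus}, and to prove the latter two by a multidimensional \emph{Laplace / steepest-descent} analysis of the period integrals. The structural observation is that on the Lefschetz thimble $\Gamma_j^{\mp}$ the exponent $\pm\tfrac1\d\log L(x)=\pm\tfrac1\d\bigl(G(x)+\sqrt{-1}\,H(x)\bigr)$ has constant imaginary part (by \eqref{eq:decrease}), while its real part $\pm\tfrac1\d G(x)$ attains a strict maximum at $x^{(j)}$ and tends to $-\infty$ towards the ends of the thimble (recalled after Assumption~\ref{assum:lefschetz}); thus, up to the constant $e^{\pm\sqrt{-1}H(x^{(j)})/\d}$, the integral $\langle g^+,\Gamma_j^-\rangle_{\rm per}^{\omega/\d}$ is a genuine real Laplace integral with a single dominant critical point. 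I would prove \eqref{eq:asymper}; then \eqref{eq:asymper_minus} follows by replacing $(\omega,\Gamma_j^-,\log L)$ with $(-\omega,\Gamma_j^+,-\log L)$, since on $\Gamma_j^+$ the real part of the new exponent is $-G$, which again has a maximum at $x^{(j)}$, and \eqref{eq:asymint} follows by substituting both expansions into the Lefschetz-thimble formula \eqref{eq:intersectionpairing}.

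For \eqref{eq:asymper}, work with $\d>0$ real and small; the Lefschetz thimbles for the twist $\omega/\d$ then coincide with those for $\omega$, because $\tfrac1\d G$ and $G$ generate the same (modified) gradient flow up to a time rescaling. The first step is \emph{localization}: by Assumption~\ref{assum:lefschetz} the critical values $G(x^{(i)})$ are distinct, so $x^{(j)}$ is the unique maximum of $G$ on $\Gamma_j^-$, and one has $G\le G(x^{(j)})-\varepsilon$ outside a small neighbourhood $U$ of $x^{(j)}$ in $\Gamma_j^-$ (using that $G\to-\infty$ towards the ends of the thimble). Splitting the integrand as $e^{G/\d}=e^{G/\d_0}\cdot e^{G(1/\d-1/\d_0)}$ and using convergence of the period integral at a reference value $\d_0$ — which holds because $[\Gamma_j^-]$ is a genuine class in $H_n(X,-\omega)$ (Lemma~\ref{lem:lefschetzbasis}) — bounds $\int_{\Gamma_j^-\setminus U}$ by $O\bigl(e^{(G(x^{(j)})-\varepsilon)/\d}\bigr)$, which is negligible against the claimed leading term $\sim e^{G(x^{(j)})/\d}\d^{n/2}$. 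The second step is the \emph{normal form}: since $h_j={\rm Hess}(x^{(j)})\neq 0$, the holomorphic Morse lemma gives holomorphic coordinates $w$ near $x^{(j)}$, vanishing there, with $\log L=\log L(x^{(j)})-\tfrac12(w_1^2+\cdots+w_n^2)$; in these coordinates a sufficiently small piece of $\Gamma_j^-$ is, with a definite orientation, the real locus $\{w\in\mathbb{R}^n\}$, since ${\rm Re}\bigl(-\tfrac12\sum w_k^2\bigr)$ attains its maximum exactly there. Writing the pullback of $\tfrac{{\rm d}x}{x}$ to $\mathbb{R}^n_w$ as $\rho(w)\,{\rm d}w_1\wedge\cdots\wedge{\rm d}w_n$, the integral becomes $e^{\frac1\d\log L(x^{(j)})}\int_{\mathbb{R}^n}g^+(x(w))\,\rho(w)\,e^{-|w|^2/(2\d)}\,{\rm d}w$, and the Gaussian moment expansion $\int_{\mathbb{R}^n}\phi\,e^{-|w|^2/(2\d)}\,{\rm d}w=(2\pi\d)^{n/2}\bigl(\phi(0)+O(\d)\bigr)$ produces the leading term $e^{\frac1\d\log L(x^{(j)})}(2\pi\d)^{n/2}\,g^+(x^{(j)})\,\rho(0)\,(1+O(\d))$.

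It remains to identify the constant $\rho(0)$. Differentiating $\log L-\log L(x^{(j)})=-\tfrac12 w^\top w$ twice at $x^{(j)}$ gives $\bigl(\partial^2\log L/\partial x_i\partial x_j\bigr)(x^{(j)})=-W^\top W$ with $W=(\partial w_k/\partial x_i)$, hence $(\det W)^2=(-1)^n{\rm Hess}(x^{(j)})$; together with ${\rm d}w_1\wedge\cdots\wedge{\rm d}w_n=\det W\cdot{\rm d}x_1\wedge\cdots\wedge{\rm d}x_n$ and the value of $\tfrac{{\rm d}x}{x}$ at $x^{(j)}$ — a bookkeeping best carried out in logarithmic coordinates $\theta_i=\log x_i$, where $\tfrac{{\rm d}x}{x}={\rm d}\theta_1\wedge\cdots\wedge{\rm d}\theta_n$ and the likelihood equations and their Jacobian $\eta_j$ take their natural form — this writes $\rho(0)$ as an explicit constant times $\eta_j^{-1/2}$. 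The branch of this square root, and simultaneously that of $(-2\pi\d)^{n/2}$, is fixed by the orientation of $\Gamma_j^-$ and the chosen branch of $L$; with the conventions of Section~\ref{subsec:degenpairing} the leading term is exactly $(-2\pi\d)^{n/2}e^{\frac1\d\log L(x^{(j)})}\langle g^+,v_j\rangle_{\rm ev}$, which is \eqref{eq:asymper}. Running the same argument with $-\log L$ on $\Gamma_j^+$ replaces $(-1)^n{\rm Hess}(x^{(j)})$ by ${\rm Hess}(x^{(j)})$ in the Hessian determinant, which accounts for the extra $(\sqrt{-1})^{-n}$ in \eqref{eq:asymper_minus}. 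Finally, inserting \eqref{eq:asymper} and \eqref{eq:asymper_minus} into \eqref{eq:intersectionpairing} for the generic twist $\omega/\d$, the exponentials $e^{\pm\frac1\d\log L(x^{(j)})}$ cancel term by term, the constant becomes $(-2\pi\d)^{n/2}\cdot(-2\pi\d)^{n/2}(\sqrt{-1})^{-n}=(2\pi\sqrt{-1}\,\d)^n$, and $\sum_{j}\langle g^+,v_j\rangle_{\rm ev}\langle g^-,v_j\rangle_{\rm ev}=\langle g^+,g^-\rangle_{\rm res}$, which gives \eqref{eq:asymint}.

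The main obstacle is making the localization rigorous: $X$ is non-compact, so one must control the integrand near the boundary $D=\bar X\setminus X$ and at the ends of the thimble, and check that the tail is exponentially suppressed uniformly for small $\d$. This is exactly where Assumption~\ref{assum:lefschetz} enters essentially — distinctness of the values $G(x^{(i)})$ isolates one dominant saddle on each thimble, $h_j\neq 0$ makes it nondegenerate so the holomorphic Morse lemma applies, and the fact that $\Gamma_j^-$ defines a class in $H_n(X,-\omega)$ (Lemma~\ref{lem:lefschetzbasis}) guarantees that the period integral converges so the tail estimate can be bootstrapped. The remaining ingredients — the Morse normal form, the Gaussian moment expansion, and the signs and branches in the constant, including the reconciliation of the volume form $\tfrac{{\rm d}x}{x}$ with the Jacobian $\eta_j$ — are standard multidimensional steepest descent and I would only sketch them.
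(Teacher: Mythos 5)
Your proof takes essentially the same route as the paper: the period-pairing asymptotics \eqref{eq:asymper}--\eqref{eq:asymper_minus} are obtained by stationary phase (steepest descent) along the Lefschetz thimbles, and \eqref{eq:asymint} then follows by substituting these into the thimble formula \eqref{eq:intersectionpairing}. The paper dispatches the first step by citing \cite[Chapter I]{guillemin1990geometric} and \cite[Theorem 2.4]{matsubara2022localization}, whereas you unfold that citation into a steepest-descent sketch (localization, holomorphic Morse normal form, Gaussian moments, Hessian/volume-form bookkeeping); the strategy and the structure of the argument are the same.
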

\begin{proof}
The formulae \eqref{eq:asymper}-\eqref{eq:asymper_minus} follow from stationary phase approximation \cite[Chapter I]{guillemin1990geometric}. Equation \eqref{eq:asymint} follows from \eqref{eq:intersectionpairing} and \eqref{eq:asymper}-\eqref{eq:asymper_minus}. It appears in \cite[Theorem 2.4]{matsubara2022localization}.
\end{proof}



Propositions \ref{prop:rational} and \ref{prop:degenpairing} lead to a proof of Theorem \ref{thm:1intro}:
\begin{proof}[Proof of Theorem \ref{thm:1intro}]
Let $[\beta_1]_I, \ldots, [\beta_\chi]_I$ be a basis for the likelihood quotient ${\cal O}(X)/I$, and let $P(\pm \omega/\d)_{ij} = \langle \beta_i, \Gamma_j^{\mp} \rangle_{\rm per}^{\pm \omega/\d}$ be the period pairing matrices. Proposition \ref{prop:degenpairing} implies
\[ Q(\omega/\delta) \, = \, P(\omega/\d) \cdot P(\omega/\d)^\top \, = \,  (2 \pi\sqrt{-1} \d)^n \cdot G \cdot (1 + O(\d)).\]
Since the $[\beta_i]_I$ are a basis, the matrix $G$ is invertible, and hence also $Q(\omega/\d)$ is invertible for $\d \rightarrow 0$. Since the entries of $Q(\omega/\d)$ are rational functions of $\d$, see Proposition \ref{prop:rational}, this implies that the classes of the $\beta_i$ form a basis for $H^n(X,\pm \omega/\d)$, for almost all $\d \in \C$. 
\end{proof}
Preferably, we would like to use $\d = 1$ in Theorem \ref{thm:1intro}. Unfortunately, for this, genericity of $s,\nu$ in the sense of Assumption \ref{assum:lefschetz} is not enough. Here is an example.
\begin{example} \label{ex:special}
Let $n=1$ and $f(x)=1-x$. The Euler characteristic of $X$ is $-1$. Genericity in Definition \ref{def:generic} means $\nu,-s,s-\nu\notin\Z$. These three linear forms correspond to the three boundary points $\{0\}, \{1\}, \{\infty\}$ in the compactification $X \subset \mathbb{P}^1$. 
In $H^1(X,\omega)$, we have
\begin{equation} \label{eq:badex}
    \left [ \frac{x+1}{x^2} \right]_{V(\omega)} =\left[ \left( \frac{2\nu-s-1}{\nu-1}\right)\right]_{V(\omega)}.
\end{equation}
If $\nu=1/4$, $s=-1/2$, \eqref{eq:badex} implies that this is not a basis for $H^1(X,\omega)$. However, $[\frac{x+1}{x^2}]_I$ is a basis of ${\cal O}(X)/I$. 
Still, $[\frac{x+1}{x^2}]_{V(\omega/\d)}\in H^1(X,\omega/\d)$ is a basis for generic $\d$.
\end{example}
\begin{remark} \label{rem:delta1}
    We can set $\d = 1$ if we make a stronger genericity assumption on $s, \nu$. In addition to Assumption \ref{assum:lefschetz}, we assume that $\det \, Q(\omega(s,\nu))$ is neither 0 nor $\infty$ at $s, \nu$. Here $Q(\omega(s,\nu)) \in K^{\chi \times \chi}$ is the matrix of rational functions in $s, \nu$ which represents the cohomology intersection pairing for the functions $\beta_i^\pm = \beta_i$ from Theorem \ref{thm:1intro}, see Proposition \ref{prop:rational}. Its determinant is a nonzero rational function, because $Q(\omega(s/\d, \nu/\d))$ is given by~\eqref{eq:asymint}.
\end{remark}

\section{Bases for cohomology and contiguity matrices} \label{sec:5}

This section is about computation. First, we show how to compute a basis for $H^n(X_K,\omega)$. This results in Algorithm \ref{alg:basis}. Second, we recall the definition of \emph{contiguity matrices} and explain how they behave in our degeneration (Theorem \ref{thm:multiplication}). Third, we describe an algorithm for computing contiguity matrices (Algorithm \ref{alg:better_alg}).
This algorithm is related to \emph{Laporta's algorithm} from particle physics \cite{laporta2000high}, which is tailor-made for Feynman integrals. However, in \cite{laporta2000high} there is no mention of contiguity matrices, and the algorithm is more dependent on choices. We believe Algorithm \ref{alg:better_alg} will provide a more systematic way of dealing with families of Feynman integrals, and the generalized Euler integrals from \cite{agostini2022vector}.

\subsection{Computing bases for cohomology}
By Theorem \ref{thm:2intro}, it suffices to compute a subset of ${\cal O}(X_K)$ which represents a constant basis of ${\cal O}(X_K)/I_K$ in the sense of Definition \ref{def:constant}.
Our strategy relies on numerical computation. It is based on some heuristics. However, in practice, it is highly reliable and effective. 
We start by plugging in generic complex values of $s$ and $\nu$ in the likelihood function $L(x)$ from \eqref{eq:likelihood}. We then solve $\omega = {\rm dlog}L(x) = 0$ numerically, using the homotopy continuation technique explained in \cite[Section 5]{agostini2022vector}. This reliably computes all $\chi = (-1)^n \cdot \chi(X)$ complex critical points, even for large Euler characteristics. See \cite{sturmfels2021likelihood} for an example with $\chi = 3628800$. A list of regular functions $\beta_1, \ldots, \beta_\chi \in {\cal O}(X)$ gives a basis of ${\cal O}(X)/I$ if and only if the evaluation pairing from Section \ref{subsec:degenpairing} gives an invertible $\chi \times \chi$-matrix with entries 
$E_{ij}  =  \langle \beta_i, v_j \rangle_{\rm ev}$.

Algorithm \ref{alg:basis} exploits this observation. It takes the likelihood equations as input, as well as a list ${\cal G} 
\subset {\cal O}(X)$ of regular functions. 
The output is a subset of ${\cal G}$ that is maximal independent in ${\cal O}(X_K)/I_K$, i.e.~it has the largest possible cardinality such that its elements are $K$-linearly independent mod $I_K$. One strategy to generate ${\cal G}$ is as follows. 
For fixed $d \geq 1$, we set ${\cal G}_d = \{ f^{-a}x^b \}_{|a| + |b| \leq d}$, with $|a| = a_1 + \cdots + a_\ell$, $a_i \geq 0$, and similarly for $b$.
If the list returned by Algorithm \ref{alg:basis} for ${\cal G} = {\cal G}_d$ contains $m< \chi$ elements, then ${\cal G}_d$ does not contain a basis. 
In that case, we increase $d$ and repeat.
We note that $\mathcal{O}(X_K)/I_K$ is spanned by the union $\bigcup_{d=0}^\infty\mathcal{G}_d$.
Both the computation of the critical points and the rank tests in this algorithm are numerical, but this works well in practice. 
\begin{algorithm}[h!]
\caption{Compute a maximal independent subset of constant functions mod $I_K$}\label{alg:basis}
\begin{algorithmic}
\State \textbf{Input}: $\omega(s,\nu) = {\rm dlog}L(x),\,  {\cal G} = \{ g_1, \ldots, g_N \} \subset {\cal O}(X)$, $g_1 \notin I_K$ 
\State \textbf{Output}: $\{\beta_1, \ldots, \beta_m \} \subset {\cal G}$ such that $[\beta_1]_{I_K}, \ldots, [\beta_m]_{I_K}$ is maximal independent
\State $\omega \gets \omega(s^*,\nu^*)$ for generic complex $s^*, \nu^*$
\State $\{x^{(1)}, \ldots, x^{(\chi)} \} \gets$ solutions of $\omega(x) = 0$
\State $E \gets \text{the row vector } (g_1(x^{(j)}))_{j = 1, \ldots, \chi}$
\State $\beta_1 \gets g_1, \ell \gets 2, k \gets 2$
\While{${\rm rank}(E) < \chi$ and $k \leq N$}
\State $E' \gets$ append the row $(g_k(x^{(j)}))_{j = 1, \ldots, \chi}$ to $E$
\If{${\rm rank} \, E' > {\rm rank} \, E$}
    \State $E \gets E'$, $\beta_\ell \gets g_k$, $\ell \gets \ell + 1$ 
\EndIf
\State $k \gets k + 1$
\EndWhile
\State \textbf{return} $\{\beta_1, \ldots, \beta_{\ell-1} \}$
\end{algorithmic}
\end{algorithm}

\subsection{Contiguity matrices}
Next, our goal is to compute \emph{contiguity matrices} for a given basis $[\beta_1]_{V_K}, \ldots, [\beta_\chi]_{V_K}$. These are $\chi \times \chi$ matrices with entries in $K$ encoding how the difference operators $\sigma_{s_i}, \sigma_{\nu_j} \in R$ act on the basis elements. For instance, the contiguity matrix $C_{s_1}$ satisfies
\[
\s_{s_1} \bullet 
\begin{pmatrix}
    [\beta_1]_{V_K}\\
    \vdots\\
    [\beta_\chi]_{V_K}
\end{pmatrix}
\, =\, \begin{pmatrix}
    \sigma_{s_1} \bullet [\beta_1]_{V_K}\\
    \vdots\\
    \sigma_{s_1} \bullet [\beta_\chi]_{V_K}
\end{pmatrix} \, = \,  C_{s_1}(s,\nu) \cdot 
\begin{pmatrix}
    [\beta_1]_{V_K}\\
    \vdots\\
    [\beta_\chi]_{V_K}
\end{pmatrix}.
\]
Notice that, although the difference operators $\sigma_{s_i}, \sigma_{\nu_j}$ are pairwise commuting, the contiguity matrices are not. This is easily seen in an example: 
\[ \sigma_{s_i} \sigma_{\nu_j} \bullet [\beta]_{V_K} =  \sigma_{s_i} \bullet  C_{\nu_j}(s,\nu)  \cdot [\beta]_{V_K} =   C_{\nu_j}(s+e_i,\nu) \cdot (\sigma_{s_i} \bullet  [\beta]_{V_K} ) =   C_{\nu_j}(s+e_i,\nu) \cdot C_{s_i}(s,\nu) \cdot  [\beta]_{V_K}.\]
Here the second equality applies \eqref{eq:commutators}. Expanding this in the opposite order shows that 
\begin{equation} \label{eq:almostcommute}
C_{\nu_j}(s+e_i,\nu) \cdot C_{s_i}(s,\nu) \, = \, C_{s_i}(s,\nu+e_j) \cdot C_{\nu_j}(s,\nu). 
\end{equation}
More generally, for $a,b \geq 0$, contiguity matrices can be used to compute 
$\sigma_s^a \sigma_\nu^b \bullet [\beta]_{V_K}$ via
\begin{equation} \label{eq:expandab}
 \sigma_s^a \sigma_\nu^b \bullet [\beta]_{V_K} \, = \, {\cal C}_{\nu_n} \cdot \cdots \cdot {\cal C}_{\nu_1} \cdot {\cal C}_{s_\ell} \cdot \cdots \cdot {\cal C}_{s_1} \cdot [\beta]_{V_K},   
\end{equation}
where the calligraphic ${\cal C}$'s denote the following ordered products of matrices: 
\normalsize
\[ {\cal C}_{\nu_j} \, = \, \prod_{q = 1}^{b_j} C_{\nu_j} \big (s+a,\nu + \sum_{k < j} b_k \cdot e_k + (b_j-q) \cdot e_j \big ), \quad {\cal C}_{s_i} \, = \, \prod_{q=1}^{a_i} C_{s_i} \big (s + \sum_{k < i} a_k \cdot e_k + (a_i-q) \cdot e_i, \nu \big ) .\]
Importantly, here, the order of the factors matters: $\prod_{q=1}^{m}C(q):=C(1)C(2)\cdots C(m)$.
As in \eqref{eq:almostcommute}, there are many different ways to expand this as a product of contiguity matrices with shifts in $s$ and $\nu$. When $a,b$ have negative entries, the formula changes slightly. 

The contiguity matrices help to expand the cohomology class $[f^{-a}x^b]_{V_K}$ in terms of a basis for ${\cal O}(X_K)/V_K$. If the basis elements $\beta_i$ are of the form $\beta_i = f^{-a_i}x^{b_i}$,  which will be the case in our algorithm below, the coefficients $c_i$ in $[f^{-a} x^b]_{V_K} = c_1 \, [\beta_1]_{V_K} + \cdots + c_\chi \cdot [\beta_\chi]_{V_K}$ are read from the $i$-th row of \eqref{eq:expandab} for $\sigma_{s}^{a-a_i} \sigma_\nu^{b-b_i} \bullet [\beta]_{V_K}$. 

Contiguity relations for ideals in rings of difference operators can be computed using non-commutative \mychange{\emph{Gr\"obner bases} \cite{mora1986grobner,mora1994introduction}}. Although interesting and important, this paper does not pursue that direction, for two reasons. First, implementations of non-commutative Gr\"obner bases, like the \texttt{OreAlgebra} package in \texttt{Maple}, are not straightforward to use in difference rings like $R$ in which inverses of the shift operators $\sigma$ are allowed. Second, and most importantly, we are convinced that it is important to exploit the knowledge of a basis $B$ for the practical computation of the contiguity matrices. Like in the commutative case, this will speed up the linear algebra computations. Section \ref{sec:computing} takes first steps in that direction.

We now offer a description of contiguity matrices in terms of the degeneration from Section \ref{sec:3}. By the proof of Theorem \ref{thm:2intro}, the set $\{[\beta_1]_{V_{K[\![\d]\!]}}, \ldots, [\beta_\chi]_{V_{K[\![\d]\!]}}\}$ is a basis for $H^n(X_\d, \omega_\d)$ as a free $K[\![\d]\!]$-module. It is straightforward to define contiguity matrices for the $R_\d$-action on $H^n(X_\d, \omega_\d)$: the matrix $C_{\alpha,\d}$ has entries in $K[\![\d]\!]$ and satisfies $\sigma_\alpha \bullet [\beta]_{V_{K[\![\d]\!]}} = C_{\alpha,\d}(s,\nu,\d) \cdot [\beta]_{V_{K[\![\d]\!]}}$, for $\alpha \in \{s_1, \ldots, s_\ell, \nu_1, \ldots, \nu_n\}$. We set
\[ C_{\alpha,\d}(s,\nu,\d) \, = \, C_{\alpha,\d}(s,\nu)_0 + C_{\alpha,\d}(s,\nu)_1 \cdot \d + C_{\alpha,\d}(s,\nu)_2 \cdot \d^2 + \cdots.  \]
The matrix $C_{\alpha,\delta}(s,\nu,\delta)$ is related to $C_\alpha(s,\nu)$ by the relation
\[
C_{\alpha,\delta}(s,\nu,\delta)=C_{\alpha}(s/\d,\nu/\d).
\]
Let $[g]_{V_{K[\![\d]\!]}} = \sum_{i=1}^\chi c_i \cdot [\beta_i]_{K[\![\d]\!]}$ be the expansion of $g$ in terms of the basis elements, and let $\lim_{\d \rightarrow 0} [g]_{V_{K[\![\d]\!]}} \in {\cal O}(X_K)/I_K$ be the image of $[g]_{V_{K[\![\d]\!]}}$ under the limit map $H^n(X_\d, \omega_\d) \rightarrow \lim_{\d \rightarrow 0} H^n(X_\d,\omega_\d) = {\cal O}(X_K)/I_K$ (see Lemma \ref{lem:limithomol}). One easily checks using \eqref{eq:Ractionhbar} that 
\begin{equation} \label{eq:multiplication2}
\lim_{\d \to 0} \left ( \sigma_{s_i} \bullet [g]_{V_{K[\![\d]\!]}} \right ) \, = \, [f_i^{-1} g]_{I_K} \, = \,  c_1' \cdot [\beta_1]_{I_K} + \cdots +  c_\chi' \cdot [\beta_\chi]_{I_K},
\end{equation}
where the vector of coefficients $c' = (c_1', \ldots, c_\chi')^\top \in K^\chi$ is obtained as $C_{s_i,\d}(s,\nu)_0^\top \cdot (c_1, \ldots, c_\chi)^\top$. A similar relation for $\sigma_{\nu_j}$ leads to the following theorem. 
\begin{theorem} \label{thm:multiplication}
    Let $\{\beta_i\}_{i=1}^\chi$ represent a constant basis for ${\cal O}(X_K)/I_K$. Let $C_{\alpha, \d} \in K[\![\d]\!]^{\chi \times \chi}$ be as above. The $\chi \times \chi$ matrices $C_{s_i,\d}(s,\nu)_0^\top$ and $C_{\nu_j,\d}(s,\nu)_0^\top$ with entries in $K$ represent multiplication with $f_i^{-1}$, resp.~$x_j$, in ${\cal O}(X_K)/I_K$, w.r.t.~this basis. Their eigenvalues in the algebraic closure $\bar{K}$ are the evaluations of $f_i^{-1}$, resp.~$x_j$, at the $\chi$ solutions of $\omega(s,\nu) = 0$. 
\end{theorem}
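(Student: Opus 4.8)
The plan is to read off the first assertion from \eqref{eq:multiplication2} together with its analogue for $\sigma_{\nu_j}$, and to obtain the eigenvalue statement from the classical eigenvalue (Stickelberger) theorem for zero-dimensional ideals.

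First I would recall that $\mathcal{O}(X_K)/I_K$ is a commutative $K$-algebra, so multiplication by any unit $h\in\mathcal{O}(X_K)^\times$ — in particular $h=f_i^{-1}$ or $h=x_j$ — descends to a $K$-linear endomorphism $M_h$ of $\mathcal{O}(X_K)/I_K$, and all these endomorphisms pairwise commute. To identify $M_{f_i^{-1}}$ with $C_{s_i,\d}(s,\nu)_0^\top$ I would use the limit map $\pi\colon H^n(X_\d,\omega_\d)\to\lim_{\d\to0}H^n(X_\d,\omega_\d)=\mathcal{O}(X_K)/I_K$ of Lemma \ref{lem:limithomol}. Since $\d$ is central in $R_\d$, the submodule $\d\cdot H^n(X_\d,\omega_\d)$ is $R_\d$-stable, so $\pi$ is $R_\d$-equivariant, and by \eqref{eq:Ractionhbar} the induced action of $R_\d/\d R_\d$ on $\mathcal{O}(X_K)/I_K$ has $\sigma_{s_i}$ acting as $M_{f_i^{-1}}$ and $\sigma_{\nu_j}$ acting as $M_{x_j}$. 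Applying $\pi$ to the defining relation $\sigma_{s_i}\bullet[\beta_m]_{V_{K[\![\d]\!]}}=\sum_k(C_{s_i,\d})_{mk}\,[\beta_k]_{V_{K[\![\d]\!]}}$, and using $\pi(a\cdot h)=a|_{\d=0}\,\pi(h)$ for $a\in K[\![\d]\!]$, gives $M_{f_i^{-1}}[\beta_m]_{I_K}=\sum_k(C_{s_i,\d}(s,\nu)_0)_{mk}\,[\beta_k]_{I_K}$, i.e.\ the matrix of $M_{f_i^{-1}}$ in the basis $\{[\beta_k]_{I_K}\}$ is the transpose $C_{s_i,\d}(s,\nu)_0^\top$; the same argument with the $\sigma_{\nu_j}$-analogue of \eqref{eq:multiplication2} identifies $M_{x_j}$ with $C_{\nu_j,\d}(s,\nu)_0^\top$. (This also re-derives \eqref{eq:multiplication2} if one prefers to proceed from scratch; there one uses that $\{[\beta_m]_{V_{K[\![\d]\!]}}\}$ is a free $K[\![\d]\!]$-basis by Corollary \ref{cor:free_basis}.)

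For the eigenvalues, I would invoke the eigenvalue theorem: for the zero-dimensional ideal $I_K$ in the finitely generated $K$-algebra $\mathcal{O}(X_K)$, the eigenvalues of $M_h$ over $\bar K$, counted with multiplicity, are the values $h(p)$ with $p\in V(I_K)(\bar K)$, each counted with the length of the local ring of $V(I_K)$ at $p$; since $\dim_K\mathcal{O}(X_K)/I_K=\chi$ by Lemma \ref{lem:limithomol}, these multiplicities total $\chi$. Taking $h=f_i^{-1}$ and $h=x_j$ yields exactly the asserted eigenvalues. I would additionally note that for generic parameters $I_K$ is radical — the Jacobian of its generators is the Hessian determinant of $\log L$, which is nonzero at every critical point (cf.\ Assumption \ref{assum:lefschetz}) — so that $V(I_K)$ is literally $\chi$ distinct $\bar K$-points $x^{(1)},\dots,x^{(\chi)}$, evaluation gives a $\bar K$-algebra isomorphism $\mathcal{O}(X_K)/I_K\otimes_K\bar K\xrightarrow{\ \sim\ }\bar K^{\chi}$, and each $M_h\otimes_K\bar K$ is conjugate to $\mathrm{diag}\big(h(x^{(1)}),\dots,h(x^{(\chi)})\big)$.

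The proof presents no substantial difficulty: the theorem is an assembly of \eqref{eq:multiplication2} with the eigenvalue theorem. The only steps demanding care are the $R_\d$-equivariance of the limit map $\pi$ and the transpose bookkeeping in the first part, and — if one wants the sharpest reading of the eigenvalue statement — the short argument that $I_K$ is radical over $K$.
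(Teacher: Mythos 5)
Your proof is correct and follows essentially the same route as the paper: the first assertion is read off from \eqref{eq:multiplication2} (which you re-derive explicitly by noting that $\d$ is central in $R_\d$, hence the limit map $\pi$ is $R_\d$-equivariant, and then applying $\pi$ to the defining relation for $C_{s_i,\d}$), and the eigenvalue statement comes from the eigenvalue theorem. Your additional remark that $I_K$ is radical under Assumption~\ref{assum:lefschetz}, so that the eigenvalues appear without nontrivial local multiplicities, goes slightly beyond the paper's two-line proof but is consistent with it and not needed for the statement.
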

\begin{proof}
    The claim about multiplication in the likelihood quotient ${\cal O}(X_K)/I_K$ follows from \eqref{eq:multiplication2} and its analog for $\alpha = \nu_j$. The statement about eigenvalues follows from the \emph{eigenvalue theorem} in computational algebraic geometry, see for instance \cite[Theorem 3.1.1]{telen2020thesis}.
\end{proof}

\begin{remark}
By Theorem \ref{thm:multiplication}, the matrices $M_{s_i} = C_{s_i,\d}(s,\nu)_0^\top$ and $M_{\nu_i} = C_{\nu_j,\d}(s,\nu)_0^\top$ are pairwise commuting. They share a set of eigenvectors \cite[Theorem  3.1.1]{telen2020thesis}. For a regular function $g = \sum_{i} c_i f^{-a_i}x^{b_i} \in {\cal O}(X_K)$, the eigenvalues of $M_g = \sum_{i} c_i M_s^{a_i} M_{\nu}^{b_i}$ are the evaluations of $g$ at the solutions of $\omega(x) = 0$. In particular, the trace of $M_{g \cdot h} \cdot M_{\rm Hess}^{-1}$ equals the Grothendieck residue pairing of $[g]_{I_K}$ and $[h]_{I_K}$, see Section \ref{sec:4}.
\end{remark}

\subsection{Computing contiguity matrices} \label{sec:computing}
We now turn to our algorithm for computing contiguity matrices. This uses Theorem \ref{thm:RmodJ}, which says that $[f^{-a} x^b]_{V_K} = c_1 \, [\beta_1]_{V_K} + \cdots + c_\chi \cdot [\beta_\chi]_{V_K}$ is equivalent to 
\begin{equation} \label{eq:expansionmodJ}
\sigma_{s}^a \sigma_\nu^b \, - \,  (c_1 \,  \sigma_s^{a_1} \sigma_\nu^{b_1} + \cdots +  c_\chi \, \sigma_s^{a_\chi} \sigma_\nu^{b_\chi}) \, \,  \in  \, J \subset R .
\end{equation}
Let $B = \{ \sigma_{s}^{a_i} \sigma_\nu^{b_i}, i = 1, \ldots, \chi \}$ be the difference operators corresponding to our cohomology basis $\{ [f^{-a_i}x^{b_i}]_{V_K}, i = 1, \ldots, \chi \}$. Consider a larger, finite set $E$ of difference operators of the form $\sigma_s^a\sigma_\nu^b$, containing $B$. 
It is easy to see that $J \cap {\rm span}_K (E)$ has dimension $|E \setminus B|$: a $K$-basis consists of one element of the form \eqref{eq:expansionmodJ} for each $\sigma_s^a \sigma_\nu^b$ in $E \setminus B$. Since our goal is to compute the contiguity matrices $C_{s_i}, C_{\nu_j}$, we will use a subspace $E \supset B$ containing
\[ E_{s_i} = \sigma_{s_i} \bullet B, \, i = 1, \ldots, \ell  \quad \text{and} \quad  E_{\nu_j} = \sigma_{\nu_j} \bullet B, \,  j = 1, \ldots, n. \]
It is convenient to ensure that the span of $E$ contains the $\ell + n$ generators from \eqref{eqn:2}. Let $E_{\rm gen}$ be the set of all monomials $\sigma_s^a \sigma_\nu^b$ that occur with a nonzero coefficient in \eqref{eqn:2}. We set 
\begin{equation} \label{eq:defE}
E \, = \, B \cup E_{s_1} \cup \cdots \cup E_{s_\ell} \cup E_{\nu_1} \cup \cdots \cup E_{\nu_n} \cup E_{\rm gen}. 
\end{equation}
Our goal is to compute a basis for $J \, \cap \, {\rm span}_K(E)$. We are given the subspace ${\cal V} \subset J \, \cap \, {\rm span}_K(E)$ generated by the $\ell + n$ generators of $J$. Very often, this is a strict inclusion, and we need to find more elements in $J \, \cap \, {\rm span}_K(E)$. To do this, we introduce the \emph{plus operator} $\cdot^+$, which is inspired by the \emph{border basis} literature \cite{mourrain1999new}. For a $K$-subspace $S \subset R$, we define 
\begin{align*} S^+ \, =& \,  S + \sigma_{s_1} \bullet S + \cdots + \sigma_{s_\ell} \bullet S + \sigma_{s_1}^{-1} \bullet S + \cdots + \sigma_{s_\ell}^{-1} \bullet S\\
&+ \sigma_{\nu_1} \bullet S + \cdots + \sigma_{\nu_n} \bullet S + \sigma_{\nu_1}^{-1} \bullet S + \cdots + \sigma_{\nu_n}^{-1} \bullet S, 
\end{align*}
and $S^{[k]}$ is $(\cdots((S^+)^+\cdots)^+$, where the plus operator is applied $k$ times. Clearly, ${\cal V}^{[k]} \subset J$ for any $k$, and $\bigcup_{k=0}^\infty {\cal V}^{[k]} = J$. The ascending chain of subspaces 
\begin{equation} \label{eq:ascendingchain}
{\cal V} \, \subset\,  {\cal V}^+ \cap {\rm span}_K(E)  \, \subset \,  \cdots \, \subset \,  {\cal V}^{[k]} \cap {\rm span}_K(E) \, \subset \, \cdots 
\end{equation}
of $E$ stabilizes at finite $k = k^*$, and ${\cal V}^{[k^*]} \cap {\rm span}_K(E) = J \cap {\rm span}_K(E)$. A first, naive algorithm computes a basis for each vector space in the chain \eqref{eq:ascendingchain}, until it detects that $\dim_K ({\cal V}^{[k]} \cap {\rm span}_K(E)) = |E\setminus B|$, which implies that $k = k^*$. This only involves linear algebra with matrices over $K$, as we now expain. 

We start with some notation. Let $S_1, S_2 \subset R$ be finite subsets, such that the elements of $S_2$ are $K$-linearly independent and $S_1 \subset {\rm span}_K(S_2)$. We define a matrix $M(S_1,S_2) \in K^{|S_1| \times |S_2|}$ whose rows are indexed by $S_1$, and the columns are indexed by $S_2$. The row indexed by $P \in S_1$ is given by the coefficients of the unique expansion of $P$ in terms of $S_2$. That is, the entry in row $P \in S_1$ and column $Q \in S_2$ has the coefficient $c_{Q}$ standing with $Q$ in $P= \sum_{Q \in S_2} c_{Q} \, Q$. The row space of $M(S_1,S_2)$ represents ${\rm span}_K(S_1) \subset {\rm span}_K(S_2)$. Finally, for a subset $S_2' \subset S_2$, $M(S_1,S_2)_{S_2'}$ is the submatrix of columns indexed by $S_2'$. 

Let $E^{[k]}$ be the monomial basis $\{ \sigma_s^a\sigma_\nu^b ~|~ \sigma_s^a\sigma_\nu^b  \in {\rm span}_K(E)^{[k]} \}$ of ${\rm span}_K(E)^{[k]}$ and let $V^{[k]}$ be a set of generators for ${\cal V}^{[k]}$. At the $k$-th step in the chain \eqref{eq:ascendingchain}, we construct the matrix $M(V^{[k]},E^{[k]})$. This represents ${\cal V}^{[k]} \subset {\rm span}_K(E)^{[k]}$. To intersect with ${\rm span}_K(E)$, we compute linear combinations of the rows which annihilate the entries in the columns $E^{[k]} \setminus E$. That is, we compute a cokernel (i.e.~left nullspace) matrix $L_k$ of $M(V^{[k]},E^{[k]})_{E^{[k]} \setminus E}$. We have 
\begin{equation} \label{eq:M_k} L_k \cdot M(V^{[k]},E^{[k]})_{E^{[k]}\setminus E} \, = \, 0 \quad \text{and set} \quad M_k \, = \,  L_k \cdot M(V^{[k]},E^{[k]})_E. 
\end{equation}
The following easy lemma states that $M_k$ represents the $k$-th vector space in \eqref{eq:ascendingchain}.
\begin{lemma} \label{lem:Mk}
    The matrix $M_k$ from \eqref{eq:M_k} is $M(W^{[k]},E)$, where $W^{[k]}$ generates ${\cal V}^{[k]} \cap {\rm span}_K(E)$. 
\end{lemma}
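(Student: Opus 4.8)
The plan is to reduce the statement to a routine fact about coordinate subspaces: after identifying elements of $R$ with their coordinate vectors in the monomial basis $E^{[k]}$, the matrix $M_k$ will turn out to describe the intersection of the row space of $M(V^{[k]},E^{[k]})$ with the coordinate subspace spanned by $E$. First I would record the bookkeeping needed for the statement to make sense. Since ${\cal V}\subseteq{\rm span}_K(E)$ by construction, applying the plus operator $k$ times gives ${\cal V}^{[k]}\subseteq{\rm span}_K(E)^{[k]}={\rm span}_K(E^{[k]})$, so $M(V^{[k]},E^{[k]})$ is defined; moreover $E\subseteq E^{[k]}$ as sets of monomials, and both $E$ and $E^{[k]}$ are sets of distinct monomials $\sigma_s^a\sigma_\nu^b$, hence $K$-linearly independent in $R$. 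Under the identification ${\rm span}_K(E^{[k]})\simeq K^{E^{[k]}}$ furnished by the monomial basis, the row space of $M(V^{[k]},E^{[k]})$ is exactly ${\cal V}^{[k]}$, and an element of ${\rm span}_K(E^{[k]})$ lies in ${\rm span}_K(E)$ if and only if its coordinates indexed by $E^{[k]}\setminus E$ all vanish.

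Next I would unwind the left-nullspace step. Write $P_1,\dots,P_N$ for the rows of $M(V^{[k]},E^{[k]})$, that is, for the generators listed in $V^{[k]}$. Each row $\lambda$ of $L_k$, which by definition form a basis of the left nullspace of $M(V^{[k]},E^{[k]})_{E^{[k]}\setminus E}$, produces an element $P_\lambda=\sum_j\lambda_jP_j\in{\cal V}^{[k]}$ all of whose coordinates on $E^{[k]}\setminus E$ vanish, hence $P_\lambda\in{\cal V}^{[k]}\cap{\rm span}_K(E)$. Setting $W^{[k]}=\{P_\lambda : \lambda \text{ a row of } L_k\}$, I would then check that $W^{[k]}$ generates the intersection: any $g\in{\cal V}^{[k]}\cap{\rm span}_K(E)$ can be written $g=\sum_jc_jP_j$ because $V^{[k]}$ generates ${\cal V}^{[k]}$, and the condition $g\in{\rm span}_K(E)$ forces $c\cdot M(V^{[k]},E^{[k]})_{E^{[k]}\setminus E}=0$, so $c$ is a $K$-combination of the rows of $L_k$ and $g$ is the corresponding $K$-combination of the $P_\lambda$. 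Finally, since the discarded coordinates of each $P_\lambda$ are zero, the coordinate vector of $P_\lambda$ in the basis $E$ is the restriction to the columns indexed by $E$ of $\sum_j\lambda_j(\text{row } j \text{ of } M(V^{[k]},E^{[k]}))$; stacking these rows gives $L_k\cdot M(V^{[k]},E^{[k]})_E=M_k$, which is precisely $M(W^{[k]},E)$.

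I do not expect a genuine obstacle here; this is elementary linear algebra once the bases are pinned down. The only points demanding care are (i) that $V^{[k]}$ is merely a generating set, so $M(V^{[k]},E^{[k]})$ need not have independent rows — but every step uses only its row space, which is unaffected — and (ii) checking that the notation $M(\cdot,E)$ legitimately applies to $W^{[k]}$, i.e.\ that $E$ is $K$-linearly independent and $W^{[k]}\subseteq{\rm span}_K(E)$, both of which are established in the first two steps.
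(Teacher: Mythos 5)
Your proof is correct, and since the paper provides no explicit proof for this lemma (it is introduced as "the following easy lemma" with no \verb|proof| environment), your careful unwinding of the definitions supplies exactly the routine linear-algebra argument the authors intend. The two points you flag at the end — that only the row space of $M(V^{[k]},E^{[k]})$ matters, not row independence, and that $E$ is a linearly independent monomial set containing $W^{[k]}$ so the notation $M(W^{[k]},E)$ is legitimate — are precisely the bookkeeping details that make the "easy" claim airtight.
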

Checking if $k = k^*$ amounts to checking that ${\rm rank}\, M_k = |E \setminus B|$. One then replaces $M_k$ by $|E \setminus B|$ of its rows which are linearly independent, and reads off the contiguity relations \eqref{eq:expansionmodJ} from the rows of $(M_k)_{E \setminus B}^{-1} \cdot M_k$.
The algorithm suggested by this discussion has the advantage that it is easy to explain and implement, but it has the disadvantage that it is not very efficient: the size of the set $E^{[k]}$  increases rapidly with $k$. In the rest of the section, we present an improvement which deals with smaller matrices. This will result in Algorithm \ref{alg:better_alg}. 

Let ${\cal V} \subset {\rm span}_K(E)$ be as above and fix a positive integer $k$. We define a sequence ${\cal V} = {\cal V}_{k,0} \subset {\cal V}_{k,1} \subset {\cal V}_{k,2} \subset \cdots$ of subspaces of ${\rm span}_K(E)$ defined recursively as 
\[ {\cal V}_{k,q} \, = \, {\cal V}_{k,q-1}^{[k]} \, \cap \, {\rm span}_K(E). \]
This chain stabilizes at finite $q = q^*$, and ${\cal V}^{[k]} \cap {\rm span}_K(E) \subset {\cal V}_{k,q^*} \subset J \cap {\rm span}_K(E)$. The first inclusion is usually strict, i.e.~${\cal V}^{[k]} \cap {\rm span}_K(E) \subsetneq {\cal V}_{k,q^*}$. In fact, and most importantly, we often have ${\cal V}_{k,q^*} = J \cap {\rm span}_K(E)$ for $k < k^*$ (recall that $k^*$ is the smallest $k$ such that ${\cal V}^{[k]} \cap {\rm span}_K(E) = J \cap {\rm span}_K(E)$). 
It is computationally much less expensive to increase $q$ than to increase $k$: ${\cal V}_{k,q}$ can be computed using matrices of the form $M( - , E^{[k]})$, for any $q$. Hence, this gives us a way to compute $J \cap {\rm span}_K(E)$ by working with smaller matrices.

We present the details of computing ${\cal V}_{k,q^*}$ for fixed $k$. We do this by computing a matrix $M_{k,q^*} = M(V_{k,q^*},E)$ where $V_{k,q^*}$ is a set of generators of ${\cal V}_{k,q^*}$. This is done recursively, starting from $M(V_{k,0},E)$, where ${\cal V}_{k,0} = {\cal V}$. Having computed $V_{k,q-1}$, we proceed by constructing $M(V_{k,q-1}^{[k]}, E^{[k]})$, where $V_{k,q-1}^{[k]}$ is a set of generators for ${\cal V}_{k,q-1}^{[k]}$ (which is easily computed from $V_{k,q-1}$). Similar to what we did 
in \eqref{eq:M_k}, we intersect with ${\rm span}_K(E)$ by computing the cokernel matrix $L_{k,q}$ of $M(V_{k,q-1}^{[k]}, E^{[k]})_{E^{[k]} \setminus E}$, and then setting 
\[ M_{k,q} \, = \, M(V_{k,q},E) \, =  \, L_{k,q} \cdot M(V_{k,q-1}^{[k]}, E^{[k]})_{E}. \]
The stopping criterion for the iteration is that $q = q^*$ if ${\rm rank} \, M_{k,q+1} = {\rm rank} \, M_{k,q}$. If ${\rm rank} \, M_{k,q^*} = {\rm dim}_K {\cal V}_{k,q^*} < |E \setminus B|$, we increase $k$ and repeat.
This is Algorithm \ref{alg:better_alg}.

\begin{algorithm}
\caption{Compute contiguity matrices with respect to a basis $B$}\label{alg:better_alg}
\begin{algorithmic}
\State \textbf{Input}: $B$, the generators $V$ of ${\cal V}$ from \eqref{eqn:2}
\State \textbf{Output}: the contiguity matrices $C_{s_1}, \ldots, C_{s_\ell}, C_{\nu_1}, \ldots, C_{\nu_n}$
\State $E \gets $ the set of monomials $\sigma_s^a \sigma_\nu^b$ from \eqref{eq:defE}
\State $k \gets 0, r \gets 0$
\While{$r < |E \setminus B|$}
\State $k \gets k + 1, \, q \gets 0, r \gets 0$
\State $M_{k,q} \gets M(V,E) = M(V_{k,0},E)$
\While{ $r< {\rm rank} \, M_{k,q} <|E \setminus B|$}
\State $r \gets {\rm rank} \, M_{k,q}$
\State $q \gets q+1$
\State $L_{k,q} \gets$ cokernel matrix of $M(V_{k,q-1}^{[k]},E^{[k]})_{E^{[k]} \setminus E}$
\State $M_{k,q} \gets L_{k,q} \cdot M(V_{k,q-1}^{[k]},E^{[k]})_E = M(V_{k,q},E)$
\EndWhile
\EndWhile
\State $M_{k,q} \gets $ submatrix of $M_{k,q}$ consisting of ${\rm rank} \, M_{k,q}$ linearly independent rows
\State $M_{k,q} \gets (M_{k,q})_{E \setminus B}^{-1} \cdot M_{k,q}$
\For{$\alpha \in \{s_1, \ldots, s_\ell, \nu_1, \ldots, \nu_n\}$}
\State Construct $C_{\alpha}$ by reading the contiguity relations for $\sigma_\alpha \bullet B$ from the rows of $M_{k,q}$
\EndFor
\State \textbf{return} $C_{s_1}, \ldots, C_{s_\ell}, C_{\nu_1}, \ldots, C_{\nu_n}$
\end{algorithmic}
\end{algorithm}

\begin{example} \label{ex:algcubic}
For the data in Example \ref{exa:cubic}, we have $B = \{ 1, \sigma_\nu, \sigma_\nu^2 \}$ and Equation \eqref{eq:defE} gives $E=\{\s_{\nu}^3,\s_{s}\s_{\nu}^2,\s_s\s_{\nu},\s_s,\s_s\s_{\nu}^3 ,1,\s_{\nu},\s_{\nu}^2\}$. For $k = 1$, we find that $q^* = 2$ and $\mathcal{V}_{1,2}\cap {\rm span}_K(E)=J\cap{\rm span}_K(E)$ is represented by the row span of the following $5\times 8$ matrix $M_{1,2} = M(V_{1,2},E)$:
\renewcommand{\kbldelim}{(}
\renewcommand{\kbrdelim}{)}
\[
  M_{1,2}
  = \kbordermatrix{
 &\s_{\nu}^3&\s_{s}\s_{\nu}^2&\s_s\s_{\nu}&\s_s&\s_s\s_{\nu}^3 &1&\s_{\nu}&\s_{\nu}^2\\
&\frac{3s-\nu-3}{\nu}& 0& 0& 0& 0& 1& 0& 0\\
&0& 3s& 0& 0& 0& 0& 0& \nu-3s+2\\
&0& 0& 1& 0& 0& 0& \frac{\nu-3s+1}{3s}& 0\\
&0& 0& 0& 1& \frac{-\nu+3s}{\nu}& 0& 0& 0\\
&0& 0& 0& 1& 0& \frac{\nu-3s}{3s}& 0& 0
  }.
\]
Note that ${\rm rank} \, M_{1,2} = 5 = |E \setminus B|$. The first row of $M_{1,2}$ reads $\nu^{-1}(3s-\nu-3)\s_{\nu}^3+1\in J$. By inverting the leftmost $5\times5$ submatrix, we compute $C_\nu$ in Example \ref{ex:61}. It equals \eqref{eq:multiplication} transposed with $\d = 1$. For $q = 1$, we have ${\rm rank} \, M_{1,1} = 4$. Here $M_{1,1} = M_1$ from Lemma \ref{lem:Mk}. The matrices used to compute $M_{1,2}$ have $|E^+| = |E^{[1]}| = 20$ columns. The naive algorithm implied by Lemma \ref{lem:Mk} requires to compute $M_{2} = M_{2,1}$, using matrices of size $|E^{[2]}| = 36$.
\end{example}

\section{Computational examples} \label{sec:6}
We have implemented Algorithms \ref{alg:basis} and \ref{alg:better_alg} in \texttt{Julia} (v1.8.3). Our code is available at  \url{https://mathrepo.mis.mpg.de/TwistedCohomology}. The numerical solution of $\omega(x)=0$ in Algorithm \ref{alg:basis} relies on the package \texttt{HomotopyContinuation.jl} (v2.6.4) \cite{breiding2018homotopycontinuation} . The symbolic computations in Algoritm \ref{alg:better_alg} are done using \texttt{Oscar.jl} (v0.10.0) \cite{OSCAR}, and they require that $f_i$ have rational coefficients. We tested our implementation for several low-dimensional very affine varieties $X$. This section describes these varieties, and the results. The output of Algorithm \ref{alg:better_alg} consists of $n + \ell$ matrices of size $\chi \times \chi$. Most often, the size of the rational functions in their entries prohibits us from including this output in the paper. All output is available in the form of \texttt{.txt} files at  \url{https://mathrepo.mis.mpg.de/TwistedCohomology}. We used a 16 GB MacBook Pro with an Intel Core i7 processor working at 2.6 GHz.

\begin{example}[Third roots of unity] \label{ex:61}
    Let $n = 1, \ell = 1$ and let $f = 1 - x^3$ as in Examples \ref{exa:cubic}, \ref{ex:lefschetz} and \ref{ex:algcubic}. We keep using the basis $B = \{ 1, \sigma_\nu, \sigma_\nu^2 \}$. As mentioned in Example \ref{ex:algcubic}, we can work with $k = 1$, for which $q^* = 2$. The contiguity matrices are 
    \[ C_\nu \, = \, \begin{pmatrix}
    0 &1 & 0\\ 0& 0& 1\\ \frac{\nu}{\nu-3s+3} & 0 &  0
    \end{pmatrix}, \quad C_s \, = \, \begin{pmatrix}
   \frac{- \nu +3s}{3s} & 0 &  0\\ 0 & \frac{-\nu +3s - 1}{3s} & 0 \\ 0 &  0  & \frac{- \nu +3s - 2}{3s}
    \end{pmatrix}.
    \]
    This can be computed very fast. The same computation for $f = 1- x^{50}$, using $B = \{1, \sigma_\nu, \sigma_\nu^2, \ldots, \sigma_\nu^{49} \}$ takes about half a minute ($k = 1, q^* = 25 $). The reason for this efficiency is that the rational functions in the contiguity matrices are simple. When adding more terms to $f$, the computation time increases. Optimizing our implementation is left as future~work.
\end{example}

\begin{example}[Five points on the line]
We continue Example \ref{ex:M05}, where $X = {\cal M}_{0,5}$. This space and the associated generalized Euler integrals appear in physics in the context of five point string amplitudes, see \cite[Equation (4.7)]{arkani2021stringy} and \cite[Appendix A]{mizera2018scattering}. In the basis $B = \{ 1, \sigma_{\nu_1} \}$, we need $k = 1$ and $q^* = 2$ to compute the contiguity relations. We find that
\[ C_{\nu_1} \, = \, \begin{pmatrix}
    0 & 1 \\ r_1 & r_2
\end{pmatrix}, \quad C_{\nu_2} \, = \, \begin{pmatrix} \frac{\nu_1+\nu_2-s_3}{\nu_2 - s_2 - s_3 + 1} & \frac{-\nu_1 + s_1 + s_3 - 1}{\nu_2 - s_2 - s_3 + 1} \\ r_3 & r_4,\end{pmatrix}\]
where $r_1,r_2,r_3,r_4$ are the rational functions
\small
\begin{align*}
    r_1 &\,= \,\frac{\nu_1(-\nu_1 - \nu_2  + s_3)}{(\nu_1 - s_1 - s_3 + 2)  (\nu_1 + \nu_2 - s_1 - s_2 - s_3 + 2)}, \\
r_2 &\,= \, \frac{\nu_1(2 \nu_1 + 2 \nu_2 - 2  s_1 - s_2 - 3 s_3 + 4 )- \nu_2 (s_1 +  s_3-2) + s_3(s_1 + s_2  + s_3 - 3) - s_1 - s_2  + 2}{(\nu_1 - s_1 - s_3 + 2)  (\nu_1 + \nu_2 - s_1 - s_2 - s_3 + 2)}, \\
    r_3 &\,= \,\frac{\nu_1(\nu_1 + \nu_2  - s_3)}{(\nu_2 - s_2 - s_3 + 1)  (\nu_1 + \nu_2 - s_1 - s_2 - s_3 + 2)}, \\
 r_4 &\,= \,\frac{\nu_1(-\nu_1 + s_1 + s_3 -1) + \nu_2( \nu_2 - s_2 - s_3 + 1)}{(\nu_2 - s_2 - s_3 + 1)  (\nu_1 + \nu_2 - s_1 - s_2 - s_3 + 2)}.
\end{align*} 
\normalsize
The algorithm also returns the matrices $C_{s_1}, C_{s_2}, C_{s_3}$, whose entries are slightly more complicated. While this runs in less than a second, the same computation for the three-dimensional moduli space ${\cal M}_{0,6}$ (with Euler characteristic $-6$) does not terminate within reasonable time. The parameters are $n = 3, \ell = 6$ and the $f_i$ are the bottom two rows of \cite[Equation (6)]{sturmfels2021likelihood}. This is a nice computational challenge for future improvements of Algorithm \ref{alg:better_alg}. 
\end{example}

\begin{example}[$k \neq 1$]
    We set $n = 2,\, \ell = 1$ and consider $f = 1 + x^2 + y^3 + x^2y^3$. The Euler characteristic is $6$. Algorithm \ref{alg:basis} selects $B = \{ 1, \sigma_{\nu_2}, \sigma_{\nu_2}^2, \sigma_{\nu_1}, \sigma_{\nu_1} \sigma_{\nu_2}, \sigma_{\nu_1} \sigma_{\nu_2}^2 \}$ among all monomials of degree at most three. In this example, for $k = 1$, we have $q^* = 1$ and ${\rm rank} \, M_{1,1} = 4 < |E \setminus B| = 14$. It suffices to increase $k$ by 1: for $k =2$, we find $q^* = 2$ and ${\rm rank} \, M_{2,2} = 14$. The computation takes less than three seconds in total.  
\end{example}

\begin{example}[Fermat hypersurfaces]
This example uses $n = 2, 3, 4$, $\ell = 1$ and $f = x_1^d + \cdots + x_n^d - 1$, for $d \geq 1$. The very affine variety $X$ is the complement of a Fermat hypersurface in the $n$-dimensional torus. We have $\chi = d^n$ and use $B = \{ \sigma_{\nu_1}^{d_1} \cdots \sigma_{\nu_n}^{d_n} \, : \, 0 \leq d_i \leq d-1 \}$. For $n = 2$, we compute contiguity matrices for the Fermat curve of degree $d=10$ within less than five minutes. The matrices are obtained from $M_{k,q^*}$ with $k = 1, q^* = 11$. For surfaces ($n=3$), the computation for $d = 4$ runs in about five minutes, and the result is obtained for $k = 1, q^* = 8$. For $n = 4,k = 1, q^* = 4$, it takes about 30 seconds to compute the five contiguity matrices for the quadratic threefold $x_1^2 + x_2^2 + x_3^2 + x_4^2 -1 = 0$.
\end{example}

\begin{example}[Feynman integrals]
Feynman integrals from physics are of the form \eqref{eq:per}. They are associated to a graph $G$, called \emph{Feynman diagram}, which encodes particle interaction patterns. In this context $\ell = 1$, and the polynomial $f$ in the likelihood function $L = f^s x^\nu$ is the \emph{graph polynomial} associated to $G$. The graph polynomial is the sum of the first and second \emph{Symanzik polynomials} of $G$. The number of variables is the number of \emph{internal edges} $n$. Details are in \cite{BBKP,lee2013critical,mizera2022landau}. Here, we apply our algorithm for two different graphs. Both are examples of \emph{one-loop diagrams} \cite[Section 2.5]{mizera2022landau}. They are shown in Figure \ref{fig:feynman}.  The first one is called the \emph{bubble diagram}. The very affine variety is the complement~of 
\[f_{\rm bubble} \, = \, c_{11} x_1^2 + c_{12} x_1x_2 + c_{22} x_2^2 + x_1 + x_2 \, = \, 0 \]
in $(\mathbb{C}^*)^2$, where the $c_{ij}$ depend on masses and momenta.  Here $n = 2$, and the internal edges are those labeled $m_1$, $m_2$. We arbitrarily chose $(c_{11}, c_{12}, c_{22}) = (7,12,3)$. The Euler characteristic is $\chi(X) = 3$. With basis $B = \{ 1, \sigma_{\nu_1}, \sigma_{\nu_2} \}$, the contiguity matrices are found for $k = 1, q^* = 2$. The next diagram is the \emph{triangle diagram} with massless internal particles: 
\[ f_{\rm triangle} \, = \, c_{12} x_1 x_2 + c_{13} x_1 x_3 + c_{23} x_2 x_3 + x_1 + x_2 + x_3. \]
The very affine variety $X$ is a threefold, i.e.~$n = 3$. We set $(c_{12}, c_{13}, c_{23}) = (2,-6,-8)$. The Euler characteristic is $-4$. Our algorithm computes the $n + \ell = 4$ contiguity matrices of size $4 \times 4$ within less than a second. We used $B = \{ 1, \sigma_{\nu_1}, \sigma_{\nu_2}, \sigma_{\nu_3} \}$, and $k = 1, q^* = 2$. 
\begin{figure}
\centering
\includegraphics[height = 2.5cm]{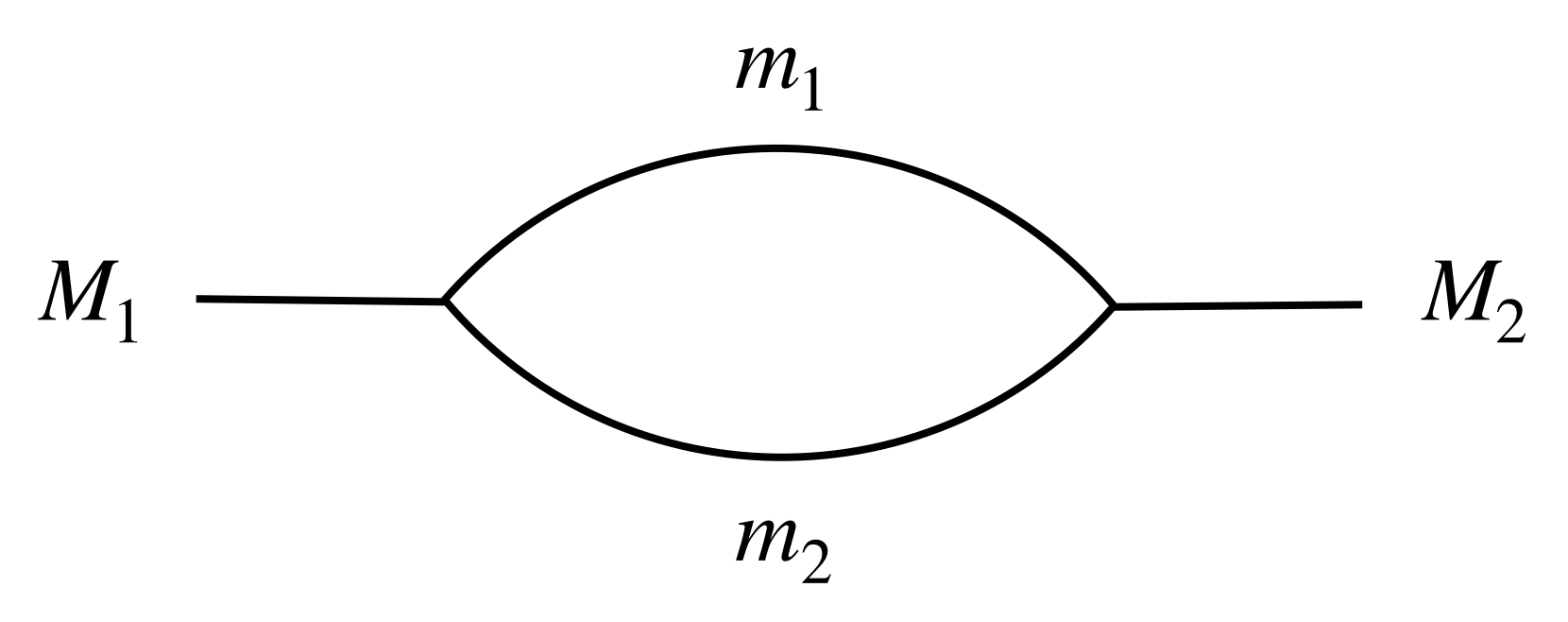} \quad \quad
\includegraphics[height = 3.3cm]{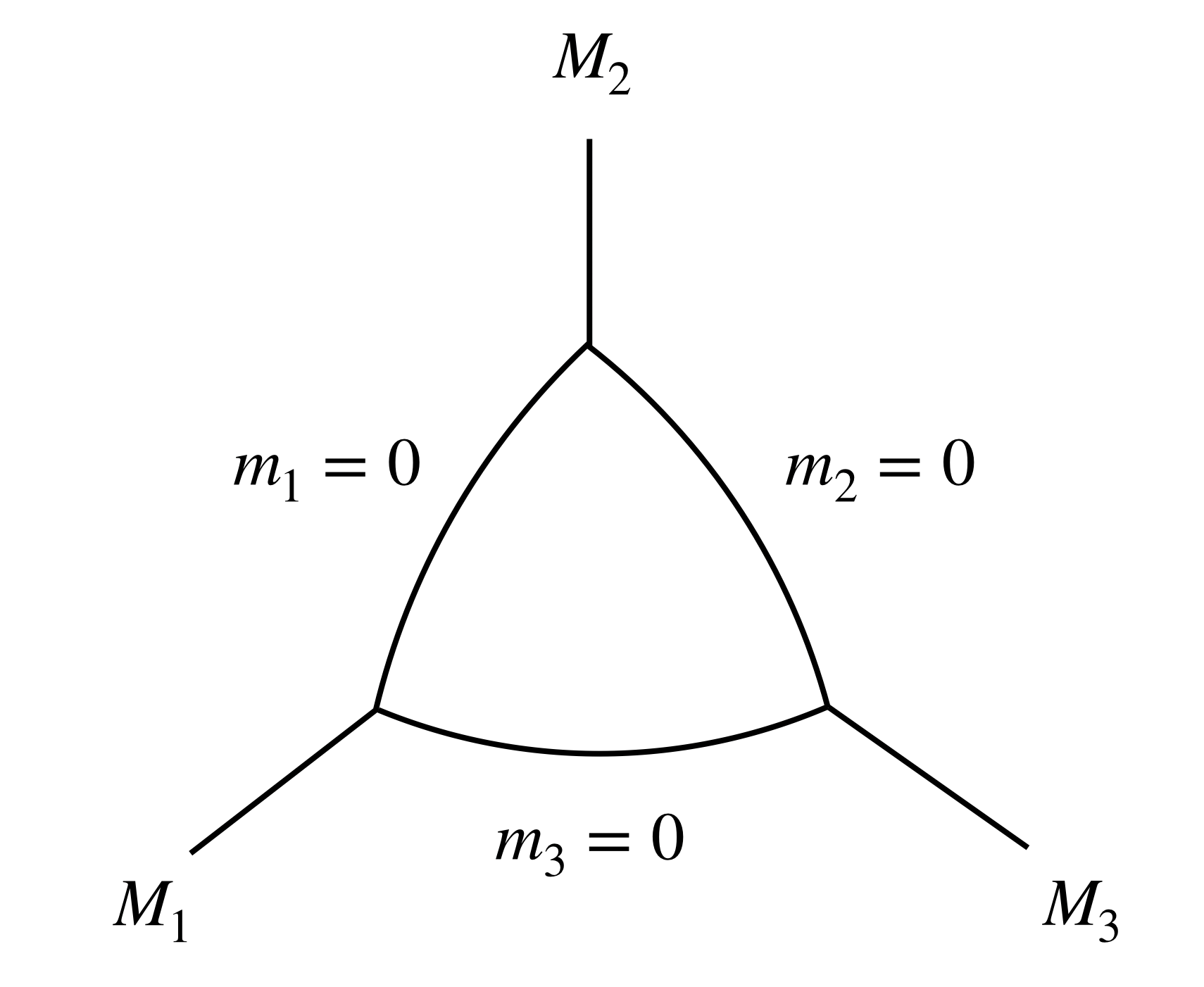}
\caption{Massive bubble diagram and triangle diagram with zero internal masses. }
\label{fig:feynman}
\end{figure}
\end{example}

\section*{Acknowledgements}
Saiei-Jaeyeong Matsubara-Heo was supported
by JSPS KAKENHI Grant Number 19K14554 and 22K13930, and partially supported by JST CREST Grant Number JP19209317. Simon Telen was supported by a Veni grant from the Netherlands Organisation for Scientific Research (NWO).
The first author is grateful to the nonlinear algebra group at MPI MiS Leipzig for its hospitality. Discussions during his visit led to the Appendix of \cite{agostini2022vector} and to this work. We thank Bernd Sturmfels for his helpful comments to an earlier version of this paper. 
\bibliographystyle{abbrv}
\bibliography{references.bib}

\begin{thebibliography}{10}

\bibitem{agostini2022vector}
D.~Agostini, C.~Fevola, A.-L. Sattelberger, and S.~Telen.
\newblock Vector spaces of generalized {E}uler integrals.
\newblock {\em arXiv preprint arXiv:2208.08967}, 2022.

\bibitem{aomoto2011theory}
K.~Aomoto, M.~Kita, T.~Kohno, and K.~Iohara.
\newblock {\em Theory of hypergeometric functions}.
\newblock Springer, 2011.

\bibitem{arkani2021stringy}
N.~Arkani-Hamed, S.~He, and T.~Lam.
\newblock Stringy canonical forms.
\newblock {\em Journal of High Energy Physics}, 2021(2):1--62, 2021.

\bibitem{BBKP}
T.~Bitoun, C.~Bogner, R.~P. Klausen, and E.~Panzer.
\newblock Feynman integral relations from parametric annihilators.
\newblock {\em Lett. Math. Phys.}, 109(3):497--564, 2019.

\bibitem{Borel}
A.~Borel, P.-P. Grivel, B.~Kaup, A.~Haefliger, B.~Malgrange, and F.~Ehlers.
\newblock {\em Algebraic {$D$}-modules}, volume~2 of {\em Perspectives in Mathematics}.
\newblock Academic Press, Inc., Boston, MA, 1987.

\bibitem{breiding2018homotopycontinuation}
P.~Breiding and S.~Timme.
\newblock {H}omotopy{C}ontinuation.jl: A package for homotopy continuation in {J}ulia.
\newblock In {\em International Congress on Mathematical Software}, pages 458--465. Springer, 2018.

\bibitem{esnault1992lectures}
H.~Esnault and E.~Viehweg.
\newblock {\em Lectures on vanishing theorems}, volume~20.
\newblock Springer, 1992.

\bibitem{GKZ}
I.~M. Gel'fand, M.~M. Kapranov, and A.~V. Zelevinsky.
\newblock Generalized {E}uler integrals and {A}-hypergeometric functions.
\newblock {\em Advances in Mathematics}, 84(2):255--271, 1990.

\bibitem{EGA}
A.~Grothendieck.
\newblock \'{E}l\'{e}ments de g\'{e}om\'{e}trie alg\'{e}brique. {III}. \'{E}tude cohomologique des faisceaux coh\'{e}rents. {I}.
\newblock {\em Inst. Hautes \'{E}tudes Sci. Publ. Math.}, (11):167, 1961.

\bibitem{guillemin1990geometric}
V.~Guillemin and S.~Sternberg.
\newblock {\em Geometric asymptotics}.
\newblock Number~14. American Mathematical Soc., 1990.

\bibitem{Hartshorne}
R.~Hartshorne.
\newblock {\em Algebraic geometry}, volume~52.
\newblock Springer Science \& Business Media, 2013.

\bibitem{henn2013multiloop}
J.~M. Henn.
\newblock Multiloop integrals in dimensional regularization made simple.
\newblock {\em Physical review letters}, 110(25):251601, 2013.

\bibitem{huh2013maximum}
J.~Huh.
\newblock The maximum likelihood degree of a very affine variety.
\newblock {\em Compositio Mathematica}, 149(8):1245--1266, 2013.

\bibitem{huh2014likelihood}
J.~Huh and B.~Sturmfels.
\newblock Likelihood geometry.
\newblock {\em Combinatorial algebraic geometry}, 2108:63--117, 2014.

\bibitem{laporta2000high}
S.~Laporta.
\newblock High-precision calculation of multiloop {F}eynman integrals by difference equations.
\newblock {\em International Journal of Modern Physics A}, 15(32):5087--5159, 2000.

\bibitem{lee2013critical}
R.~N. Lee and A.~A. Pomeransky.
\newblock Critical points and number of master integrals.
\newblock {\em Journal of High Energy Physics}, 2013(11):1--17, 2013.

\bibitem{loeser1991equations}
F.~Loeser and C.~Sabbah.
\newblock Equations aux differences finies et determinants d'integrales de fonctions multiformes.
\newblock {\em Commentarii mathematici Helvetici}, 66(1):458--503, 1991.

\bibitem{matsubara2020euler}
S.-J. Matsubara-Heo.
\newblock Euler and {L}aplace integral representations of {GKZ} hypergeometric functions {I}.
\newblock {\em Proceedings of the Japan Academy, Series A, Mathematical Sciences}, 96(9):75--78, 2020.

\bibitem{matsubara2022localization}
S.-J. Matsubara-Heo.
\newblock Localization formulas of cohomology intersection numbers.
\newblock {\em Journal of the Mathematical Society of Japan}, 1(1):1--32, 2022.

\bibitem{Matsumura}
H.~Matsumura.
\newblock {\em Commutative ring theory}, volume~8 of {\em Cambridge Studies in Advanced Mathematics}.
\newblock Cambridge University Press, Cambridge, second edition, 1989.
\newblock Translated from the Japanese by M. Reid.

\bibitem{mizera2018scattering}
S.~Mizera.
\newblock Scattering amplitudes from intersection theory.
\newblock {\em Physical Review Letters}, 120(14):141602, 2018.

\bibitem{mizera2022landau}
S.~Mizera and S.~Telen.
\newblock Landau discriminants.
\newblock {\em Journal of High Energy Physics}, 2022(8):1--57, 2022.

\bibitem{mora1986grobner}
F.~Mora.
\newblock Gr{\"o}bner bases for non-commutative polynomial rings.
\newblock In {\em Algebraic Algorithms and Error-Correcting Codes: 3rd International Conference, AAECC-3 Grenoble, France, July 15--19, 1985 Proceedings 3}, pages 353--362. Springer, 1986.

\bibitem{mora1994introduction}
T.~Mora.
\newblock An introduction to commutative and noncommutative {G}r{\"o}bner bases.
\newblock {\em Theoretical Computer Science}, 134(1):131--173, 1994.

\bibitem{mourrain1999new}
B.~Mourrain.
\newblock A new criterion for normal form algorithms.
\newblock In {\em International Symposium on Applied Algebra, Algebraic Algorithms, and Error-Correcting Codes}, pages 430--442. Springer, 1999.

\bibitem{OSCAR}
Oscar -- open source computer algebra research system, version 0.10.0, 2022.

\bibitem{SST}
M.~Saito, B.~Sturmfels, and N.~Takayama.
\newblock {\em Gr\"{o}bner deformations of hypergeometric differential equations}, volume~6 of {\em Algorithms and Computation in Mathematics}.
\newblock Springer-Verlag, Berlin, 2000.

\bibitem{sattelberger2019d}
A.-L. Sattelberger and B.~Sturmfels.
\newblock D-modules and holonomic functions.
\newblock {\em arXiv preprint arXiv:1910.01395}, 2019.

\bibitem{sturmfels2021likelihood}
B.~Sturmfels and S.~Telen.
\newblock Likelihood equations and scattering amplitudes.
\newblock {\em Algebraic Statistics}, 12(2):167--186, 2021.

\bibitem{telen2020thesis}
S.~Telen.
\newblock {\em Solving Systems of Polynomial Equations}.
\newblock PhD thesis, {KU} {L}euven, {L}euven, {B}elgium, 2020.
\newblock Available at \url{https://simontelen.webnode.page/publications/}.

\bibitem{witten2011analytic}
E.~Witten.
\newblock Analytic continuation of {C}hern-{S}imons theory.
\newblock {\em AMS/IP Stud. Adv. Math}, 50:347, 2011.

\end{thebibliography}

\noindent{\bf Authors' addresses:}
\medskip

\noindent Saiei-Jaeyeong Matsubara-Heo, Kumamoto University \hfill{\tt saiei@educ.kumamoto-u.ac.jp}

\noindent Simon Telen, MPI-MiS Leipzig and CWI Amsterdam {\em (current)}
\hfill {\tt simon.telen@mis.mpg.de}

\end{document}